\PassOptionsToPackage{dvipsnames}{xcolor} 
\documentclass[preprint,11pt]{article}
\usepackage{geometry}
\usepackage{graphicx}
\usepackage[utf8]{inputenc}
\usepackage{multirow} 
\usepackage{amssymb,amsmath} 
\usepackage{tikz}
\usepackage{amsthm}
\usepackage{authblk}

\usetikzlibrary{shapes,arrows,backgrounds,fit,positioning}
\usetikzlibrary{chains, decorations.pathreplacing, positioning}
\usetikzlibrary{calc}
\usetikzlibrary{arrows.meta}
\usetikzlibrary{shapes,snakes}
\usetikzlibrary{decorations.pathmorphing}
\usetikzlibrary{decorations.markings}
\usetikzlibrary{calc}
\usetikzlibrary{backgrounds}
\usetikzlibrary{shadows}

\newtheorem{proposition}{Proposition}

\newtheorem{lemma}{Lemma}
\newtheorem{theorem}{Theorem}
\newtheorem{definition}{Definition}
\newtheorem{remark}{Remark}
\newtheorem{example}{Example}


\newcommand{\bool}{\{0,1\}}
\newcommand{\M}{\boldsymbol{M}}
\newcommand{\MU}{\boldsymbol{\mu}}
\newcommand{\LAMBDA}{\boldsymbol{\lambda}}
\newcommand{\GLAMBDA}{\boldsymbol{\Lambda}}
\newcommand{\ELL}{\boldsymbol{\ell}}
\newcommand{\C}{\boldsymbol{c}}

\newcommand{\glambda}{\Lambda}
\newcommand{\functions}{\mathrm{F}}
\newcommand{\xor}{{\oplus}}
\newcommand{\ie}{\emph{i.e.}}

\newcommand{\maxpf}{\phi^{\max}}
\newcommand{\minpf}{\phi^{\min}}


\newcommand{\KMaxFPP}{{\textsc{$k$-Maximum Fixed Point Problem}}}
\newcommand{\kmaxfpp}[1]{$#1$-{\textsc{MaxFPP}}}
\newcommand{\MaxFPP}{{\textsc{Maximum Fixed Point Problem}}}
\newcommand{\maxfpp}{{\textsc{MaxFPP}}}

\newcommand{\KMinFPP}{{\textsc{$k$-Minimum Fixed Point Problem}}}
\newcommand{\kminfpp}[1]{$#1$-{\textsc{MinFPP}}}
\newcommand{\MinFPP}{{\textsc{Minimum Fixed Point Problem}}}
\newcommand{\minfpp}{{\textsc{MinFPP}}}

\newcommand{\SSAT}{{\textsc{Succinct-3SAT}}}
\newcommand{\SthreeSAT}{{\textsc{Succinct-3SAT}}}

\newcommand{\sharpSAT}{{\textsc{\#SAT}}}

\newcommand{\EMSAT}{{\textsc{Existential-Majority-SAT}}}
\newcommand{\emsat}{{\textsc{E-MajSAT}}}
\newcommand{\MAJSAT}{{\textsc{Majority-SAT}}}

\newcommand{\QSAT}{\textsc{Quantified satisfiability} with 2 alternating quantifiers}
\newcommand{\qsat}{\textsc{QSAT$_2$}}


\newcommand{\NEXPTIME}{\mathrm{\sf NEXPTIME}}
\newcommand{\Poly}{\mathrm{\sf P}}
\newcommand{\PP}{\mathrm{\sf PP}}
\newcommand{\SPoly}{\mathrm{\sf \#P}}
\newcommand{\NP}{\mathrm{\sf NP}}
\newcommand{\coNP}{\mathrm{\sf coNP}}
\newcommand{\NPoSPoly}{\NP^{\SPoly}}
\newcommand{\NPoPP}{\NP^{\PP}}
\newcommand{\PolyoSPoly}{\Poly^{\SPoly}}
\newcommand{\PolyoPP}{\Poly^{\PP}}
\newcommand{\NPoNP}{\NP^{\NP}}

\newcommand{\an}{a}
\newcommand{\simple}{nice}
\newcommand{\Null}{zero}

\newcommand{\decisionpb}[3]{\fbox{\parbox{0.87\textwidth}{{#1}\\{\it Input:} #2\\{\it Question:} #3}}}


\title{
  Complexity of fixed point counting problems\\
  in Boolean Networks\thanks{This work extends some early results
  announced in \cite{bdpr19}.}
}

\author[1]{Florian Bridoux}
\author[1]{Amélia Durbec}
\author[1,2]{Kevin Perrot}
\author[2,3]{Adrien Richard}
\affil[1]{Aix-Marseille Universit\'e, Universit\'e de Toulon, CNRS, LIS, Marseille, France}
\affil[2]{Universit\'e Côte d’Azur, CNRS, I3S, Sophia Antipolis, France}
\affil[3]{Universidad de Chile, CMM, Santiago, Chile}

\date{\today}

\begin{document}

\maketitle
	
\begin{abstract}
A {\em Boolean network} (BN) with $n$ components is a discrete dynamical system described by the successive iterations of a function $f:\bool^n \to \bool^n$. This model finds applications in biology, where fixed points play a central role. For example, in genetic regulations, they correspond to cell phenotypes. In this  context, experiments reveal the existence of positive or  negative influences among components. The digraph of influences is called {\em signed  interaction digraph} (SID), and one SID may correspond to a large number of BNs. The present work opens a new perspective on the well-established study of fixed points in BNs. When biologists discover the SID of a BN they do not know, they may ask: given that SID, can it correspond to a BN having at least/at most $k$ fixed points? Depending on the input, we prove that these problems are in $\Poly$ or complete for $\NP$, $\NPoNP$, $\NPoSPoly$ or $\NEXPTIME$.
	
\end{abstract}

\section{Introduction}\label{section:introduction}

A {\em Boolean network} (BN) with $n$ components is a discrete dynamical system described by the successive iterations of a function
\[
f:\bool^n\to\bool^n,\qquad x=(x_1,\dots,x_n)\mapsto 
f(x)=(f_1(x),\dots,f_n(x)).
\]
The structure of the network is often described by a signed digraph $D$, called {\em signed interaction digraph} (SID) of $f$, catching effective positive and negative dependencies among components: the vertex set is $[n]=\{1,\dots,n\}$ and, for all $i,j\in [n]$, there is a positive (resp. negative, \Null{}) arc from $j$ to $i$ if $f_i(x)$ depends essentially on $x_j$ and is an increasing (resp. decreasing, non-monotone) function of $x_j$. The SID provides a very rough information about~$f$ since, given \an{} SID $D$, the set $\functions(D)$ of BNs whose SID is $D$ is generally huge. 

\medskip
BNs have many applications. In particular, since the seminal papers of Kauffman \cite{K69,K93} and Thomas \cite{T73,TA90}, they are very classical models for the dynamics of gene networks. In this context, the first reliable experimental information often concern the SID of the network, while the actual dynamics are very difficult to observe \cite{TK01,N15}. One is thus faced with the following question: {\em What can be said about the dynamics described by $f$ according to $D$ only?}  

\medskip
Among the many dynamical properties that can be studied, fixed points are of special interest. For instance, in the context of gene networks, they correspond to stable patterns of gene expression at the basis of particular cellular phenotypes \cite{TA90,A04}. Furthermore, from a theoretical point of view, fixed points are among the very few properties invariant under the classical update schedules ({\em e.g} parallel, sequential, asynchronous) that we can use to define a dynamics from $f$ \cite{AGMS09}. As such, they are arguably the property which has been the most thoroughly studied. The number of fixed points and its maximization in particular is the subject of a stream of work, {\em e.g.} in \cite{R86,ADG04b,RRT08,A08,GR11,ARS14,GRR15,ARS17}. 

\medskip
From the computational complexity point of view, previous works essentially focused on decision problems of the following form: given $f$ and a dynamical property $P$, what is the complexity of deciding if the dynamics described by $f$ has the property $P$. For instance, it is well-known that deciding if $f$ has a fixed point is $\NP$-complete in general (see \cite{K08} and the references therein), and in $\Poly$ for some families of BNs, such as non-expansive BNs \cite{F92}. However, as mentioned above, in practice, $f$ is often unknown while its SID is well approximated. Hence, a more natural question is: given \an{} SID $D$ and a dynamical property $P$, what is the complexity of deciding if the dynamics described by some $f\in \functions(D)$ has the property $P$. To the best of our knowledge, there is, perhaps surprisingly, almost no work concerning this kind of decision problems; we can however cite \cite{dasgupta2007algorithmic} which, in the continuous setting, studies the complexity of an optimization problem based on the SID and dynamical properties related to monotonicity.

\medskip
In this paper, we study this class of decision problems, focusing on the maximum and minimum number of fixed points. More precisely, given \an{} SID $D$, we respectively denote by $\maxpf(D)$ and $\minpf(D)$ the maximum and minimum number of fixed points in a BN $f\in \functions(D)$, and we study the complexity of deciding if $\maxpf(D)\geq k$ or $\minpf(D) < k$.

\subsection{Organization of the paper and summary of contributions} 

After the preliminaries given in Section~\ref{section:definition}, we first study the problem of deciding if $\maxpf(D)\geq k$  when the positive integer $k$ is fixed. In Section~\ref{section:max_k=1} we prove that this problem is in $\Poly$ when $k=1$ and, in Section~\ref{section:max_k>=2}, we prove that it is $\NP$-complete when $k\geq 2$. Furthermore, these results remain true if the maximum in-degree $\Delta(D)$ is bounded by any constant $d\geq 2$. The case $k=2$ is of particular interest since many works have been devoted to finding necessary conditions for the existence of multiple fixed points, both in the discrete and continuous settings, see \cite{S06,RRT08,S06,KST07,R19} and the references therein. Section~\ref{section:max_k>=k} considers the case where $k$ is part of the input. We prove that, given \an{} SID $D$ and a positive integer $k$, deciding if $\maxpf(D)\geq k$ is $\NEXPTIME$-complete, and becomes $\NPoSPoly$-complete if $\Delta(D)$ is bounded by a constant $d\geq 2$. We then study the minimum number of fixed points, in Section~\ref{section:min}. We prove that, even for $k = 1$, deciding if $\minpf(D) < k$ is $\NEXPTIME$-complete. When $\Delta(D)$ is bounded by a constant $d\geq 2$, it becomes $\NPoNP$-complete if $k$ is a constant, and $\NPoSPoly$-complete if $k$ is a parameter of the problem. Finally, a conclusion and some perspectives are given in Section~\ref{sec:conclu}. 

\medskip
A summary of the results is given in Table~\ref{table:classe_complexite}. Note that, from them, we immediately obtain complexity results for the dual decision problems $\maxpf(D)<k$ and  $\minpf(D) \geq k$. 

\begin{table}[ht]
	\centerline{
		\setlength{\tabcolsep}{0.5em}
		\def\arraystretch{1.3}
		\begin{tabular}{|c|c|c|c|c|}
			\hline 
			Problem  & $\Delta(D) \leq d$ & $k = 1$ & $k \geq 2$ & $k$ given in input\\
			\hline 
			\multirow{2}{*}{$ \maxpf(D)  \geq k $}  
			& no & \multirow{2}{*}{$\Poly$} & \multirow{2}{*}{$\NP$-complete} & $\NEXPTIME$-complete\\
			\cline{2-2}
			\cline{5-5}
			& yes &  &  & $\NPoSPoly$-complete\\
			\cline{1-2}
			\cline{4-5}
			\hline 
			\multirow{2}{*}{$ \minpf(D)  < k $}  
			& no &  \multicolumn{3}{c|}{ $\NEXPTIME$-complete } \\
			\cline{2-5}
			& yes & \multicolumn{2}{c|}{\multirow{1}{*}{$\NPoNP$-complete}}
			& \multicolumn{1}{c|}{\multirow{1}{*}{$\NPoSPoly$-complete}}  \\
			\hline
		\end{tabular}
	}
	\label{table:classe_complexite}
\caption{Complexity results.}
\end{table}

\subsection{Synopsis of proof techniques} 

It is well known that the number of fixed points of a BN is strongly related to the cycles of its SID, see \cite{R19} for a review. Let us define the sign of a cycle as the product of the sign of its arcs. For instance, it is well known that $\maxpf(D)\leq 1$ (resp. $\minpf(D)\geq 1$) if all the cycles of $D$ are negative (resp. positive), and these inequalities are strict if $D$ is strongly connected. The first bound has the following well known generalization: $\maxpf(D)\leq 2^{\tau}$ if we can obtain a SID with only negative cycles by deleting $\tau$ vertices in $D$.  

\medskip
To prove that deciding if $\maxpf(D)\geq 1$ is in $\Poly$, we establish a kind of converse of the first bound. First, we prove that we can compute in polynomial time a SID $D'$, without zero sign, such that $\maxpf(D')\geq 1$ if and only if $\maxpf(D)\geq 1$. Hence, we can assume that $D$ has no zero sign. Under this assumption, we prove that $\maxpf(D)\geq 1$ if and only if each non-trivial initial strongly connected component of $D$ contains a positive cycle. By a deep result in graph theory, one can decide in polynomial time if a SID has a positive cycle, and it follows that deciding if $\maxpf(D)\geq 1$ is in $\Poly$. 

\medskip
All the other results are exact complexity results and, in each case, the main step is the lower bound, which is always done with a reduction from some variations of the SAT problem. The most important case is the reduction used to prove that it is $\NP$-hard to decide if $\maxpf(D)\geq 2$. The reduction is roughly as follows. Given a CNF formula $\psi$ over a set of $n$ variables  we construct in polynomial time a strongly connected SID $H_\psi$ with only positive arcs, with $2n$ particular vertices encoding the positive and negative literals, and with $m$ particular vertices $c_1,\dots,c_m$ corresponding to the clauses, such that every cycle of $D$ contains these $m$ vertices. By the results mentioned above, we have $2\geq \maxpf(H_\psi)\geq\minpf(H_\psi)\geq 2$, and thus $\maxpf(H_\psi)=2$. Let $H'_\psi$ be obtained from $H_\psi$ by adding, for $1\leq s\leq m$, a new vertex $\mu_s$, a positive arcs from each literal contained in the $s$th clause to $\mu_s$, and a negative arc from $\mu_s$ to $c_s$. Then one easily shows that $\maxpf(H'_\psi)< 2$: in some sense, the negative arcs ``destroy'' the positive cycles, and the system behaves as if there are only negative cycles. But now, let $D_\psi$ be obtained from $H'_\psi$ by adding new vertices $\lambda_1,\dots,\lambda_n$ and a positive (resp. negative) arc from $\lambda_r$ to the positive (resp. negative) literal corresponding to the $r$th variable. Then the situation is in-between: we can have $\max(D_\psi)= 2$ or $\max(D_\psi)< 2$ depending on $\psi$ and the initial choice for $H_\psi$. However, by choosing $H_\psi$ carefully, we have that $\max(D_\psi)=2$ if $\psi$ is satisfiable and $\max(D_\psi)<2$ otherwise, and it is thus $\NP$-hard to decide if $\maxpf(D)\geq 2$. 

\medskip
Furthermore, if $\psi$ is a $3$-CNF then $\Delta(D_\psi)\leq 3$, so the complexity lower bound still holds when the maximum in-degree is bounded. With few modifications, we can decrease the maximum in-degree to $2$ without modifying the complexity lower bound. In addition, for a fixed $k>2$, by adding $\lceil \log_2 k \rceil-1$ new vertices in $D_\psi$, and a positive loop on each new vertex, we obtain a SID $D'_\psi$ such that $\maxpf(D'_\psi)\geq k$  if and only if $\maxpf(D_\psi)\geq 2$. Thus, deciding if $\maxpf(D)\geq k$ for each fixed $k\geq 2$ is $\NP$-hard, even if the maximum in-degree of $D$ is bounded. When $k$ is part of the input  and the in-degree is bounded, the reduction is still obtained from minor modifications of $D_\psi$. In that case, the reduction is from \EMSAT, which is $\NPoSPoly$-complete. This problem asks, given a CNF formula $\psi$ over a set of $n$ variables and an integer $1\leq s\leq n$, if there exists an assignment $z$ of the first $s$ variables such that $\psi$ is satisfied by the majority of the $2^{n-s}$ assignments extending $z$.    

\medskip
When $k$ is part of the input and the in-degree is unbounded, the reduction differs from the previous case and is much more technical. This reduction is from \SSAT,  a classical $\NEXPTIME$-complete problem. It asks, given a succinct representation of a $3$-CNF formula $\Psi$, if $\Psi$ is satisfiable. First, we translate the succinct representation under the form of a BN $h$. The formula $\Psi$ is then encoded by the fixed points of $h$: to each clause, and each position in that clause, there is a fixed point encoding the literal contained in the clause in that position. In turn, these fixed points are encoded into a CNF formula $\phi$ (whose variables are the vertices of the SID of $h$). We then construct a SID $D_\Psi$ essentially from the union of the SID of $h$ and the SID $D_{\psi}$ described above, where $\psi$ is obtained from $\phi$ by adding a few extra clauses. It then appears that if the number of fixed points of a BN $f\in F(D_\Psi)$ is sufficiently large, then the fixed points of $f$ extend (in some way) those of $h$ (thanks to the formula $\phi$ contained in $\psi$), and thus encode $\Psi$ and, furthermore, we can extract from $f$ an assignment for $\Psi$ satisfying many clauses. Conversely, if many clauses of $\Psi$ can be simultaneously satisfied, then one can construct a BN $f\in F(D_\Psi)$ with many fixed points. We then obtain that $\Psi$ is satisfiable if and only if $\maxpf(D_\Psi)\geq k$ for some $k$ depending on $D_\Psi$. This shows that, when $k$ is part of the input, it is $\NEXPTIME$-hard to decide if $\maxpf(D)\geq k$. 

\medskip
The duality between positive and negative cycles, highlighted by the basic results given at the beginning of this subsection, allows us to adapt, with few modifications, all above constructions for the study of the minimum number of fixed points. In each reduction, it is enough to take one of the constructions and inverse the sign of a single arc. The analysis is then rather easy, because based on results concerning the maximum number of fixed points. Even if the constructions and the analysis are similar to the maximum number of fixed points, the complexity results differ. In particular, when $k$ is fixed and the maximum in-degree is bounded, the reduction is from \QSAT, which is $\NPoNP$-complete. This problem asks, given a CNF formula $\psi$ over a set of $n$ variables and an integer $1\leq s\leq n$, if there exists an assignment $z$ of the first $s$ variables such that $\psi$ is satisfied by all the assignments extending $z$.

\section{Preliminaries}\label{section:definition}

\subsection{Configurations} 

Given a finite set $V$, a {\em configuration} on $V$ is an element $x\in \bool^V$ that assigns a {\em state} $x_i\in\bool$ to every $i\in V$. For $a\in\bool$, we write $x=a$ to mean that $x_i=a$ for all $i\in V$. Given an enumeration $i_1,i_2,\dots,i_n$ of the elements of $V$, we use the one-line notation writing $x=x_{i_1}x_{i_2}\dots x_{i_n}$. For $I\subseteq V$, we denote by $x_I$ the restriction of $x$ on $I$, that is, the configuration $y\in\bool^I$ such that $y_i=x_i$ for all $i\in I$; and $x$ {\em extends} a configuration $y\in\bool^I$ if $x_I=y$. For every $i\in  V$, we denote the $i$-base vector by $e_i$, that is, $(e_i)_i=1$ and $(e_i)_j=0$ for all $j\neq i$. Given  $x,y\in\bool^V$, we denote by $x \xor y$ the configuration $z$ on $V$ such that $z_i=x_i \xor y_i$ for all $i\in V$, where the addition is computed modulo two. Hence, $x \xor e_i$ is the configuration obtained from $x$ by flipping the state of $i$ only. We write $x\leq y$ to means that $x_i\leq y_i$ for all $i\in V$. 

\subsection{Boolean networks} 

A {\em Boolean network} (BN) with component set $V$ is a function $f : \bool^V \to 
\bool^V$. Given an initial configuration $x^0$ on $V$, the dynamics of the network is described by the successive iterations of $f$, that is, $x^{t+1}=f(x^t)$ for all $t\in\mathbb{N}$. Hence, the state of component $i$ evolves according to the {\em local function} $f_i:\bool^V \to \bool$, which is the coordinate $i$ of $f$, that is, $f_i(x)=f(x)_i$. More generally, given $I\subseteq V$, we denote by $f_I$ the function from $\bool^V$ to $\bool^I$ defined by $f_I(x)=f(x)_I$. If $y\in\bool^I$ then $f_I=y$ means that $f_I$ is a constant function that always returns $y$.

\begin{example}\label{ex:f}
Here is a  BN $f$ with component set $V=\{1,2,3\}$ given under three equivalent forms: a table; a description of the three local functions by logical formulas; and a digraph with an arc from $x$ to $f(x)$ for each configuration $x$ (which is usually called the synchronous or parallel dynamics of $f$). 
\[
\begin{array}{c|c}
x & f(x)\\\hline 
000 & 100 \\
001 & 010 \\
010 & 100 \\
011 & 011 \\
100 & 010 \\
101 & 010 \\
110 & 010 \\
111 & 011 
\end{array}
\qquad
\begin{array}{l}
f_1(x)=\neg x_1\land\neg x_3\\
f_2(x)=x_1\lor x_3\\
f_3(x)=x_2\land x_3
\end{array}
\qquad
\begin{array}{c}
\begin{tikzpicture}
\node (011) at (4,0){$011$};
\node (010) at (0,0){$010$};
\node (100) at (2,0){$100$};
\node (111) at (4,1.5){$111$};
\node (001) at (-0.8,1.5){$001$};
\node (101) at (0,1.5){$101$};
\node (110) at (0.8,1.5){$110$};
\node (000) at (2,1.5){$000$};

\draw[->,thick] (011.-112) .. controls (3,-0.7) and (5,-0.7) .. (011.-68);

\path[->,thick]
(111) edge (011)
(001) edge[bend right=5] (010)
(110) edge[bend left=5] (010)
(101) edge (010)
(000) edge (100)
(010) edge[bend left=20] (100)
(100) edge[bend left=20] (010)
;
\end{tikzpicture}
\end{array}
\]
\end{example}

\subsection{Signed digraphs}

A {\em signed digraph} $D$ consists of a finite set of vertices $V_D$, a set of arcs $A\subseteq V_D\times V_D$ and an arc-labeling function $\sigma$ from $A$ to $\{-1,0,1\}$, which gives a sign (negative, \Null{} or positive) to each arc $(j,i)$, denoted $\sigma_{ji}$. In the following, it will be convenient to set $\tilde\sigma_{ji}=0$ if $\sigma_{ji}\geq 0$ and $\tilde\sigma_{ji}=1$ otherwise. We say that $D$ is {\em \simple{}} if it has no \Null{} arc, and {\em full-positive} if it has only positive arcs. Given a vertex $i$, we denote by $N_D(i)$ the set of in-neighbors of $i$ and, for $s\in\{-1,0,1\}$, we denote by $N^s_D(i)$ the set of in-neighbors $j$ of~$i$ with $\sigma_{ji}=s$. We drop $D$ in the previous notations when it is clear from the context. We call $N^1(i)$, $N^{0}(i)$ and $N^{-1}(i)$ the set of positive, \Null{} and negative in-neighbors of~$i$, respectively. We say that $i$ is a {\em source} if it has no in-neighbor. The maximum in-degree of $D$ is denoted $\Delta(D)$. Cycles and paths are always directed and without repeated vertices. The {\em sign} of a cycle or a path is the product of the signs of its arcs. We say that $i$ has a {\em positive loop} if $D$ has a positive arc from $i$ to itself (a positive cycle of length one). Given $I\subseteq V_D$, we denote by $D\setminus I$ the signed digraph obtained from $D$ by deleting the vertices in $I$ (with the attached arcs). We say that a signed digraph $D'$ is a {\em spanning subgraph} of $D$ if $D'$ can be obtained from $D$ by removing arcs only. We say that $I$ is a  {\em positive feedback vertex set} of $D$ if $D\setminus I$ has only negative cycles (or is acyclic). The minimum size of a positive feedback vertex set of $D$ is denoted $\tau^+(D)$. 

\begin{example}\label{ex:D}
Here is a signed digraph $D$ with $3$ vertices.
\[
\begin{array}{c}
\begin{tikzpicture}
\node[outer sep=1,inner sep=2,circle,draw,thick] (1) at (150:1){$1$};
\node[outer sep=1,inner sep=2,circle,draw,thick] (2) at (30:1){$2$};
\node[outer sep=1,inner sep=2,circle,draw,thick] (3) at (270:1){$3$};
\draw[red,-|,thick] (1.128) .. controls (140:1.8) and (160:1.8) .. (1.172);
\draw[Green,->,thick] (3.-112) .. controls (-100:1.8) and (-80:1.8) .. (3.-68);
\path[->,thick]
(1) edge[Green,bend left=15] (2)
(2) edge[Green,bend left=15] (3)
(3) edge[red,-|,bend left=15] (1)
(3) edge[Green,bend left=15] (2)
;
\end{tikzpicture}
\end{array}
\]
Green arrows represent positive arcs and T-end red arrows represent negative arcs; this convention is used throughout the paper. Hence, $D$ is nice. It has two positive cycles (of lengths $1$ and $2$) and two negative cycles (of lengths $1$ and $3$). Note that $\{3\}$ is a positive feedback vertex set, hence $\tau^+(D)=1$. 
\end{example}

\subsection{Signed interaction digraphs}

The {\em signed interaction digraph} (SID) of a BN $f$ with component set $V$ is the signed digraph $D_f$ with vertex set $V_D=V$ defined as follows. First, given $i,j \in V$, there is an arc $(j,i)$ if and only if there exists a configuration $x$ on $V$  such that $f_i(x \xor e_j) \neq f_i(x)$ ({\ie} the local function $f_i$ depends on component $j$). Second, the sign $\sigma_{ji}$ of an arc $(j,i)$ depends on whether the local function $f_i$ is increasing or decreasing with respect to component $j$, and is defined as
\[
\sigma_{ji}=\left\{\begin{array}{rl}
1  &\quad \text{if  $f_i(x \xor e_j)\geq f_i(x)$ for all $x\in\bool^V$ with $x_j=0$,}\\
-1 &\quad \text{if  $f_i(x \xor e_j)\leq f_i(x)$ for all $x\in\bool^V$ with $x_j=0$,}\\
0  &\quad \text{otherwise.}
\end{array}\right.
\]
Hence, a positive (resp. negative, \Null{}) arc from $j$ to $i$ means that $f_i(x)$ is an increasing (resp. decreasing, non-monotone) function of $x_j$.

\begin{example}
The SID of the BN $f$ given in Example~\ref{ex:f} is the signed digraph $D$ given in Example~\ref{ex:D}. One can easily deduce that from description of the three local functions by logical formulas. First, we have $f_1(x)=\neg x_1\land \neg x_3$, thus $f_1(x)$ only depends on $x_1$, $x_3$, and is decreasing with $x_1,x_2$. Hence, in the SID of $f$, vertex $1$ has two in-coming arcs, from vertex $1$ and vertex $3$, both negative. Second, we have $f_2(x)= x_1\lor x_3$, thus $f_2(x)$ only depends on $x_1$, $x_3$, and is increasing with $x_1,x_3$. Hence, in the SID of $f$, vertex $2$ has two in-coming arcs, from vertex $1$ and vertex $3$, both positive. Third, we have $f_3(x)= x_2\land x_3$, thus $f_3(x)$ only depends on $x_2$, $x_3$, and is increasing with $x_2,x_3$. Hence, in the SID of $f$, vertex $3$ has two in-coming arcs, from vertex $2$ and vertex $3$, both positive.
\end{example}

A signed digraph is called \an{} SID if it is the SID of at least one BN. Such signed digraphs are characterized below (as a consequence, every \simple{} signed digraph is \an{} SID). 

\begin{proposition}[\cite{PR10}]\label{pro:SID}
A signed digraph $D$ is \an{} SID if and only if $|N^0_D(i)|\neq 1$ or $|N_D(i)|\geq 3$ for all vertex $i$.
\end{proposition}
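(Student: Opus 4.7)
The plan is to argue both directions of the characterisation via an analysis of small Boolean functions.

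For necessity, I would assume $D=D_f$ for some BN $f$ and fix a vertex $i$ with $|N^0_D(i)|=1$, aiming to show $|N_D(i)|\geq 3$. Let $j$ denote the unique non-monotone in-neighbour of $i$. If $|N_D(i)|=1$, then $f_i$ depends only on $x_j$ and is therefore either the identity or the negation of $x_j$, both monotone, contradicting $\sigma_{ji}=0$. If $|N_D(i)|=2$, say $N_D(i)=\{j,k\}$, then non-monotonicity in $x_j$ produces configurations $x,y$ with $x_j=y_j=0$ and opposite signs of flip; since $k$ is the only other relevant coordinate, these must have $x_k\neq y_k$, so one of the two slices $x_j\mapsto f_i(x_j,x_k)$ obtained by fixing $x_k\in\{0,1\}$ is the identity and the other the negation. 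Consequently $f_i(x_j,x_k)=x_j\oplus x_k\oplus c$ for some constant $c$, and this expression is symmetric in $j$ and $k$, so $\sigma_{ki}=0$ as well, contradicting $|N^0_D(i)|=1$.

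For sufficiency, assume $D$ satisfies the property at every vertex; I would build $f$ coordinate by coordinate. For each $i$, set $y_l=x_l$ if $l\in N^+_D(i)$ and $y_l=\neg x_l$ if $l\in N^-_D(i)$, and define the filter $\pi=\bigwedge_{l\in N^+_D(i)\cup N^-_D(i)}y_l$, the empty conjunction being $1$. Three cases cover the hypothesis. If $|N^0_D(i)|=0$, take $f_i=\pi$. If $|N^0_D(i)|\geq 2$, enumerate the non-monotone in-neighbours as $j_1,\dots,j_t$ and take $f_i=(x_{j_1}\oplus\cdots\oplus x_{j_t})\land\pi$: the parity factor depends non-monotonically on each $j_s$ (the sign of the flip is reversed by toggling any other $j_{s'}$), while $\pi$ contributes the signed in-neighbours monotonically in the correct direction. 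If $|N^0_D(i)|=1$ and $|N_D(i)|\geq 3$, let $j$ be the non-monotone in-neighbour, pick two other in-neighbours $k_1,k_2$ (possible by hypothesis), let $\pi'$ be $\pi$ with the factors for $k_1,k_2$ removed, and use the multiplexer
\[
f_i=\pi'\land\bigl((\neg x_j\land y_{k_1})\lor(x_j\land y_{k_2})\bigr),
\]
which equals $y_{k_1}$ when $x_j=0$ and $y_{k_2}$ when $x_j=1$. Inside the locus $\pi'=1$, the configurations $(y_{k_1},y_{k_2})=(1,0)$ and $(0,1)$ witness both signs of flip in $x_j$, each branch is monotone in its selected variable with the sign dictated by $y_{k_s}$, and $\pi'$ absorbs the remaining signed in-neighbours.

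The hard part is case (iii): with only two in-neighbours one cannot combine a single non-monotone dependence with a monotone one, which is exactly the obstruction behind the bound $|N_D(i)|\geq 3$, and the multiplexer is the simplest gadget that resolves it. Beyond identifying this obstruction, the routine verification in each case---that the effective in-neighbours of $f_i$ are exactly $N_D(i)$ with the prescribed signs---relies only on the monotonicity of AND-conjunctions and the mixed monotonicity of the multiplexer above.
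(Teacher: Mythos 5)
Your proof is correct and complete: the necessity direction correctly identifies that a Boolean function of at most two variables that is non-monotone in one of them must be (up to negation) that variable alone or the XOR of both, forcing either no dependence or two zero-signed arcs; and the three constructions in the sufficiency direction (signed AND, parity filtered by a signed AND, and the multiplexer for the single-zero case) each realize exactly the prescribed in-neighborhood with the prescribed signs. The paper itself gives no proof of this proposition --- it is cited from the reference [PR10] --- so there is no in-paper argument to compare against; your argument stands on its own as a valid self-contained proof of the characterization.
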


A fundamental remark regarding the present work is that multiple BNs may have the same SID. Given \an{} SID $D$, we denote by $\functions(D)$ the corresponding BNs:
\[
\functions(D)=\{f:\bool^{V_D}\to\bool^{V_D}\mid D_f=D\}.
\]
The size of $\functions(D)$ is generally large: there are $2^{n2^n}$ BNs with $n$ components and only $3^n$ signed digraphs with $n$ vertices. Given $i\in V_D$, we denote by $\functions_i(D)$ the possible local functions for $i$, that is, 
\[
\functions_i(D)=\{f_i\mid f\in \functions(D)\}.
\]
The size of $\functions_i(D)$ is doubly exponential according to the in-degree $d$ of $i$ in $D$, thus it scales as the number of Boolean functions on $d$ variables, $2^{2^d}$. The precise value of $|\functions_i(D)|$ is not trivial and has been extensively studied when $i$ has only positive in-neighbors, see {\tt A006126} on the OEIS~\cite{oeisA00616}. 

\medskip
In the following, we will often consider vertices with in-degree $\leq 2$ and, in that case, the situation is clear. If $i$ is a source, then there are only two possible local functions, the two constant local functions. If $i$ has a unique in-neighbor, say $j$, then it is not a \Null{} in-neighbor by Proposition \ref{pro:SID}, and there is a unique possible local function: we have $f_i(x)=x_j\xor\tilde \sigma_{ji}$. If $i$ has only positive or negative in-neighbors, we say that $f_i$ is the AND (resp. OR) function if it is the ordinary logical and (resp. or) but inputs with a negative sign are flipped, that~is, 
\[
  f_i(x)=\bigwedge_{j\in N(i)} x_j \xor \tilde\sigma_{ji}\qquad\text{(resp.}~ 
f_i(x)=\bigvee_{j\in N(i)} x_j \xor \tilde\sigma_{ji}
~\text{)}.
\] 
(If $i$ is of in-degree one, the AND and OR functions are identical.) Now, if $i$ is of in-degree two and has no \Null{} in-neighbor, there are only two possibles local functions: the AND function and the OR function.   

\begin{example}\label{ex:F(D)}
Take again the signed digraph $D$ of Example~\ref{ex:D}. It is nice, and each vertex is of in-degree two. So for each $f\in\functions(D)$ and $i\in\{1,2,3\}$ the local functions $f_i$ is either the AND function of the OR function. Thus, $|\functions_{i}(D)|=2$ for $i\in\{1,2,3\}$ and $|\functions(D)|=8$. The table of the $8$ BNs in $\functions(D)$, denoted from $f^1$ to $f^8$, is given below; fixed points are in bold. Note that the BN considered in Example~\ref{ex:f} is $f^3$. 
\[
\begin{array}{c}
\begin{tikzpicture}
\node[outer sep=1,inner sep=2,circle,draw,thick] (1) at (150:1){$1$};
\node[outer sep=1,inner sep=2,circle,draw,thick] (2) at (30:1){$2$};
\node[outer sep=1,inner sep=2,circle,draw,thick] (3) at (270:1){$3$};
\draw[red,-|,thick] (1.128) .. controls (140:1.8) and (160:1.8) .. (1.172);
\draw[Green,->,thick] (3.-112) .. controls (-100:1.8) and (-80:1.8) .. (3.-68);
\path[->,thick]
(1) edge[Green,bend left=15] (2)
(2) edge[Green,bend left=15] (3)
(3) edge[red,-|,bend left=15] (1)
(3) edge[Green,bend left=15] (2)
;
\end{tikzpicture}
\end{array}
\qquad
\begin{array}{c|c|c|c|c|c|c|c|c}
x & f^{1}(x) & f^{2}(x) & f^{3}(x) & f^{4}(x) & f^{5}(x) & f^{6}(x) & f^{7}(x) & f^{8}(x) \\\hline
000 & 100 & 100 & 100 & 100 & 100 & 100 & 100 & 100 \\
001 & 000 & \bold{001} & 010 & 011 & 100 & 101 & 110 & 111 \\
010 & 100 & 101 & 100 & 101 & 100 & 101 & 100 & 101 \\
011 & 001 & 001 & \bold{011} & \bold{011} & 101 & 101 & 111 & 111 \\
100 & 000 & 000 & 010 & 010 & \bold{100} & \bold{100} & 110 & 110 \\
101 & 010 & 011 & 010 & 011 & 010 & 011 & 010 & 011 \\
110 & 000 & 001 & 010 & 011 & 100 & 101 & \bold{110} & 111 \\
111 & 011 & 011 & 011 & 011 & 011 & 011 & 011 & 011 \\
\end{array}
\]
\end{example}

\subsection{Fixed points and decision problems}
 
A {\em fixed point} of $f$ is a configuration $x$ such that $f(x)=x$. We denote by $\phi(f)$ the number of fixed points of $f$. In this paper, we are interested in decision problems related to the maximum and minimum number of fixed points of BNs in $\functions(D)$, denoted
\[
\begin{array}{l}
\maxpf(D)=\max\left\{\phi(f) \mid f \in \functions(D)\right\},\\[2mm]
\minpf(D)=\min\left\{\phi(f) \mid f \in \functions(D)\right\}.
\end{array}
\]

\begin{example}
For the signed digraph $D$ of Example~\ref{ex:D}, we have $\minpf(D)=0$ and $\maxpf(D)=1$. This can be easily deduce from the table given in Example~\ref{ex:F(D)}.
\end{example}

More precisely, we will study the complexity of deciding if $\maxpf(D)\geq k$ or $\minpf(D) < k$, where $k$ is a positive integer, fixed or not. This gives the following decision problems. 

\begin{quote}
	\decisionpb
	{\KMaxFPP ~(\kmaxfpp{k})}
	{\an{} SID $D$.}{$\maxpf(D) \geq  k$?}
\end{quote}

\begin{quote}
	\decisionpb
	{\MaxFPP ~(\maxfpp)}
	{\an{} SID $D$ and an integer $k\geq 1$.}{$\maxpf(D) \geq  k$?}
\end{quote}

\begin{quote}
	\decisionpb
	{\KMinFPP ~(\kminfpp{k})}
	{\an{} SID $D$.}{$\minpf(D) <  k$?}
\end{quote}

\begin{quote}
	\decisionpb
	{\MinFPP~(\minfpp)}
	{\an{} SID $D$ and an integer $k\geq 1$.}{$\minpf(D) <  k$?}
\end{quote}

\medskip
All these problems are in $\NEXPTIME$ (they can be decided in exponential time on a non-deterministic Turing machine). For instance the problem \maxfpp{} can be decided as follows. Given \an{} SID $D$ with $n$ vertices and an integer $k$:
\begin{enumerate}
	\item 
	Choose non-deterministically a BN $f$ with component set $V_D$; this can be done in exponential time since $f$ can be represented using $n2^{n}$ bits. 
	\item 
	Compute the SID $D_f$ of $f$; this can be done in exponential time by comparing $f_i(x)$ and $f_i(x\xor e_j)$ for all configurations $x$ on $V_D$ and vertices $i,j\in V_D$.  
	\item 
	Compute $\phi(f)$ by considering each of the $2^n$ configurations.
	\item 
	Accept if and only if $\phi(f) \geq k$ and $D_f=D$.
\end{enumerate}
This non-deterministic exponential time algorithm has an accepting branch if and only if $\maxpf(D) \geq  k$. Therefore, the problem \maxfpp{} is in $\NEXPTIME$. This algorithm can be adapted to the other problems.

\medskip
However, as we will see later, this complexity bound can be refined for some problems or when we restrict the problems to some subclass of SIDs, such as SIDs with a maximum in-degree bounded by a constant $d$. This is because the SIDs of this subclass only admit a simple exponential number of BNs. Indeed, if $f \in \functions(D)$ and $\Delta(D)\leq d$, then each local function $f_i$ can be regarded as a Boolean function with $|N(i)|\leq d$ inputs. Consequently, there are at most $c = 2^{2^d}$ possible choices for each local function, and $|\functions(D)| \leq c^n$ is a simple exponential. 

\medskip
Remark~\ref{remark:delta_1} below shows that the restriction to SIDs $D$ with maximum in-degree $\leq 1$ reduces drastically the hardness of the problems. Therefore, in the rest of the article, we will only consider SIDs whose maximum in-degree is bounded by a constant $d \geq 2$ or not bounded at all.

\begin{remark} \label{remark:delta_1}
The decisions problems \kmaxfpp{k}, \maxfpp{}, \kminfpp{k} and \minfpp{} restricted to SIDs $D$ such that $\Delta(D) \leq 1$ are in $\Poly$. Indeed, if $\Delta(D) \leq 1$ then the numbers $c^+$ and $c^-$ of positive and negative cycles can be computed in polynomial time (and $c^-+c^+$ is at most the number of vertices). Furthermore, there is a unique $f\in\functions(D)$, and one easily check that 
	\[
	\phi(f) =
	\begin{cases}
	0 & \text{if $c^->0$}, \\
	2^{c^+} & \text{otherwise}.
	\end{cases}
	\]
As a consequence, computing $\phi(f)$, which is obviously equal to both $\maxpf(D)$ and $\minpf(D)$, can be done in polynomial time, and it is sufficient to compare this value to $k$ in order to answer any problem.
\end{remark}

\subsection{Basic results}

Cycles of $D$ are known to play a fundamental role concerning the number of fixed points (and the above remark already shows this). A basic result by Robert is that if $D$ is acyclic, then every $f\in \functions(D)$ has a unique fixed point (thus $\maxpf(D)=\minpf(D)=1$). Considering positive and negative cycles, Aracena \cite{A04} proved the following other basic results, that show a kind of duality between the two types of cycle (note that Robert's result is an immediate consequence of the first two items).

\begin{theorem}[\cite{A04}]\label{thm:basic_results}
Let $D$ be \an{} SID.
\begin{enumerate}
\item If $D$ has only negative cycles then $\maxpf(D)\leq 1$.
\item If $D$ has only positive cycles then $\minpf(D)\geq 1$.
\item If $D$ is strongly connected and has only negative cycles then $\maxpf(D)<1$.
\item If $D$ is strongly connected and has only positive cycles then $\minpf(D)>1$. 
\end{enumerate}
\end{theorem}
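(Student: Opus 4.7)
Proof plan. The four items split into two dual halves: items~1 and~3 concern $\maxpf$ and negative cycles, items~2 and~4 concern $\minpf$ and positive cycles. I sketch items~1 and~3; items~2 and~4 follow by symmetric arguments.

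For item~1 the plan is to prove the contrapositive: if $f\in\functions(D)$ has two distinct fixed points $x\neq y$, then $D$ has a cycle whose sign is not $-1$. A \emph{switching} reduction replaces $f$ by $f'(z):=f(z\xor x)\xor x$, a BN with the same SID arc set as $D$ in which each nonzero sign $\sigma_{ji}$ is multiplied by $(-1)^{x_i+x_j}$ (zero signs are preserved). Since every vertex of any cycle appears once as a head and once as a tail, the multiplier collapses on cycles, so cycle signs are preserved; the fixed points of $f'$ are $0$ and $x\xor y$. We may therefore assume $x=0$ and $y=e_I:=\sum_{i\in I}e_i$, where $I:=\{i:x_i\neq y_i\}$, giving $f_i(0)=0$ and $f_i(e_I)=1$ for every $i\in I$. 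Fix an enumeration $k_1,\dots,k_m$ of $I$ and form the monotone path $0=z^0<z^1<\dots<z^m=e_I$ with $z^t:=z^{t-1}\xor e_{k_t}$. For each $i\in I$, the telescoping $\sum_{t=1}^m(f_i(z^t)-f_i(z^{t-1}))=1$ with increments in $\{-1,0,1\}$ yields a step $t$ with $f_i(z^t)-f_i(z^{t-1})=1$. At that step $k_t$ flips from $0$ to $1$, so $(k_t,i)$ is an arc of $D$ whose sign cannot be $-1$ (this would force a decrease), i.e.\ $\sigma_{k_t,i}\in\{0,+1\}$. Setting $\pi(i):=k_t$ and iterating $\pi:I\to I$ produces a cycle in the subgraph of $D$ induced by $I$ whose arcs all have sign in $\{0,+1\}$; its sign is therefore in $\{0,+1\}$, not $-1$, contradicting the hypothesis.

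For item~3, suppose for contradiction that some $f\in\functions(D)$ has a fixed point. By item~1 it is unique, and the switching reduction lets us take it to be $0$, so $f_i(0)=0$ for every $i$. The hypothesis forces $D$ to be nice (a zero-sign arc on a cycle would give cycle sign $0\neq-1$), and strong connectivity places every vertex on some negative cycle $C=(i_0,\dots,i_k=i_0)$. The plan is to exhibit a \emph{second} fixed point $y\neq 0$ of $f$, contradicting the uniqueness just proved. Using the boundary condition $f_{i_{t+1}}(0)=0$ together with the monotonicity of $f_{i_{t+1}}$ in $i_t$, one propagates a flipped value along $C$; the parity $\prod_t\sigma_{i_t,i_{t+1}}=-1$ is precisely what makes the propagation close coherently after one full revolution. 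Strong connectivity is then used to extend the resulting values on the vertices of $C$ to a global fixed point of $f$ on $V_D$ by a similar propagation along paths from $C$ to the remaining vertices.

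Items~2 and~4 follow by dual arguments. For item~2 the analogous lemma is: if $f$ has no fixed point then $D$ has a negative cycle; it is proved by iterating single-bit updates on the sets $\{i:f_i(z)\neq z_i\}$ and tracking the accumulated signs of the forced changes. Item~4 mirrors item~3, constructing a second fixed point via propagation around a positive cycle whose parity $+1$ permits closure. The principal obstacle throughout the plan is the closure/extension step for items~3 and~4: producing a globally consistent second fixed point (or ruling out the first) in a strongly connected SID requires extending locally-forced cycle values to the whole of $V_D$, an extension that essentially uses strong connectivity and is where the most delicate technical work lies.
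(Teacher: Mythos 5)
The paper does not actually prove this theorem --- it imports it from Aracena \cite{A04} (and Lemma~\ref{lem:positive_feedback_lemma}, also imported, contains item~1 as the case $I=\emptyset$) --- so there is no in-paper proof to compare against; I assess your argument on its own merits. Your item~1 is correct and complete: the switching $f'(z)=f(z\xor x)\xor x$ preserves the arc set, multiplies $\sigma_{ji}$ by $(-1)^{x_i+x_j}$ and hence preserves all cycle signs, and the telescoping along a monotone path from $0$ to $e_I$ correctly yields, for each $i$ in the disagreement set $I$, an in-neighbor $\pi(i)\in I$ with $\sigma_{\pi(i)i}\neq -1$, whence a non-negative cycle. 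This is the same mechanism the paper packages as Lemmas~\ref{lem:partial_order} and~\ref{lem:pseudo_monotone}.

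The other three items are where the genuine gaps lie. For item~3 your key claim is backwards: on a negative cycle the number of negative arcs is odd, so a propagation of the form $y_{i_{t+1}}:=y_{i_t}\xor\tilde\sigma_{i_ti_{t+1}}$ returns $y_{i_0}\xor 1$ after one full revolution --- negative parity is exactly what makes the propagation \emph{fail} to close (it closes for positive cycles, which is why a variant of your idea is relevant to item~4, not item~3). Constructing a second fixed point is also the wrong move: the short proof of item~3 is that if $f(x)=x$ then, $D$ being \simple{} (every arc lies on a cycle by strong connectivity, so a \Null{} arc would give a cycle of sign $0$) and every vertex having an in-neighbor, Lemma~\ref{lem:partial_order}(2) gives each $i$ an in-neighbor $j$ with $x_j\xor\tilde\sigma_{ji}=x_i$; the spanning subgraph formed by these arcs has in-degree one everywhere, hence contains a cycle, and that cycle is positive --- contradiction. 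No second fixed point and no extension step are needed; this is precisely the argument the paper runs inside Lemma~\ref{lemma:kmaxfpp1_eq_evencycle}.

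Items~2 and~4 do not follow from items~1 and~3 ``by duality'': item~1 compares two given fixed points, whereas item~2 must \emph{produce} one, an existence statement with a genuinely different proof (standardly, induction over the strongly connected components in topological order, using item~4 on each non-trivial component); your one-sentence sketch of ``iterating single-bit updates and tracking accumulated signs'' does not establish it. For item~4 the workable route is to switch $D$ into a full-positive SID (multiply all arcs incident to a chosen vertex set by $-1$; this preserves cycle signs, and a strongly connected signed digraph with only positive cycles can always be switched to a full-positive one), after which every $f_i$ is non-decreasing and non-constant, so the all-zero and all-one configurations are two distinct fixed points of $f$. Your proposed ``extension along paths from $C$ to the remaining vertices'' would not work as stated: $f_i$ depends on all of its in-neighbors, and distinct paths into the same vertex can force conflicting values, which is exactly the difficulty you flag but do not resolve.
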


\medskip
The first item can be widely generalized into the following bound (many extensions or improvements for particular classes of SIDs  exist; see \cite{R08,GRR15,ARS17} for instance).  

\begin{theorem}[Positive feedback bound \cite{A04}]\label{thm:bound}
For every SID $D$ we have $\maxpf(D)\leq 2^{\tau^+(D)}$.
\end{theorem}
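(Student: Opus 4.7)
The plan is to fix an arbitrary $f \in \functions(D)$ and show $\phi(f) \leq 2^{\tau^+(D)}$ by partitioning the fixed points of $f$ according to their restriction on a minimum positive feedback vertex set $I$ of $D$ (so $|I| = \tau^+(D)$). For each $y \in \bool^I$, I will bound the number of fixed points $x$ of $f$ with $x_I = y$ by $1$; summing over the $2^{|I|}$ choices of $y$ then yields the claim.

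The key construction is the induced BN $g_y : \bool^{V_D \setminus I} \to \bool^{V_D \setminus I}$ given by $g_y(z) = f_{V_D \setminus I}(x)$, where $x$ is the configuration satisfying $x_I = y$ and $x_{V_D \setminus I} = z$. Any fixed point of $f$ with $x_I = y$ restricts to a fixed point $x_{V_D \setminus I}$ of $g_y$, so the number of fixed points of $f$ extending $y$ is at most $\phi(g_y)$. It therefore suffices to prove $\phi(g_y) \leq 1$, and Theorem~\ref{thm:basic_results}(1) reduces this to showing that the SID of $g_y$ has only negative cycles.

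To establish this, I would first note that any arc $(j,i)$ of the SID of $g_y$ forces an arc $(j,i)$ in $D$ with $i,j \in V_D \setminus I$, so the arc set of the SID of $g_y$ is contained in the arc set of $D \setminus I$. Moreover, if $(j,i)$ is positive (resp.\ negative) in $D$, then the defining monotonicity inequality holds for every $x \in \bool^{V_D}$ with $x_j = 0$, and in particular for those with $x_I = y$, so the corresponding arc of the SID of $g_y$ keeps the same sign. The only delicate case is a \Null{} arc of $D$ that could acquire a sign after restriction; but such an arc between two vertices of $V_D \setminus I$ cannot lie in any cycle of $D \setminus I$, since that cycle would then have sign $0$, contradicting the assumption that $D \setminus I$ has only negative cycles.

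Consequently, every cycle of the SID of $g_y$ is also a cycle of $D \setminus I$ whose arcs carry matching non-\Null{} signs in both digraphs, so it is negative. Theorem~\ref{thm:basic_results}(1) then gives $\phi(g_y) \leq 1$, and the partition argument over $y \in \bool^I$ concludes. I expect the main — though modest — obstacle to be the sign-transfer step, and in particular ruling out the \Null{}-arc edge case described above; the rest of the proof is essentially bookkeeping.
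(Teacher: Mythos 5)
Your proof is correct. At the top level it follows exactly the route the paper takes: the theorem is reduced to the statement that the restriction map $x\mapsto x_I$ is injective on the fixed points of $f$ for a minimum positive feedback vertex set $I$, which is precisely Lemma~\ref{lem:positive_feedback_lemma}. The difference is that the paper cites both the theorem and that lemma from \cite{A04} without proof, whereas you actually supply the missing argument: you freeze the components of $I$ to a value $y$, observe that the resulting network $g_y$ on $V_D\setminus I$ has an SID whose arcs and signs are inherited from $D\setminus I$, and invoke Theorem~\ref{thm:basic_results}(1) to get at most one fixed point per $y$. Your handling of the one delicate point --- a \Null{} arc of $D$ acquiring a sign after restriction --- is sound, since under the paper's convention a cycle through a \Null{} arc has sign $0$ and hence cannot occur in $D\setminus I$ when $D\setminus I$ has only negative cycles; so every cycle of the SID of $g_y$ consists of non-\Null{} arcs whose signs transfer, and is negative. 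In short: same decomposition as the paper, but self-contained, at the cost of relying on Theorem~\ref{thm:basic_results}(1) (itself cited from \cite{A04}) as the base fact.
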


The positive feedback bound is an immediate consequence of the following lemma, used many times in the following.

\begin{lemma}[\cite{A04}]\label{lem:positive_feedback_lemma}
Let $D$ be \an{} SID, let $I$ be a positive feedback vertex set of $D$ and $f\in\functions(D)$. If $f$ has distinct fixed points $x,y$, then $x_I\neq y_I$. 
\end{lemma}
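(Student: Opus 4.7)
The plan is to argue by contradiction. Suppose that $f$ has two distinct fixed points $x,y$ with $x_I=y_I$. I would then build an auxiliary BN on $V_D\setminus I$ in which the components of $I$ are frozen to their common value, show that its SID inherits the ``only negative cycles (or acyclic)'' property from $D\setminus I$, and invoke Theorem~\ref{thm:basic_results}(1) to derive a contradiction.

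More precisely, let $V=V_D$, set $a=x_I=y_I\in\bool^I$, and define $g:\bool^{V\setminus I}\to\bool^{V\setminus I}$ by $g(z)=f(z\cdot a)_{V\setminus I}$, where $z\cdot a$ denotes the configuration on $V$ that equals $z$ on $V\setminus I$ and $a$ on $I$. The key observation is that $x_{V\setminus I}$ and $y_{V\setminus I}$ are both fixed points of $g$: indeed, since $f(x)=x$ and $x_I=a$, we have $x_{V\setminus I}\cdot a=x$ and thus $g(x_{V\setminus I})=f(x)_{V\setminus I}=x_{V\setminus I}$, and likewise for $y$. Moreover, because $x\neq y$ while $x_I=y_I$, these two fixed points are distinct, so $\phi(g)\geq 2$.

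Next I would verify that $D_g$ has only negative cycles (or is acyclic). If $(j,i)$ is an arc of $D_g$, then $g_i$ depends essentially on coordinate $j\in V\setminus I$; since $g_i$ is obtained from $f_i$ by fixing the variables in $I$ to the constant $a$, $f_i$ must also depend essentially on $j$, so $(j,i)$ is an arc of $D\setminus I$. Furthermore, the monotonicity of $g_i$ in $x_j$ is inherited from that of $f_i$: if $f_i$ is globally increasing (resp. decreasing) in $x_j$, then so is $g_i$ (or $g_i$ does not depend on $x_j$ at all, in which case $(j,i)$ is not an arc of $D_g$). Consequently, the sign of $(j,i)$ in $D_g$ coincides with its sign in $D\setminus I$ whenever the arc is present, so $D_g$ is a spanning subgraph of $D\setminus I$ with the same signs on the arcs it retains. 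Every cycle of $D_g$ is therefore a cycle of $D\setminus I$ with the same sign, and since $I$ is a positive feedback vertex set of $D$, this cycle must be negative. Hence $D_g$ has only negative cycles (or is acyclic), and Theorem~\ref{thm:basic_results}(1) yields $\phi(g)\leq\maxpf(D_g)\leq 1$, contradicting $\phi(g)\geq 2$.

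The only subtle step is the monotonicity inheritance used to argue that signs in $D_g$ agree with those in $D\setminus I$; this is straightforward from the definition of the sign of an arc (substituting constants for the $I$-coordinates preserves any global monotone dependence), but it is the one place where the proof uses more than the mere arc structure. Everything else is a direct combination of Theorem~\ref{thm:basic_results}(1) with the definition of positive feedback vertex set.
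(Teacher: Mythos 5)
Your proof is correct, but note that the paper does not prove this lemma at all: it is quoted from Aracena~\cite{A04}, so there is no in-text argument to match. Your route (freeze the $I$-coordinates to their common value $a$, observe that the restricted network $g$ on $V_D\setminus I$ has two distinct fixed points, and check that $D_g$ is a sign-preserving subgraph of $D\setminus I$ so that Theorem~\ref{thm:basic_results}(1) applies) is a clean reduction to the first item of Theorem~\ref{thm:basic_results}. The classical argument, which the paper essentially re-derives inside Lemmas~\ref{lemma:kmaxfpp1_eq_evencycle} and~\ref{lem:pseudo_monotone}, is more direct: from two distinct fixed points $x,y$ one extracts, on the disagreement set $\{i : x_i\neq y_i\}$, a subgraph in which every vertex has an in-neighbor $j$ with $x_j\xor\tilde\sigma_{ji}=x_i$, hence a positive cycle supported on that set; since $x_I=y_I$ forces the disagreement set to avoid $I$, this positive cycle lies in $D\setminus I$, a contradiction. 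The direct argument is self-contained (Theorem~\ref{thm:basic_results} is itself proved this way), whereas yours buys brevity at the cost of invoking that theorem as a black box. One overstatement to fix: it is not true that an arc present in $D_g$ always carries the same sign as in $D\setminus I$ — a \Null{} arc of $D$ may become positive or negative after substituting $x_I=a$. This is harmless here only because every cycle of $D\setminus I$ has sign $-1$ and therefore contains no \Null{} arc, so sign inheritance does hold on every arc lying on a cycle of $D_g$; you should state the claim in that restricted form.
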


The proof of all the previous results uses at some point the following lemma, that we will also use intensively (the first item appears in \cite{GRR15} and the second is a consequence of the first; we give a proof for completeness). Given \an{} SID $D$ and $i\in V_D$, we define the partial order $\leq^D_i$ on $\bool^{V_D}$ by 
\[
x\leq^D_i y
\iff 
x_{N^1(i)}\leq y_{N^{1}(i)}\text{ and }
y_{N^{-1}(i)}\leq x_{N^{-1}(i)}\text{ and }
x_{N^0(i)}= y_{N^{0}(i)}.
\]

\begin{lemma} \label{lem:partial_order}
Let $D$ be a SID with sign function $\sigma$, and let $f\in \functions(D)$. For every $i\in V_D$, we have the following properties:
\begin{enumerate}
\item \label{prop1} 
The local function $f_i$ is non-decreasing with respect to $\leq^D_i$, that is, for any configurations $x,y$ on $V_D$ we have 
	\[
	x\leq^D_i y\quad\Rightarrow\quad f_i(x)\leq f_i(y).
	\]
\item\label{prop2}
If $i$ has at least one in-neighbor and at most one \Null{} in-neighbor then, for any configuration $x$ on $V_D$, there is a non-\Null{} in-neighbor $j$ of $i$ such that $f_i(x)=x_j\xor\tilde\sigma_{ji}$.
\end{enumerate}
\end{lemma}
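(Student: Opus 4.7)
My plan is to prove property 1 by a coordinate-by-coordinate flipping argument, and then to deduce property 2 by combining property 1 with the precise definition of the sign of an arc.

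For property 1, given $x \leq^D_i y$, I build a sequence of single-coordinate flips from $x$ to a configuration $z$ agreeing with $y$ on $N(i)$, along which $f_i$ never decreases. First I flip, one at a time, every $j \in N^1(i)$ with $x_j = 0$ and $y_j = 1$ from $0$ to $1$; each such flip does not decrease $f_i$ by the very definition of a positive arc. Then I flip every $j \in N^{-1}(i)$ with $x_j = 1$ and $y_j = 0$ from $1$ to $0$; each such flip does not decrease $f_i$ by the definition of a negative arc. Since $x$ and $y$ agree on $N^0(i)$ and $f_i$ does not depend on vertices outside $N(i)$, the endpoint $z$ satisfies $f_i(z) = f_i(y)$, giving $f_i(x) \leq f_i(y)$.

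For property 2, I argue by contradiction. Suppose that at some configuration $x$, writing $a = f_i(x)$, no non-null in-neighbor $j$ satisfies $f_i(x) = x_j \xor \tilde\sigma_{ji}$. Unpacking this, $x_j = 1-a$ for every $j \in N^1(i)$ and $x_j = a$ for every $j \in N^{-1}(i)$, so $x$ is extremal on $N^1(i) \cup N^{-1}(i)$ with respect to $\leq^D_i$ (maximal if $a=0$, minimal if $a=1$). Let $x'$ be the opposite extremal, agreeing with $x$ on $N^0(i)$. By property 1, $f_i(x') = a$; moreover every $y$ with $y_{N^0(i)} = x_{N^0(i)}$ lies between $x$ and $x'$ in $\leq^D_i$, so $f_i \equiv a$ on the entire slice $\{y : y_{N^0(i)} = x_{N^0(i)}\}$. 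I then split on $|N^0(i)|$, which by hypothesis is at most $1$. If $|N^0(i)| = 0$, the slice is all of $\bool^{V_D}$, so $f_i$ is a constant function; this contradicts the assumption that $i$ has an in-neighbor, since that forces $f_i$ to depend essentially on some coordinate and hence to be non-constant. If $|N^0(i)| = 1$, let $k$ be the unique null in-neighbor; then $f_i \equiv a$ on $\{y : y_k = x_k\}$, and a direct check of the sign definition shows that the arc $(k,i)$ is then positive or negative (its sign being determined by $x_k$ and $a$), contradicting the fact that $k$ is null.

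The only really delicate step is this last case analysis: constancy of $f_i$ on a full $N^0$-slice must be used to kill non-monotonicity in the unique null in-neighbor $k$, and the constraint $|N^0(i)| \leq 1$ is precisely what makes this possible. Everything else reduces to one-coordinate flips and the definition of signed arcs.
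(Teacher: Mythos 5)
Your proof is correct and follows essentially the same route as the paper's: property 1 via one-coordinate flips justified directly by the arc-sign definitions (the paper phrases this as an induction on Hamming distance), and property 2 by contradiction, using extremality with respect to $\leq^D_i$ to force $f_i$ to be constant on the $N^0(i)$-slice and then deriving a sign contradiction, split on $|N^0(i)|\in\{0,1\}$. The only cosmetic difference is that you treat $f_i(x)=0$ and $f_i(x)=1$ uniformly via the parameter $a$, where the paper does $f_i(x)=0$ and declares the other case similar.
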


\begin{proof}
To prove the first property, let $x,y$ be configurations on $V_D$ such that $x\leq^D_i y$. Let $\Delta(x,y)$ be the set of $j\in V_D$ such that $x_j\neq y_j$. We proceed by induction on $|\Delta(x,y)|$, the Hamming distance between $x$ and $y$. If $|\Delta(x,y)|=0$ then $x=y$ so $f_i(x)=f_i(y)$. Otherwise, there exists $j\in \Delta(x,y)$ and we set $z=x\xor e_j$. Since $z_j=y_j$, it is clear that $z\leq^D_i y$. Furthermore, we have $\Delta(z,y)=\Delta(x,y)\setminus \{j\}$ so $f_i(z)\leq f_i(y)$ by induction hypothesis. If $j\not\in N(i)$,  we have  $f_i(x)=f_i(z)\leq f_i(y)$. Otherwise, $j$ is either a positive or a negative in-neighbor of~$i$ (since $x_j\neq y_j$ and $x\leq^D_i y$). If $j$ is a positive in-neighbor, then $x_j<y_j=z_j$ so $f_i(x)\leq f_i(z)$, and we deduce that $f_i(x)\leq f_i(y)$. If $j$ is a negative in-neighbor, then $x_j>y_j=z_j$, so $f_i(x)\leq f_i(z)$ and we have again $f_i(x)\leq f_i(y)$. This concludes the induction step.
	
\medskip
We now prove the second property. Suppose that $f_i(x)=0$ and suppose,
for a contradiction, that there is no non-\Null{} in-neighbor $j$ of $i$
such that $x_j\xor\tilde\sigma_{ji}=0$. This means that
$x_{N^{1}(i)}=1$ and $x_{N^{-1}(i)}=0$ or, equivalently, that $x$ is a
$\leq^D_i$-maximal configuration. If $i$ has no \Null{} in-neighbor, for
any configuration $y$ we have $y\leq^D_i x$ and we deduce from the
first property that $f_i(y)=0$. Hence, $f_i$ is the 0 constant
function, which is a contradiction since $i$ has at least one
in-neighbor. Suppose now that $i$ has a unique \Null{} in-neighbor, say
$k$. For any configuration $y$ with $y_k=x_k$ we have $y\leq^D_i x$
and we deduce from the first property that $f_i(y)=0$, so that
$f_i(y)\leq f_i(y\xor e_k)$. It follows that $k$ is a positive
in-neighbor if $x_k=0$ and a negative in-neighbor if $x_k=1$, a
contradiction. The case $f_i(x)=1$ is similar.
\end{proof}

\section{\boldmath\KMaxFPP{} for $k = 1$\unboldmath} \label{section:max_k=1}

In this Section, we are interested in the problem of deciding if \an{} SID $D$ admits a BN with at least one fixed point. This is the only decision problem we consider for which we do not prove tight complexity bounds. More precisely, Theorem~\ref{theorem:kmaxfpp1} shows that the problem is in $\Poly$, but it remains open to know whether it is $\Poly$-hard.

\begin{theorem} \label{theorem:kmaxfpp1}
\kmaxfpp{1} is in $\Poly$. 
\end{theorem}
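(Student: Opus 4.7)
The plan is to reduce the membership question to a structural condition on $D$ that is checkable in polynomial time, following the two-step strategy outlined in the synopsis. First, I would build in polynomial time a \emph{nice} SID $D'$ (no \Null{} arcs) with $\maxpf(D')\geq 1 \iff \maxpf(D)\geq 1$, and then prove a clean combinatorial characterisation of $\maxpf(D')\geq 1$ in terms of the signed SCC structure of $D'$.

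For the reduction to the nice case, I would treat each \Null{} arc $(j,i)$ by a local gadget. By Proposition~\ref{pro:SID}, if $(j,i)$ is \Null{} then $i$ has either a second \Null{} in-neighbor or total in-degree at least three, which leaves room to split $i$ (or add a small auxiliary subgraph of non-\Null{} arcs) in such a way that (a) every fixed point of a BN realizing $D$ extends to a fixed point of some BN realizing $D'$, and (b) every fixed point of a BN realizing $D'$ projects to a fixed point of a BN realizing $D$. The key point is that at a fixed point, only the value $x_j$ matters in $f_i$, so the non-monotone behaviour captured by a \Null{} arc can be replaced by a monotone realization locally without changing which configurations are fixed. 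This part needs to be done once and for all, and it is the most delicate step, because the sign labelling is a global property of the local function.

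For nice SIDs, I would prove:
\[
\maxpf(D)\geq 1 \iff \text{every non-trivial initial strongly connected component of $D$ contains a positive cycle.}
\]
The forward direction is immediate from Theorem~\ref{thm:basic_results}: if an initial SCC $C$ of $D$ is non-trivial and contains no positive cycle, then since $D$ is nice, $C$ is strongly connected with only negative cycles, so no BN with SID $C$ has a fixed point; because $C$ is initial, the restriction $f_C$ of any $f\in\functions(D)$ depends only on $C$ and lies in $\functions(C)$, so $f$ itself has no fixed point. For the converse, I would build an $f\in\functions(D)$ with a fixed point recursively: in each non-trivial initial SCC, I pick a positive cycle, define the local functions along that cycle (e.g.~using AND/OR gates) so that a constant configuration on the cycle is locally fixed, extend consistently to the rest of the SCC using Lemma~\ref{lem:partial_order}, and then propagate outward through the DAG of SCCs, which is acyclic and hence admits a fixed point by Robert's theorem.

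The condition is then polynomial-time checkable: the initial SCCs of $D$ are computed in linear time, and deciding whether a signed digraph contains a positive cycle is in $\Poly$ (this is the deep graph-theoretic result alluded to in the synopsis, equivalent to the even-cycle problem in digraphs). Combining these three ingredients gives a polynomial-time algorithm. The main obstacle I expect is the first step, the reduction from arbitrary SIDs to nice SIDs, since we have to simultaneously preserve realizability (Proposition~\ref{pro:SID}) and the exact set of fixed points, without introducing spurious ones.
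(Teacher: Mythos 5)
Your overall architecture is the paper's: reduce to \simple{} SIDs, characterise $\maxpf(D)\geq 1$ for \simple{} SIDs by ``every non-trivial initial strongly connected component has a positive cycle'', and decide positive-cycle existence via the even-cycle algorithm of Robertson--Seymour--Thomas/McCuaig. Your steps 2 and 3 are essentially correct as sketched (the forward direction via the third item of Theorem~\ref{thm:basic_results} applied to the initial component is exactly the paper's argument). One imprecision in step 2: after fixing the initial SCCs you cannot invoke Robert's theorem on ``the DAG of SCCs'', because non-initial SCCs may themselves be non-trivial and carry only negative cycles; the paper instead exhibits a spanning subgraph $D'$ of $D$ with the same sources and only positive cycles (inside each non-initial SCC one keeps only a spanning out-tree rooted at a vertex receiving an external arc), applies the second item of Theorem~\ref{thm:basic_results} to get a fixed point $x$ of some $f'\in\functions(D')$, and then realises $x$ as a fixed point of some $f\in\functions(D)$ by choosing AND/OR local functions. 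This is fixable along the lines you indicate, but as written the appeal to acyclicity is not valid.

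The genuine gap is step 1, which you yourself flag as the delicate part but do not carry out. You propose to replace each \Null{} arc by a gadget (splitting $i$ or adding auxiliary vertices) so that fixed points transfer in both directions, justified by ``at a fixed point only the value $x_j$ matters, so the non-monotone behaviour can be replaced by a monotone realization''. That heuristic misses where the difficulty actually lies: the constraint is not on the value of $f_i$ at the fixed point but on the \emph{global} sign pattern of $f_i$ --- in the direction from the \simple{} SID back to $D$ you must produce a local function that genuinely has a non-monotone essential dependency on each \Null{} in-neighbour \emph{and} the prescribed monotone dependencies on the others, while still taking the value $y_i$ at $y$. The paper's Lemma~\ref{lemma:simple_k1} does this with no new vertices at all: it simply deletes all in-coming arcs of $i$ when $i$ has two or more \Null{} in-neighbours (making $i$ a source) and deletes only the \Null{} arc when there is exactly one; the converse direction then requires explicit formulas (an XOR over the \Null{} in-neighbours conjoined with literals, resp.\ a carefully built conjunction of disjunctions), and relies on the second item of Lemma~\ref{lem:partial_order} to guarantee a non-\Null{} in-neighbour $j$ with $y_j\xor\tilde\sigma_{ji}=y_i$. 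Without such an explicit construction, your reduction to the \simple{} case is a statement of the desired properties rather than a proof, so the argument is incomplete at its most critical point.
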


The next lemma shows that we can efficiently transform any SID $D$ into a \simple{} SID $D'$ such that $ \maxpf (D) \geq 1$ if and only if $ \maxpf (D') \geq 1$. As a consequence, in order to prove that \kmaxfpp{1} is in $\Poly$, we can consider that the input SID is \simple{}.

\begin{lemma}\label{lemma:simple_k1}
Let $D$ be \an{} SID and let $D'$ be the \simple{} SID obtained from $D$ by deleting the set of arcs $(j,i)$ such that $i$ has at least two \Null{} in-neighbors in $D$ or such that $j$ is the unique \Null{} in-neighbor of $i$ in $D$. We have 
\[
\maxpf(D) \geq 1
\iff 
\maxpf(D') \geq 1.
\]
\end{lemma}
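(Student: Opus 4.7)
The plan is to prove both directions by transferring a fixed point $x$ between $\functions(D)$ and $\functions(D')$, keeping $x$ itself as the witness fixed point in both settings. Direction $(\Rightarrow)$ amounts to a thinning of $f$, while direction $(\Leftarrow)$ requires fabricating non-monotone dependencies on the zero arcs of $D$ not present in $D'$, without destroying either the monotone dependencies inherited from $f'$ or the equations $f_i(x) = x_i$.

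\textbf{Direction $(\Rightarrow)$.} Start with $f \in \functions(D)$ satisfying $f(x) = x$, and define $f'$ component-wise. If $N^0_D(i) = \emptyset$, take $f'_i := f_i$. If $|N^0_D(i)| \geq 2$, vertex $i$ is a source in $D'$, so take $f'_i$ constantly equal to $x_i$. If $|N^0_D(i)| = 1$ with unique zero in-neighbor $j$, then $|N_D(i)| \geq 3$ by Proposition~\ref{pro:SID}, so Lemma~\ref{lem:partial_order}(\ref{prop2}) applied to $f_i$ at $x$ yields a non-zero in-neighbor $k$ of $i$ with $x_i = x_k \oplus \tilde\sigma_{ki}$; take $f'_i$ to be the OR (resp.\ AND) function on $N_{D'}(i) = N_D(i) \setminus \{j\}$ if $x_i = 1$ (resp.\ $x_i = 0$). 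Since $|N_{D'}(i)| \geq 2$, $f'_i$ depends essentially on each of its inputs with the correct sign, yielding $f' \in \functions(D')$, and the choice of $k$ forces $f'_i(x) = x_i$.

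\textbf{Direction $(\Leftarrow)$.} Start with $f' \in \functions(D')$ satisfying $f'(x) = x$, and build $f \in \functions(D)$ with the same fixed point. For $i$ with $N^0_D(i) = \emptyset$, set $f_i := f'_i$. For $i$ with $|N^0_D(i)| = 1$ and zero in-neighbor $j$, use a multiplexer on $y_j$: Lemma~\ref{lem:partial_order}(\ref{prop2}) applied to $f'_i$ at $x$ gives $k \in N_{D'}(i)$ with $x_i = x_k \oplus \tilde\sigma_{ki}$; define $f_i(y) := y_k \oplus \tilde\sigma_{ki}$ when $y_j = x_j$, and $f_i(y) := \bigvee_{k' \in N_{D'}(i) \setminus \{k\}} (y_{k'} \oplus \tilde\sigma_{k'i})$ (with $\vee$ replaced by $\wedge$ when $x_i = 0$) when $y_j \neq x_j$. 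For $i$ with $|N^0_D(i)| \geq 2$, write $N_D(i) = Z \cup r$ where $Z = N^0_D(i)$ and $r$ gathers the non-zero in-neighbors, and let $H(y_Z) := \bigoplus_{z \in Z} y_z \oplus c$ with the constant $c$ chosen so that $H(x_Z) = x_i$; set $f_i := H$ if $r = \emptyset$, $f_i := H \vee G$ if $x_i = 1$ and $r \neq \emptyset$, or $f_i := H \wedge G$ if $x_i = 0$ and $r \neq \emptyset$, where $G$ is any OR/AND on $r$ with the prescribed signs. A direct case check then shows $D_f = D$ and $f(x) = x$.

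\textbf{Main obstacle.} The substantive part is the converse. For each vertex $i$ where $D$ has strictly more in-neighbors than $D'$, one must craft a local function whose SID at $i$ is \emph{exactly} $N_D(i)$ with the prescribed signs—so the added zero arcs must be genuinely non-monotone, not accidentally monotone or inessential—while also inheriting the correct monotonicities on $N_{D'}(i)$ and satisfying $f_i(x) = x_i$. The multiplexer enforces non-monotonicity on a single zero arc by switching between two incomparable signed monotone branches, and Lemma~\ref{lem:partial_order}(\ref{prop2}) is precisely the tool that allows the $y_j = x_j$ branch to be a signed projection hitting $x_i$ at $x$. For two or more zero arcs, the XOR $H$ is non-monotone in each variable of $Z$ (crucially using $|Z| \geq 2$), admits any prescribed value at $x$ via the constant $c$, and composes cleanly with a monotone part on $r$ via $\vee$ or $\wedge$ without perturbing the signs on $r$.
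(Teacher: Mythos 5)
Your proposal is correct and follows essentially the same route as the paper: the same case split on $|N^0_D(i)|\in\{0,1,\geq 2\}$, the same use of Proposition~\ref{pro:SID} and Lemma~\ref{lem:partial_order}(\ref{prop2}) to pick the witnessing non-\Null{} in-neighbor, an AND/OR choice for the forward direction, and for the converse the same multiplexer-on-the-\Null{}-input and XOR-over-$N^0_D(i)$ constructions (the paper just writes the multiplexer as a single conjunctive formula rather than a two-branch case definition, which is the same function up to the irrelevant choice on the inactive branch). No gaps.
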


\begin{proof}
Suppose that $\maxpf(D)\geq 1$ and let $f \in \functions(D)$ with a fixed point $y$. We define componentwise  a BN $f'\in \functions(D')$ that admits $y$ as fixed point. Let $i$ be any vertex. We consider three cases:
\begin{enumerate}
\item
If $i$ has no \Null{} in-neighbor in $D$, then $i$ has the same in-coming arcs in $D$ and $D'$ thus we can set $f'_i=f_i$, so that $f'_i(y)=f_i(y)=y_i$. 
\item
If $i$ has a unique \Null{} in-neighbor in $D$, then $f'_i$ is defined as the AND
function if $y_i=0$, and the OR function otherwise. Suppose first that
$y_i=0$. Then $f_i(y)=0$ so, by the second property of
Lemma~\ref{lem:partial_order}, there is a non-\Null{} in-neighbor $j$ of $i$
such that $y_j\xor\tilde\sigma_{ji}=0$, where $\sigma$ is the sign function
of $D$. Since $f'_i$ is the AND function, we deduce that $f'_i(y)=0$. We
prove similarly that $f'_i(y)=1$ when $y_i=1$.  
\item
If $i$ has at least two \Null{} in-neighbors in $D$, then $i$ is a source of $D'$ so $f'_i$ has to be a constant function. We then set $f'_i=y_i$, so that $f'_i(y)=y_i$. 
\end{enumerate}
In this way, $f'$ is a BN in $\functions(D')$ which has $y$ as fixed point. Hence, $\maxpf(D')\geq 1$.

\medskip
Conversely, suppose that $\maxpf(D')\geq 1$ and let  $f' \in \functions(D')$ with a fixed point $y$. We define componentwise a BN $f\in \functions(D)$ that admits $y$ as fixed point. Let $i$ be any vertex. We consider three cases:
\begin{enumerate}
\item
If $i$ has no \Null{} in-neighbor in $D$, then $i$ has the same in-coming arcs in $D$ and $D'$ thus we can set $f_i=f'_i$, so that $f_i(y)=f'_i(y)=y_i$.
\item
Suppose that $i$ has a unique \Null{} in-neighbor in $D$, say $k$. Note that $i$
has at least two non-\Null{} in-neighbors in $D$ by Proposition \ref{pro:SID}.
Suppose that $y_i=0$. Then $f'_i(y)=0$ so, by the second property of
Lemma~\ref{lem:partial_order}, there is a non-\Null{} in-neighbor $j$ of $i$
such that $y_j\xor\tilde\sigma_{ji}=0$. We then define $f_i$ by:
\[
f_i(x)=\bigl((x_j \xor \tilde{\sigma}_{ji}\big) \vee (x_k \xor y_k) \bigr) 
\wedge \bigwedge\limits_{ \ell \in N_{D}(i) \setminus \{j,k\} }  \bigl((x_\ell \xor \tilde{\sigma}_{\ell i}) \vee (x_k \xor \neg y_k) \bigr).
\]
It is easy to check that $f_i \in \functions_i(D)$ and, since the first term of the conjunction vanishes for $x=y$, we have $f_i(y)=0=y_i$. If $y_i=1$ we prove similarly that there is $f_i\in \functions_i(D)$ with $f_i(y)=1$ (in that case, there is a non-\Null{} in-neighbor $j$ such that $y_j\xor \tilde\sigma_{ji}=1$ and $f_i$ is defined as above by swapping $\wedge$ and $\vee$, and by swapping $y_k$ and $\neg y_k$).
\item
Suppose that $i$ has at least two \Null{} in-neighbors in $D$. If $y_i=0$ we define $f_i$ by:
\[
f_i(x) =
\left(\bigoplus\limits_{j \in N^0_D(i)} x_j\xor y_j \right) \wedge
\bigwedge\limits_{j \in N_D(i) \setminus N^0_D(i)} (x_j \xor \tilde\sigma_{ji}).
\] 
It is easy to check that $f_i \in \functions_i(D)$ and, since the first term of
the conjunction vanishes for $x=y$, we have $f_i(y)=0=y_i$. The case
$y_i=1$ is symmetric (with $\vee$ instead of $\wedge$ and $\neg y_j$
instead of $y_j$). 
\end{enumerate}
In this way, $f$ is a BN in $\functions(D)$ which has $y$ as fixed point. Hence, $\maxpf(D)\geq 1$.
\end{proof}


We now give a graph-theoretical characterization of the \simple{} SIDs $D$ such that $\maxpf(D) \geq 1$. We need some additional definitions. A strongly connected component $H$ in a signed digraph $D$ is {\em trivial} if it has a unique vertex and no arc, and {\em initial} if $D$ has no arc $(i,j)$ where $j$ is in $H$ but not $i$. 

\begin{lemma}\label{lemma:kmaxfpp1_eq_evencycle}
Let $D$ be a \simple{} SID. We have $\maxpf(D) \geq 1$ if and only if each non-trivial initial strongly connected component of $D$ has a positive cycle. 
\end{lemma}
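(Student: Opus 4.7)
The plan is to handle the two implications separately, using a restriction argument for necessity and an explicit construction for sufficiency.

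For the forward direction, suppose $\maxpf(D)\geq 1$ and fix $f\in\functions(D)$ with a fixed point $x$. Let $H$ be any non-trivial initial strongly connected component. Because $H$ is initial, every vertex of $V_H$ has all of its $D$-in-neighbors inside $V_H$, so restricting $f$ to the coordinates in $V_H$ (the values on $V_H$ do not depend on the complement) yields a BN $f'\in\functions(H)$ with fixed point $x_{V_H}$. If $H$ had only negative cycles, Theorem~\ref{thm:basic_results}(3) would force $\maxpf(H)<1$, a contradiction; hence $H$ contains a positive cycle.

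For the converse, I would construct $x\in\bool^{V_D}$ together with local functions $f_i$ realizing $x$ as a fixed point of $f=(f_i)_{i\in V_D}\in\functions(D)$. Call an arc $(j,i)$ \emph{$x$-consistent} when $x_j\xor\tilde\sigma_{ji}=x_i$. The key local observation is that if $i$ has at least one in-neighbor and at least one $x$-consistent incoming arc, then $f_i\in\functions_i(D)$ with $f_i(x)=x_i$ can be realized by
\[
f_i(z)=\bigwedge_{k\in N_D(i)}(z_k\xor\tilde\sigma_{ki})\ \text{ if }\ x_i=0,\qquad
f_i(z)=\bigvee_{k\in N_D(i)}(z_k\xor\tilde\sigma_{ki})\ \text{ if }\ x_i=1.
\]
Both formulas essentially depend on every in-neighbor with the correct monotonicity (encoded through $\tilde\sigma$), and the $x$-consistent witness supplies a normalized input equal to $0$ (annihilating AND) or $1$ (firing OR), yielding $f_i(x)=x_i$. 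Sources $i$ simply receive the constant $f_i\equiv x_i$.

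It then remains to choose $x$ so that every non-source vertex carries at least one $x$-consistent incoming arc. In each non-trivial initial SCC $H$, pick a positive cycle $C_H$ and define $x$ on $V(C_H)$ by walking around $C_H$ and flipping the state each time a negative arc is crossed; positivity of $C_H$ guarantees consistency after one full turn. On each trivial initial SCC assign $x$ arbitrarily, and let $R$ collect all vertices assigned so far. Since the condensation of $D$ is a DAG whose sources are exactly the initial SCCs, every vertex of $D$ is reachable from $R$ along arcs; a BFS from $R$ supplies every $i\notin R$ with a parent $p(i)$ already assigned with $(p(i),i)\in A_D$, and setting $x_i=x_{p(i)}\xor\tilde\sigma_{p(i)i}$ makes $(p(i),i)$ $x$-consistent. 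Vertices on each $C_H$ are witnessed by their cyclic predecessor, so every non-source vertex is equipped with an $x$-consistent incoming arc, and the local construction above assembles the desired $f\in\functions(D)$.

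The main subtlety is checking that the AND/OR formula produces a function whose SID-in-neighborhood is exactly $N_D(i)$ with the prescribed signs, which relies on $D$ being nice (no zero arcs) and on the fact that AND and OR essentially depend on every input. The argument must also cover uniformly the case of in-degree one (where AND and OR coincide and the $x$-consistency hypothesis becomes indispensable) and non-trivial initial SCCs reduced to a single vertex with a positive loop.
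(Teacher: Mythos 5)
Your proof is correct and follows the same overall strategy as the paper's, with two small variations worth noting. For the forward direction, you restrict $f$ to an initial component $H$ and invoke Theorem~\ref{thm:basic_results}(3) directly; the paper instead extracts, via Lemma~\ref{lem:partial_order}(2), a spanning subgraph of $H$ in which every vertex has in-degree one and every arc is consistent with the fixed point, and finds the positive cycle there -- but the paper itself remarks that the implication is ``an easy consequence'' of Theorem~\ref{thm:basic_results}(3), so your restriction argument is exactly that shortcut made explicit (it is valid because $H$ initial means the local functions on $V_H$ depend only on $V_H$, so the restriction lies in $\functions(H)$). For the converse, both proofs end with the identical AND/OR construction keyed on a consistent witness arc; the difference is how the configuration $x$ and the witnesses are produced. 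The paper builds a spanning subgraph $D'$ with only positive cycles and the same sources, takes any $f'\in\functions(D')$, and gets $x$ from Theorem~\ref{thm:basic_results}(2); you instead construct $x$ by hand, propagating values around a chosen positive cycle in each non-trivial initial component and then down a BFS forest, which makes the converse self-contained (no appeal to Theorem~\ref{thm:basic_results}(2)) at the cost of a slightly longer verification that every non-source vertex acquires a consistent in-arc. Your reachability claim is sound because every SCC is reachable in the condensation from an initial one, and within a non-trivial initial component every vertex is reachable from the chosen cycle by strong connectivity; niceness of $D$ is indeed what guarantees the AND/OR functions realize exactly the prescribed signed in-neighborhood.
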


\begin{proof}
The left to right implication has been proven by Aracena
in~\cite[Corollary~3]{A08} and is an easy consequence of the third item in
Theorem~\ref{thm:basic_results}. We present a version rewritten with the
notations of this paper. Let $\sigma$ be the sign function of $D$, and let $f
\in \functions(D)$  with a fixed point $x$. Consider an arbitrary non-trivial
initial strongly connected component $H$ of $D$, and let us prove that $H$
has a positive cycle. Since $H$ is non-trivial, by the second property of
Lemma~\ref{lem:partial_order}, each vertex $i$ in $H$ has a non-\Null{}
in-neighbor $j$ such that $x_j\xor\tilde\sigma_{ji}=f_i(x)=x_i$,  and $j$ is
necessarily in $H$ since $H$ is initial. We deduce that $H$ has a spanning
subgraph $H'$ in which each vertex is of in-degree one, and such that
$x_j\xor\tilde\sigma_{ji}=x_i$ for any arc $(j,i)$ of $H'$. Hence, for any
vertices $i,j$ in $H$, if $x_j=x_i$ then any walk in $H'$ from $j$ to $i$
visits an even number of negative arcs. In particular, any cycle of $H'$ is
positive. Since $H'$ has no source, $H'$ has a cycle, which is positive.
Thus, $H$ has a positive cycle as desired.

\medskip
Conversely, suppose that each non-trivial initial strongly connected component
of $D$ has a positive cycle. Then it is easy to see that $D$ has a spanning
subgraph $D'$ with only positive cycles, and with the same sources as $D$. Let
any $f'\in \functions(D')$. By the second item of Theorem
\ref{thm:basic_results}, $f'$ has at least one fixed point, say $x$. We then
define $f\in \functions(D)$ componentwise as follows. For any vertex $i$, if
$i$ is a source, then we set $f_i=x_i$, so that $f_i(x)=x_i$. Otherwise, by the
second property of Lemma~\ref{lem:partial_order}, $i$ has an in-neighbor $j$ in
$D'$ such that $x_j\xor\tilde\sigma_{ji}=f'_i(x)=x_i$. We then define $f_i$ as
the AND function if $x_j\xor\tilde\sigma_{ji}=x_i=0$ and the OR function if
$x_j\xor\tilde\sigma_{ji}=x_i=1$. It is then clear that $f_i(x)=x_i$. Hence,
$x$ is a fixed point of $f$.\end{proof}

\begin{remark}
If $D$ is not \simple{}, we can have $\maxpf(D)=0$ even if each non-trivial initial strongly connected component of $D$ has a positive cycle. Indeed, let $D$ and $D'$ be the following SIDs:
\[
D
\begin{array}{c}
\begin{tikzpicture}
\node[outer sep=1,inner sep=2,circle,draw,thick] (1) at (150:1){$1$};
\node[outer sep=1,inner sep=2,circle,draw,thick] (2) at (30:1){$2$};
\node[outer sep=1,inner sep=2,circle,draw,thick] (3) at (270:1){$3$};
\draw[Green,->,thick] (1.128) .. controls (140:1.8) and (160:1.8) .. (1.172);
\draw[red,-|,thick] (2.52) .. controls (40:1.8) and (20:1.8) .. (2.8);
\path[->,thick]
(1) edge[bend left=15] (2)
(2) edge[Green,bend left=15] (3)
(3) edge[Green,bend left=15] (1)
(3) edge[red,-|,bend left=15] (2)
;
\end{tikzpicture}
\end{array}
\qquad\qquad
D'
\begin{array}{c}
\begin{tikzpicture}
\node[outer sep=1,inner sep=2,circle,draw,thick] (1) at (150:1){$1$};
\node[outer sep=1,inner sep=2,circle,draw,thick] (2) at (30:1){$2$};
\node[outer sep=1,inner sep=2,circle,draw,thick] (3) at (270:1){$3$};
\draw[Green,->,thick] (1.128) .. controls (140:1.8) and (160:1.8) .. (1.172);
\draw[red,-|,thick] (2.52) .. controls (40:1.8) and (20:1.8) .. (2.8);
\path[->,thick]
(2) edge[Green,bend left=15] (3)
(3) edge[Green,bend left=15] (1)
(3) edge[red,-|,bend left=15] (2)
;
\end{tikzpicture}
\end{array}
\]
The black arrow from vertex $1$ to vertex $2$ represents a \Null{} arc, thus $D$
is not \simple{}. Furthermore, $D$ is strongly connected and has a positive
cycle: the positive loop on vertex $1$. However, $\maxpf(D)=0$. Indeed, by
Lemma~\ref{lemma:simple_k1}, $\maxpf(D)\geq 1$ if and only if $\maxpf(D')\geq
1$. However, since $D'$ is \simple{} and has a non-trivial initial strongly
connected component with only negative cycles, that containing vertices 2 and
$3$, by Lemma~\ref{lemma:kmaxfpp1_eq_evencycle} we have $\maxpf(D')=0$ and
thus $\maxpf(D)=0$. 
\end{remark}

The last ingredient for the proof of Theorem~\ref{theorem:kmaxfpp1} is a difficult result independently proven by Robertson, Seymour and
Thomas in \cite{RST99}, and McCuaig in \cite{McC04}. 

\begin{theorem} [\cite{McC04,RST99}] \label{th:McCuaig}
There is a polynomial time algorithm to decide if a given digraph contains a cycle of even length.
\end{theorem}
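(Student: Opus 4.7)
The plan is to reduce the even directed cycle problem to a problem about Pfaffian orientations of bipartite graphs, and then to exploit a structural characterization that can be verified in polynomial time. Given a digraph $G$ on vertex set $V$, I would build the bipartite ``splitting'' graph $B$ on vertex classes $V^-$ and $V^+$ (two disjoint copies of $V$) whose edges are the ``identity'' pairs $\{v^-,v^+\}$ for $v\in V$, together with the pairs $\{u^-,v^+\}$ for every arc $(u,v)$ of $G$. Standard determinantal manipulations show that $G$ has a cycle of even length if and only if $B$ is \emph{not} Pfaffian, i.e., does not admit an orientation in which every ``central'' even cycle (a cycle whose removal leaves a subgraph with a perfect matching) has an odd number of arcs in each of its two directions. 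Hence, recognizing even directed cycles reduces in polynomial time to recognizing Pfaffian bipartite graphs.

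The second step is to decompose $B$ into its ``braces'': the $3$-connected pieces in the standard tight-cut decomposition for bipartite graphs with a perfect matching. This decomposition can be computed in polynomial time via matching-based tight-cut extraction, and it has the property that $B$ is Pfaffian if and only if each of its braces is Pfaffian. So the problem reduces further to deciding whether a given brace is Pfaffian.

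The third and crucial step is the deep structure theorem of \cite{RST99} and \cite{McC04}: a brace is Pfaffian if and only if it is planar or isomorphic to a specific sporadic exception (the Heawood graph / the brace $K_{3,3}^*$ in McCuaig's formulation, up to the appropriate equivalence). Both tests can be performed in polynomial time: planarity in linear time by Hopcroft--Tarjan, and isomorphism to a fixed graph of constant size trivially. Combining the three steps yields the polynomial-time algorithm.

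The main obstacle, by a wide margin, is the structure theorem in the third step; the reduction to Pfaffian bipartite graphs and the tight-cut decomposition into braces are classical, but the classification of Pfaffian braces is a long and intricate case analysis that occupies essentially the whole of \cite{RST99} and \cite{McC04}. I would not expect to shortcut it in a fresh proof: the best a proposal can do is isolate the right reduction and brace-decomposition framework and then invoke the classification as a black box. For the purposes of applying Theorem~\ref{th:McCuaig} in this paper, only the algorithmic statement is needed, so I would quote the structure theorem and use it to derive the polynomial-time test directly, as sketched above.
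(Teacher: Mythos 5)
The paper does not actually prove this statement: Theorem~\ref{th:McCuaig} is imported verbatim from \cite{McC04,RST99}, so there is no in-paper proof to compare yours against. Your outline does follow the route actually taken in those references: the polynomial-time equivalence (Vazirani--Yannakakis, building on Little) between the existence of an even directed cycle in $G$ and the non-Pfaffianity of the bipartite split graph $B$, the reduction via the tight-cut decomposition to braces, and finally the structure theorem for Pfaffian braces, whose proof is indeed the bulk of both papers and can only be cited, not reproduced.

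One inaccuracy is worth flagging. The structure theorem does \emph{not} say that a brace is Pfaffian if and only if it is planar or the Heawood graph; it says that a brace is Pfaffian if and only if it is the Heawood graph or can be obtained from planar braces by repeated application of the trisum (4-cycle sum) operation, and non-planar Pfaffian braces other than the Heawood graph do exist. Consequently the test you describe in the third step --- a planarity check plus a constant-size isomorphism check on each brace --- would wrongly reject some Pfaffian inputs; the algorithm of \cite{McC04} performs a further decomposition along 4-cycles before the planarity tests. Also, $K_{3,3}$ is the excluded matching minor characterizing non-Pfaffian bipartite graphs (Little's theorem), not a sporadic Pfaffian exception, so your parenthetical conflates two different statements. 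Since you invoke the classification as a black box anyway, the fix is simply to quote it correctly; none of this affects how Theorem~\ref{th:McCuaig} is used in the paper.
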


\begin{proof}[Proof of Theorem~\ref{theorem:kmaxfpp1}]
As a consequence of Lemmas~\ref{lemma:simple_k1}
and~\ref{lemma:kmaxfpp1_eq_evencycle}, to decide if $\maxpf(D)\geq 1$, it is
sufficient to compute the non-trivial initial strongly connected components
of the \simple{} SID $D$ (this can be done in linear time \cite{T72}) and to
check if each of them contains a positive cycle. Using the following
transformation, the algorithm from Theorem~\ref{th:McCuaig} can be used to
perform this in polynomial time.
  
\medskip
Let $D$ be a signed digraph with $n$ vertices, and let $\tilde D$ be the digraph obtained from $D$ by replacing each positive arc by a path of length two (with one new vertex), and each negative arc by a path of length one. Then $\tilde D$ has at most $n+n^2$ vertices, and $D$ has a positive cycle if and only if $\tilde D$ has a cycle of even length (this transformation also appears in \cite{MAG08}). This concludes the proof of Theorem~\ref{theorem:kmaxfpp1}.
\end{proof}

\section{\boldmath\KMaxFPP{} for $k \geq 2$\unboldmath} \label{section:max_k>=2}

In this section we study \kmaxfpp{k} for $k \geq 2$ and prove the following complexity result. 

\begin{theorem}
	\label{thm:NP-complete} 
\kmaxfpp{k} is $\NP$-complete for every $k\geq 2$, even when restricted to SIDs $D$ such that $\Delta(D)\leq 2$.
\end{theorem}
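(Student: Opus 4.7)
The plan is to show $\NP$ membership via a natural polynomial-size certificate, then prove $\NP$-hardness for $k=2$ by a reduction from $\threeSAT$ following the three-stage construction $H_\psi \to H'_\psi \to D_\psi$ described in the synopsis, and finally to bootstrap both to arbitrary fixed $k\geq 2$ and to the restriction $\Delta(D)\leq 2$.

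For membership, a polynomial-size witness will be a list of $k$ distinct configurations $x^{(1)},\dots,x^{(k)}$ on $V_D$. For each vertex $i$ independently, I verify whether some $f_i\in\functions_i(D)$ satisfies $f_i(x^{(j)})=x^{(j)}_i$ for all $j\in\{1,\dots,k\}$. Because $k$ is constant, there are only $k$ value-constraints on $f_i$; combined with the sign constraints imposed by $D$ (prescribed monotonicity at each non-\Null{} in-neighbor, genuine non-monotonicity at each \Null{} in-neighbor) and the essential-dependence requirement, this local feasibility question is decidable in polynomial time by standard monotone-extension arguments.

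For $\NP$-hardness at $k=2$, I follow the three stages of the synopsis. From a $3$-CNF formula $\psi$ with variables $x_1,\dots,x_n$ and clauses $C_1,\dots,C_m$, I first build a strongly connected, full-positive SID $H_\psi$ containing $2n$ literal vertices $\ell_r^+,\ell_r^-$ and $m$ clause vertices $c_1,\dots,c_m$, arranged so that every cycle of $H_\psi$ visits all the $c_s$. Then $\tau^+(H_\psi)=1$, so Theorem~\ref{thm:bound} gives $\maxpf(H_\psi)\leq 2$, while item~4 of Theorem~\ref{thm:basic_results} gives $\minpf(H_\psi)\geq 2$; hence $\maxpf(H_\psi)=\minpf(H_\psi)=2$, with the two fixed points being $\boldsymbol{0}$ and $\boldsymbol{1}$. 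Second, I form $H'_\psi$ by adding, for each clause $C_s$, a fresh vertex $\mu_s$ together with positive arcs from the literals of $C_s$ to $\mu_s$ and a negative arc from $\mu_s$ to $c_s$; invoking Lemmas~\ref{lem:positive_feedback_lemma} and~\ref{lem:partial_order} I show $\maxpf(H'_\psi)<2$, because the negative arc forces any putative second fixed point to violate the propagation forced by the positive cycles through $c_s$. Third, I form $D_\psi$ by appending sources $\lambda_1,\dots,\lambda_n$ with a positive arc $\lambda_r\to\ell_r^+$ and a negative arc $\lambda_r\to\ell_r^-$. The key lemma to establish is $\maxpf(D_\psi)\geq 2$ iff $\psi$ is satisfiable: from two distinct fixed points of some $f\in\functions(D_\psi)$ the constants read off at the $\lambda_r$'s form an assignment which the $\mu_s$-gadgets, analyzed via Lemma~\ref{lem:partial_order}, force to satisfy every clause; conversely, from any satisfying assignment I explicitly construct an $f\in\functions(D_\psi)$ with two fixed points by fixing the $\lambda_r$'s according to the assignment and copying the behaviour of a BN realizing the two fixed points of $H_\psi$ on the $H$-part.

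For the in-degree bound, since $\psi$ is $3$-CNF the construction already gives $\Delta(D_\psi)\leq 3$, and a standard splitting gadget (replacing each in-neighborhood of size $3$ by a binary tree of auxiliary AND/OR-type vertices) reduces this to $\Delta\leq 2$ while preserving the fixed-point count. For arbitrary fixed $k\geq 2$, I append $\lceil\log_2 k\rceil-1$ fresh vertices, each carrying a positive self-loop and no other in-arc; such a vertex admits a unique local function, the identity, and therefore contributes a free binary factor to $\phi(f)$ at each fixed point, so the new SID $D'_\psi$ satisfies $\maxpf(D'_\psi)\geq k$ iff $\maxpf(D_\psi)\geq 2$. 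The chief technical obstacle is the combinatorial design of $H_\psi$ — a polynomial-size, strongly connected, full-positive SID whose cycles all traverse $c_1,\dots,c_m$ — together with the careful verification, via repeated use of Lemma~\ref{lem:partial_order}, that the $\mu_s$-gadgets translate clause satisfaction into the existence of a second fixed point of $D_\psi$ distinct from a fixed point derived from the canonical one of $H_\psi$.
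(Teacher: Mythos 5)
Your overall route is the same as the paper's (the construction you sketch is exactly the paper's $D_\psi$ of Definition~\ref{def:D_psi}, followed by the same degree-reduction and the same $\lceil\log_2 k\rceil-1$ positive-loop padding), but two steps are asserted in a form that does not hold as stated. The more serious one is the hard direction of your key lemma. You claim that from two distinct fixed points of some $f\in\functions(D_\psi)$ ``the constants read off at the $\lambda_r$'s form an assignment which the $\mu_s$-gadgets \dots{} force to satisfy every clause.'' That specific assignment need not satisfy $\psi$. The adversary choosing $f$ is free to take $f_{\ell_r}$ to be either the AND or the OR function, and this choice effectively swaps which of the two literal vertices of variable $\lambda_r$ behaves as the ``true'' one: in the paper's analysis (Lemma~\ref{lem:psi_2}), when the fixed literal feeding a clause vertex is $\lambda_r^+$ with $z_{\lambda_r}=0$, one deduces that $f_{\ell_r}$ is the OR function, and it is the \emph{twisted} assignment $x_{\LAMBDA}\xor\epsilon(f)$ --- where $\epsilon(f)$ records the AND/OR choice at each $\ell_r$ --- that satisfies $\psi$, not $x_{\LAMBDA}$ itself. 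The reduction still works because satisfiability is preserved under the twist, but your stated intermediate claim is false and the argument must be routed through $\epsilon(f)$; this is precisely the content the paper adds on top of Lemmas~\ref{lem:pseudo_monotone} and~\ref{lem:constant_vertex}.

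The second issue is your $\NP$-membership certificate. A list of $k$ fixed points alone does not obviously suffice: the per-vertex feasibility question is not just monotone consistency of the $k$ point constraints, because $f_i$ must also depend \emph{essentially} on every in-neighbor with the prescribed sign (and non-monotonically on \Null{} ones). For example, if $i$ has a single positive in-neighbor $j$, the unique admissible local function is $f_i(x)=x_j$, and a constraint $f_i(x)=0$ with $x_j=1$ is infeasible even though no pair of constraints violates monotonicity. You wave at ``standard monotone-extension arguments'' for deciding this in polynomial time, including the simultaneous realizability of essential-dependence witnesses for all arcs; that needs an actual argument. The paper avoids the difficulty by enlarging the certificate: it additionally guesses, for each arc, a configuration witnessing its sign, and then the verification reduces to the pairwise check that no $y\in T_i$ satisfies $y\leq^D_i x$ for some $x\in F_i$ (Lemma~\ref{lem:NP}). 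Either fix --- supply the deterministic feasibility algorithm, or add the arc witnesses to the certificate --- closes this. Your degree-reduction step should likewise be conditioned as in Lemma~\ref{lem:delta_reduction}: splitting a large in-neighborhood preserves $\maxpf$ only because an optimal $f$ can be chosen with the OR function at every high-in-degree vertex; it is not a generic fixed-point-preserving gadget.
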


We first prove the upper bound and then the lower bound.

\subsection{Upper bound} 

\begin{lemma} \label{lem:NP}
\kmaxfpp{k} is in $\NP$ for any $k\geq 2$.
\end{lemma}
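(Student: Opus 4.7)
The plan is to produce a succinct polynomial-size certificate and verify it in polynomial time. The certificate I would use consists of the $k$ candidate fixed points $x^1,\ldots,x^k\in\bool^{V_D}$ together with, for each vertex $i\in V_D$ and each in-neighbor $j\in N_D(i)$, auxiliary witness configurations certifying the essential sign-matching dependence of $f_i$ on $j$: one pair $(y,y\xor e_j)$ with prescribed outputs for $j\in N^1_D(i)\cup N^{-1}_D(i)$, and two such pairs (one increasing, one decreasing) for $j\in N^0_D(i)$. The total certificate size is $O(|V_D|(k+|V_D|))$, hence polynomial.

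The verifier processes each vertex $i\in V_D$ independently. It gathers all input--output constraints on $f_i$ (the $k$ fixed-point constraints $f_i(x^j)=x^j_i$ plus the $O(|V_D|)$ witness constraints), projects the inputs onto $\bool^{N_D(i)}$, and checks in polynomial time: (i) no two constraints agree on the projected input but disagree on the output; (ii) the constraints are $\leq^D_i$-compatible, i.e., there is no pair $(a,1),(a',0)$ with $a\leq^D_i a'$, which is the necessary condition given by Lemma~\ref{lem:partial_order}; (iii) the witness pairs actually instantiate the prescribed sign on each arc of $N_D(i)$. If every check passes, we accept.

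To see completeness, define $g_i:\bool^{N_D(i)}\to\bool$ as the minimum $\leq^D_i$-monotone extension of the $1$-labeled constraints, namely $g_i(z)=1$ iff some constraint $(a,1)$ satisfies $a\leq^D_i z$, and set $f_i(x)=g_i(x_{N_D(i)})$. Condition (ii) makes $g_i$ consistent with every constraint (including the $0$-labeled ones), and combined with $\leq^D_i$-monotonicity the witnesses of (iii) force $g_i$ to have the prescribed sign-matching essential dependence on each element of $N_D(i)$; hence $f_i\in\functions_i(D)$ and $f$ admits $x^1,\ldots,x^k$ as fixed points. The converse direction, extracting a certificate from any $f\in\functions(D)$ with at least $k$ fixed points, is routine: pick $k$ fixed points and read the witnesses off $f$'s local structure.

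The one delicate point is step (iii) for null in-neighbors, where the non-monotonicity required by $\sigma_{ji}=0$ must be certified by two witness pairs rather than one (and one must check that the two directions coexist with all other constraints). Once these are folded into the constraint set, conditions (i) and (ii) are exactly what is needed to run the extension argument above; every other step is essentially bookkeeping in the spirit of Lemma~\ref{lem:partial_order}.
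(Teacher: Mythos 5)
Your proposal is correct and takes essentially the same route as the paper's proof: the certificate is the $k$ fixed points plus one sign-witness pair per positive/negative arc and two per \Null{} arc, the acceptance test is exactly the $\leq^D_i$-compatibility condition of Lemma~\ref{lem:partial_order} (your check (i) is subsumed by (ii), and (iii) is enforced in the paper by how the witness pairs are constructed), and the soundness argument uses the same minimum $\leq^D_i$-monotone extension of the $1$-labeled constraints to realize a BN in $\functions(D)$. No gaps.
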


\begin{proof}
Let $D$ be \an{} SID. Suppose that $k \leq 2^{|V_D|}$, otherwise
$D$ is obviously a false instance. The algorithm we consider is fairly
simple. It first guesses $k$ fixed points and, for each non-negative (resp.
non-positive) arc $(j,i)$ in $D$, it guesses a configuration in which an
increase of component $j$ produces an increase (rep. decrease) of the local
function $f_i$. It finally checks that these non-deterministic guesses do not
contradict the first property of Lemma~\ref{lem:partial_order}. If so, this
partial knowledge of the local functions can be extended into a BN in
$\functions(D)$ with the $k$ guessed fixed points, \ie{} $D$ is a true
instance. More precisely, the algorithm is as follows:
\begin{enumerate}
\item 
Choose non-deterministically $k$ distinct configurations $x^1,\dots,x^k$ on $V_D$ and, for every $i\in V_D$, compute the sets $F_i=\{x^\ell\,|\,x^\ell_i=0,\ell\in [k]\}$ and $T_i=\{x^\ell\,|\,x^\ell_i=1,\ell\in [k]\}$.
\item 
For each non-negative arc $(j,i)$ of $D$, choose non-deterministically a configuration $x^{ji+}$ on $V_D$ with $x^{ji+}_j=0$. Then add $x^{ji+}$ in the set $F_i$ and $x^{ji+}\xor e_j$ in the set $T_i$. 
\item 
For each non-positive arc $(j,i)$ of $D$, choose non-deterministically a configuration $x^{ji-}$ on $V_D$ with $x^{ji-}_j=0$. Then add $x^{ji-}$ in the set $T_i$ and $x^{ji-}\xor e_j$ in the set $F_i$.
\item 
Accept if and only if there is no $i\in V_D$, $x\in F_i$ and $y\in T_i$ such that $y\leq^D_i x$.  
\end{enumerate}

\medskip
This algorithm runs in $O(|V_D|\Delta(D)^3)$, which is actually the time complexity of the last item, the most consuming one. Indeed, we have $|F_i||T_i|\leq (\Delta(D)+k)^2$, and the relation $y\leq^D_i x$ can be checked in $O(\Delta(D))$. 

\medskip
We will prove that there is an accepting branch if and only if $\maxpf(D)\geq k$.  

\medskip
Suppose first that there is $f \in \functions(D)$ with at least $k$ fixed points.
One can consider the following execution. First, the configurations
$x^1,\dots,x^k$ chosen in the first step are fixed points of $f$. Second, for
each non-negative arc $(j,i)$ of $D$, the configuration $x^{ji+}$ chosen in
the second step is such that $f_i(x^{ji+}) < f_i(x^{ji+}\xor e_j)$; this
configuration exists since $D$ is the SID of $f$. Third, for each
non-positive arc $(j,i)$, the configuration $x^{ji-}$ chosen in the third
step is such that $f_i(x^{ji-}) > f_i(x^{ji-}\xor e_j)$; this configuration
exists since $D$ is the SID of $f$. Hence, for all $i\in V_D$, $x\in F_i$ and
$y\in T_i$ we have $f_i(x)=0$ and $f_i(y)=1$ and we deduce from the first
property of Lemma~\ref{lem:partial_order} that $y\not\leq^D_i x$, so the
algorithm as an accepting branch.

\medskip
For the other direction, suppose that the algorithm has an accepting branch. Let us prove that there is $f \in \functions(D)$ with at least $k$ fixed points. For each $i \in V_D$, we define $f_i$ has follows: for all configurations $x$ on $V_D$,
	\[
	f_i(x)=\left\{
	\begin{array}{ll}
	1&\text{if there is $y\in T_i$ such that $y\leq^D_i x$},\\
	0&\text{otherwise.}
	\end{array}
	\right.
	\]
Clearly, $f_i(x)=1$ if $x\in T_i$. Furthermore, if $x\in F_i$ then, since we are considering an accepting branch, there is no $y\in T_i$ such that $y\leq^D_i x$, and we deduce that $f_i(x)=0$. Consequently, $f_i(x^\ell)=x^\ell_i$ for $1\leq \ell\leq k$. Thus, $x^1,\dots,x^k$ are fixed points of $f$. 

\medskip
It remains to prove that the SID $D_f$ of $f$ is equal to $D$. Suppose first that $D_f$ has a non-negative arc $(j,i)$. Then there is a configuration $x$ with $x_j=0$ such that $f_i(x)<f_i(x\xor e_j)$. We deduce that $x \not\leq^D_i y \leq^D_i x\xor e_j$ for some $y\in T_i$. So $(j,i)$ is an arc of $D$ and, since $x_j\leq y_j$ and $y \leq^D_i x\xor e_j$, if this arc is negative then $y\leq^D_i x$, a contradiction. Thus, $(j,i)$ is a non-negative arc of $D$. We prove similarly that every non-positive arc of $D_f$ is a non-positive arc of $D$. Now, suppose that $D$ has a non-negative arc $(j,i)$ and let $x=x^{ji+}$ be the corresponding configuration chosen in the second step. We have $x\in F_i$ and  $x\xor e_j\in T_i$, so $f_i(x)<f_i(x\xor e_j)$ and since $x_j=0$ we deduce that $(j,i)$ is a non-negative arc of $D'$. We prove similarly that every non-positive arc of $D$ is a non-positive arc of $D_f$. This proves that $D_f=D$. 
\end{proof}

\subsection{Lower bound}

We now prove the lower bound. We first show that the problem can be reduced to $k=2$: by the following lemma, \kmaxfpp{2} is as hard as \kmaxfpp{k} for all $k>2$. 

\begin{lemma}\label{lem:k_to_2}
Let $k>2$ and let $D$ be any SID. Let $D'$ be the SID obtained from $D$ by adding $\lceil \log_2 k \rceil-1$ new vertices and a positive loop on each new vertex. We have 
\[
\maxpf(D')\geq k \iff \maxpf(D)\geq 2.
\]
\end{lemma}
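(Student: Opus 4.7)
The plan is to exploit the fact that each new vertex in $D'$ is a source of the rest of the digraph (it has exactly one in-neighbor, itself, via a positive loop), so its local function is forced, and it contributes independently a factor of $2$ to the fixed-point count.

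More concretely, let $m = \lceil \log_2 k \rceil - 1$ and let $U$ be the set of $m$ added vertices. By Proposition \ref{pro:SID}, $D'$ is a valid SID (each $u\in U$ has $|N^0(u)|=0$ and $|N(u)|=1$, while the $V_D$-part of $D'$ is just $D$). For any $f'\in \functions(D')$ and $u\in U$, since $u$ has a unique in-neighbor (itself, with positive sign) and no \Null{} in-neighbor, the only possible local function is $f'_u(x)=x_u$. Moreover, no vertex of $V_D$ has $u$ as an in-neighbor, so for $i\in V_D$, $f'_i(x)$ depends only on $x_{V_D}$. Hence each $f'\in\functions(D')$ is in bijection with some $f\in\functions(D)$ via $f_i(y) = f'_i(y,\cdot)$ for $i\in V_D$, and conversely every $f\in\functions(D)$ lifts uniquely to an $f'\in\functions(D')$ by setting $f'_u(x)=x_u$ for $u\in U$.

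From this decomposition, a configuration $x$ on $V_{D'}$ is a fixed point of $f'$ if and only if $x_{V_D}$ is a fixed point of $f$ (the $U$-coordinates being unconstrained since $f'_u(x)=x_u$ is automatically satisfied). Therefore $\phi(f')=2^m\cdot \phi(f)$, which immediately gives
\[
\maxpf(D') = 2^m \cdot \maxpf(D).
\]

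It then remains to verify the arithmetic equivalence $2^m\cdot\maxpf(D)\geq k \Leftrightarrow \maxpf(D)\geq 2$. Since $\maxpf(D)$ is an integer, the left inequality amounts to $\maxpf(D)\geq \lceil k/2^m\rceil$, so it suffices to check that $\lceil k/2^m\rceil = 2$ when $k>2$. With $m=\lceil\log_2 k\rceil-1$ and $k>2$ we have $m\geq 1$ and $2^m < k \leq 2^{m+1}$, hence $1 < k/2^m \leq 2$ and $\lceil k/2^m\rceil=2$, as required. There is no real obstacle here; the only point to be a little careful about is confirming that $D'$ is a legitimate SID and that the added loops cannot be ``merged'' with anything in $D$, but both follow from the fact that $U$-vertices are topologically isolated from $V_D$ apart from their own loop.
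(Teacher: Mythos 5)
Your proof is correct and follows essentially the same route as the paper: both decompose $D'$ as the disjoint union of $D$ and the $\lceil\log_2 k\rceil-1$ isolated positive loops, note the forced identity local functions there, conclude $\maxpf(D')=2^{\lceil\log_2 k\rceil-1}\cdot\maxpf(D)$, and finish with the same arithmetic using $2^{\lceil\log_2 k\rceil-1}<k\leq 2^{\lceil\log_2 k\rceil}$. You merely spell out the multiplicativity of the fixed-point count over disjoint components in more detail than the paper, which asserts it directly.
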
 

\begin{proof}
Let $\ell=\lceil \log_2 k \rceil$ so that $2^{\ell-1}<k\leq 2^\ell$ and $\ell\geq 2$. Let $H$ be the SID with $\ell-1$ vertices and a positive loop on each vertex. Then $\functions(H)$ contains a unique BN, which is the identity, thus $\maxpf(H)=2^{\ell-1}$. Since $D'$ is the disjoint union of $D$ and $H$, 
\[
\maxpf(D')=\maxpf(D)\cdot\maxpf(H)=\maxpf(D)\cdot 2^{\ell-1}.
\]  
Thus, $\maxpf(D')\geq 2^\ell\geq k$ if $\maxpf(D)\geq 2$, and $\maxpf(D')\leq 2^{\ell-1}<k$ otherwise.
\end{proof}

It remains to prove the case $k=2$: it is $\NP$-hard to decide if $\maxpf(D)\geq
2$. This is the main contribution of the paper. First, from a technical point
of view, the reduction used for this decision problem will be adapted for all
the other hardness results of this paper. Second, from a more general point of
view, many works have been devoted to the study of the SID of dynamical systems
with multiple steady states, both in the continuous and discrete setting, see
\cite{S06,RRT08,S06,KST07,R19} and the references therein. The basic
observation, answering a conjecture of the biologist Thomas, is that a
non-negative cycle must be present. The biological motivation behind is that
dynamical systems (in particular BNs) with multiple steady states (fixed points
in the discrete setting) should account for very important biological
phenomena: cell differentiation processes. In our setting, the principal
observation we just mention is the first item of Theorem
\ref{thm:basic_results}: if  $\maxpf(D)\geq 2$ then $D$ has a non-negative
cycle. This necessary condition for $\maxpf(D)\geq 2$ (which can be checked in
polynomial time) is obviously not sufficient, and it is natural to seek for a
characterization. By proving that it is NP-hard to decide that $\maxpf(D)\geq
2$, we show that any such characterization is difficult to check. Together with
the complexity upper bound established above, this answers a question stated in
\cite{R18}.   

\medskip
The rough idea is the following. We know that if \an{} SID $D_1$ has no source and
is full-positive then $\minpf(D_1)\geq 2$ (this is an easy consequence of the
last item of Theorem~\ref{thm:basic_results}). More precisely, every
$f\in\functions(D_1)$ has two distinct fixed points $x$ and $y$ such that
$x\leq y$. If, in addition, $D_1$ has a positive feedback vertex set of size
one, then $\maxpf(D_1)\leq 2$ thus every $f\in\functions(D_1)$ has {\em
exactly} two distinct fixed points $x$ and $y$ such that $x\leq y$. The
simplest example is the full-positive cycle. Now, if we add some negative arcs
in $D_1$, without producing additional positive cycles, this can only reduce
the maximum number of fixed points. In many cases the reduction is effective:
the resulting SID $D_2$ is such that $\maxpf(D_2)\leq 1$. The simplest example
is a full-positive cycle plus any negative arc. The idea is then to add to $D_2$
some additional sources that ``control'' the negative arcs in such a way that
some BNs behave as if their SID were $D_1$ (the negative arcs are ``dynamically
absent'') and have thus two fixed points, while some others behave as if their
SID  were $D_2$  (the negative arcs are ``dynamically present'') and have at
most one fixed point. If this ``control'' is possible if and only if rather
particular graphical conditions are satisfied, then some complexity lower bound
should be obtained. Actually, the reduction consists in encoding, in these
graphical conditions, a formula in conjunctive normal form.

\medskip
So consider a CNF formula $\psi$ over a set $\LAMBDA$ of $n$ variables and with a set of $m$ clauses $\MU$. To each variable $\lambda\in\LAMBDA$ is associated a positive literal $\lambda^+$ and a negative literal $\lambda^-$. The resulting sets of positive and negative literals are denoted $\LAMBDA^+$ and $\LAMBDA^-$, and each clause is regarded as a non-empty subset of $\LAMBDA^+\cup\LAMBDA^-$. If each clause is of size at most $3$, then $\psi$ is a $3$-CNF formula. An {\em assignment} for $\psi$ is regarded as a configuration $z$ on a set $V$ such that $\LAMBDA\subseteq V$. A positive literal $\lambda^+$ is satisfied by $z$ if $z_\lambda=1$, and a negative literal $\lambda^-$ is satisfied by $z$ if $z_\lambda=0$. A clause is satisfied by $z$ if at least one of its literals is satisfied by $z$. The formula $\psi$ is {\em satisfied} by $z$ (or $z$ is a {\em satisfying assignment} for $\psi$) if every clause in $\MU$ is satisfied by $z$. We say that $\psi$ is {\em satisfiable} if it has at least one satisfying assignment. 

\medskip
The reduction from the SAT problem is based on the following definition; see Figure~\ref{fig:D_psi} for an illustration. 

\begin{definition}[$D_\psi$]\label{def:D_psi}
Let $\psi$ be a CNF formula over a set $\LAMBDA$ of $n$ variables and with a set of $m$ clauses $\MU$. Given an enumeration $\LAMBDA=\{\lambda_1,\dots,\lambda_n\}$ of the variables and an enumeration $\MU=\{\mu_1,\dots,\mu_m\}$ of the clauses, we define \an{} SID $D_\psi$ with $4n+2m+1$ vertices as follows:
\begin{itemize}
\item
The vertex set is 
\[
V_\psi=\LAMBDA\cup U_\psi
\quad\text{with}\quad
U_\psi=\LAMBDA^+\cup\LAMBDA^-\cup\ELL\cup \MU\cup \C,
\]
where $\ELL=\{\ell_0,\dots,\ell_n\}$ and $\C=\{c_1,\dots,c_m\}$.  
\item The arcs are, for all $r\in [n]$ and $s\in [m]$, 
\begin{itemize}
\item
$(\lambda_r,\lambda^+_r),(\ell_{r-1},\lambda^+_r)$,
\item
$(\lambda_r,\lambda^-_r),(\ell_{r-1},\lambda^-_r)$, 
\item
$(\lambda^+_r,\ell_r),(\lambda^-_r,\ell_r)$,
\item
$(c_1,\ell_0)$,
\item
$(i,\mu_s)$ for all $i\in \LAMBDA^+\cup\LAMBDA^-$ such that $i$ is a literal in $\mu_s$,
\item
$(\mu_s,c_s),(c_{s+1},c_s)$, where $c_{m+1}$ means $\ell_n$.
\end{itemize} 
\item
For all $r\in [n]$ and $s\in [m]$, the arcs $(\lambda_r,\lambda^-_r)$ and $(\mu_s,c_s)$ are negative, and all the other arcs are positive. 
\end{itemize}
\end{definition}

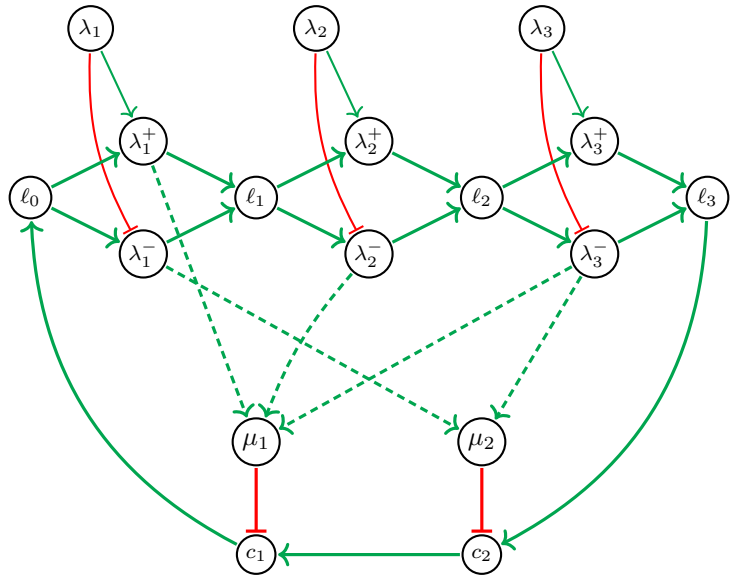
\begin{figure}
\[
\begin{tikzpicture}
\node[outer sep=1,inner sep=2,circle,draw,thick] (L1) at (0.8,2.5){\scriptsize$\lambda_1$};
\node[outer sep=1,inner sep=2,circle,draw,thick] (L2) at (3.8,2.5){\scriptsize$\lambda_2$};
\node[outer sep=1,inner sep=2,circle,draw,thick] (L3) at (6.8,2.5){\scriptsize$\lambda_3$};
\node[outer sep=1,inner sep=1,circle,draw,thick] (Lp1) at (1.5,1){\scriptsize$\lambda^+_1$};
\node[outer sep=1,inner sep=1,circle,draw,thick] (Lm1) at (1.5,-0.5){\scriptsize$\lambda^-_1$};
\node[outer sep=1,inner sep=1,circle,draw,thick] (Lp2) at (4.5,1){\scriptsize$\lambda^+_2$};
\node[outer sep=1,inner sep=1,circle,draw,thick] (Lm2) at (4.5,-0.5){\scriptsize$\lambda^-_2$};
\node[outer sep=1,inner sep=1,circle,draw,thick] (Lp3) at (7.5,1){\scriptsize$\lambda^+_3$};
\node[outer sep=1,inner sep=1,circle,draw,thick] (Lm3) at (7.5,-0.5){\scriptsize$\lambda^-_3$};
\node[outer sep=1,inner sep=2,circle,draw,thick] (1) at (0,0.25){\scriptsize$\ell_0$};
\node[outer sep=1,inner sep=2,circle,draw,thick] (2) at (3,0.25){\scriptsize$\ell_1$};
\node[outer sep=1,inner sep=2,circle,draw,thick] (3) at (6,0.25){\scriptsize$\ell_2$};
\node[outer sep=1,inner sep=2,circle,draw,thick] (4) at (9,0.25){\scriptsize$\ell_3$};
\node[outer sep=1,inner sep=2,circle,draw,thick] (mu1) at (3,-3){\small$\mu_1$};
\node[outer sep=1,inner sep=2,circle,draw,thick] (mu2) at (6,-3){\small$\mu_2$};
\node[outer sep=1,inner sep=2,circle,draw,thick] (c1) at (3,-4.5){\scriptsize$c_1$};
\node[outer sep=1,inner sep=2,circle,draw,thick] (c2) at (6,-4.5){\scriptsize$c_2$};
\path[Green,->,thick]
(L1) edge (Lp1)
(L2) edge (Lp2)
(L3) edge (Lp3)
(L1) edge[red, bend right=15,-|] (Lm1)
(L2) edge[red, bend right=15,-|] (Lm2)
(L3) edge[red, bend right=15,-|] (Lm3)
;
\path[Green,->,very thick]
(1) edge (Lp1)
(1) edge (Lm1)
(Lp1) edge (2)
(Lm1) edge (2)
(2) edge (Lp2)
(2) edge (Lm2)
(Lp2) edge (3)
(Lm2) edge (3)
(3) edge (Lp3)
(3) edge (Lm3)
(Lp3) edge (4)
(Lm3) edge (4)
(4) edge[bend left=30] (c2)
(c2) edge (c1)
(c1) edge[bend left=30] (1)
;
\path[Green,->,very thick]
(Lp1) edge[densely dashed] (mu1)
(Lm2) edge[densely dashed,bend right=10] (mu1)
(Lm3) edge[densely dashed] (mu1)
(mu1) edge[red,-|] (c1);
\path[Green,->,very thick]
(Lm1) edge[densely dashed] (mu2)
(Lm3) edge[densely dashed] (mu2)
(mu2) edge[red,-|] (c2)
;
\end{tikzpicture}
\]
\caption{\label{fig:D_psi}
SID $D_\psi$ of Definition~\ref{def:D_psi} for the 3-CNF formula $\psi=(\lambda_1\lor \neg\lambda_2\lor\neg\lambda_3)\land(\neg\lambda_1\lor \neg\lambda_3)$. Using our notations, the set of variables is $\LAMBDA=\{\lambda_1,\lambda_2,\lambda_3\}$ and the set of clauses is $\MU=\{\mu_1,\mu_2\}$ where $\mu_1=\{\lambda^+_1,\lambda^-_2,\lambda^-_3\}$ and $\mu_2=\{\lambda^-_1,\lambda^-_3\}$. Clauses are encoded in $D_\psi$ through dashed arrows. 
}
\end{figure}

Note that vertices in $\LAMBDA$ (variables) have in-degree $0$ (they are sources), vertices in $\MU$ (clauses) have in-degree at most the maximum size $k$ of a clause (hence $k\leq 2n$ and $k\leq 3$ if $\psi$ is a $3$-CNF formula), and all the other vertices have in-degree $2$, excepted $\ell_0$ which has in-degree $1$. 

\medskip
Intuitively, the SID $D_{\psi,1}$ obtained from $D_\psi$ by deleting the sources and the negative arcs corresponds to the SID $D_1$ in the above rough description of the construction: $D_{\psi,1}$ has no source and is full-positive, so that $\minpf(D_{\psi,1})\geq 2$, and since any vertex in $\ELL\cup\C$ is a positive feedback vertex set we have $\minpf(D_{\psi,1})=\maxpf(D_{\psi,1})=2$. Let $D_{\psi,2}$ obtained from $D_{\psi,1}$ by adding the negative arcs $(\mu_s,c_s)$. One can check that the addition of these negative arcs decreases the maximum number of fixed points: we have $\maxpf(D_{\psi,2})\leq 1$ (see \cite{R18}, Theorem 3). This SID $D_{\psi,2}$ plays the role of SID $D_2$ in the above rough description of the construction. 

\medskip
This construction works intuitively as follows. For the first direction, we consider an assignment $z\in\bool^{\LAMBDA}$ and we construct $f\in \functions(D_\psi)$,  in such a way that, at the first iteration, each source $\lambda_r$ is ``fixed'' to the state $z_{\lambda_r}$. By taking the OR function for the local functions associated with the positive and negative literals, it results that, at the second iteration, exactly one of $\lambda^+_r,\lambda^-_r$ is ``fixed'' to state $1$: the positive literal if $z_{\lambda_r}=1$, and the negative literal otherwise. At this point, if $z$ is a true assignment, each clause $\mu_s$ has at least one in-neighbor ``fixed'' to state $1$, and by taking $f_{\mu_s}$ as the OR function, $\mu_s$ is fixed at state $1$ at the third iteration. By taking $f_{c_s}$ as the OR function, each vertex $c_s$ is still ``free'', and by taking $f_{\ell_r}$ as the AND function, each vertex $\ell_r$ is still ``free''. Hence, there is a positive cycle containing only ``free'' vertices, which ``produces'' two fixed points. This is like if, in three iterations, all the negative arcs ``disappear'' leaving ``free'' a full-positive cycle. In this construction, only the local functions associated with the sources depend on the assignment $z$, and $f$ has at least one fixed point even if $z$ is not a true assignment. These two properties will be very useful for the other hardness results. Below, we do not described formally this ``three iterations process''. We only prove what we need for the following. 

\begin{lemma}\label{lem:psi_1}
Let $z\in\bool^{\LAMBDA}$ and $f\in\functions(D_\psi)$ defined by: $f_{\LAMBDA}=z$ and, for all $i\in U_\psi$, $f_i$ is the AND function if $i\in\ELL$ and the OR function otherwise. Then $f$ has at least one fixed point, and if $\psi$ is satisfied by $z$, then $f$ has at least two fixed points.
\end{lemma}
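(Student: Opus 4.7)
The plan is to characterize the fixed points of $f$ as a one-parameter family indexed by $a := x_{\ell_0} \in \bool$. Since the variables $\lambda_r$ are sources and are pinned by $f_\LAMBDA = z$, the only ``free'' source-like degree of freedom in the fixed-point equation is at $\ell_0$; I will show that once $a$ is fixed, every other coordinate of $x$ is forced, and only the consistency condition $x_{c_1} = a$ remains to be checked.

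First I would spell out the local functions from the definition. Since the arcs $(\lambda_r, \lambda_r^-)$ and $(\mu_s, c_s)$ are negative and all others are positive, and since $f_i$ is AND for $i \in \ELL$ and OR otherwise, we get
\[
f_{\lambda_r^+}(x) = x_{\lambda_r} \vee x_{\ell_{r-1}}, \quad f_{\lambda_r^-}(x) = \neg x_{\lambda_r} \vee x_{\ell_{r-1}}, \quad f_{\ell_r}(x) = x_{\lambda_r^+} \wedge x_{\lambda_r^-},
\]
together with $f_{\ell_0}(x) = x_{c_1}$ (a single in-neighbor via a positive arc), $f_{\mu_s}(x) = \bigvee_{i \in \mu_s} x_i$, and $f_{c_s}(x) = \neg x_{\mu_s} \vee x_{c_{s+1}}$, with the convention $c_{m+1} := \ell_n$.

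Then I would propagate the fixed-point condition forward along the $\ell$-chain. The crucial simplification is at each AND vertex: substituting the literal values gives
\[
x_{\ell_r} = (z_{\lambda_r} \vee x_{\ell_{r-1}}) \wedge (\neg z_{\lambda_r} \vee x_{\ell_{r-1}}) = x_{\ell_{r-1}},
\]
so by induction $x_{\ell_r} = a$ for every $r \in \{0,\dots,n\}$, independently of the assignment $z$. This forces $x_{\lambda_r^+} = z_{\lambda_r} \vee a$ and $x_{\lambda_r^-} = \neg z_{\lambda_r} \vee a$, which in turn determines $x_{\mu_s}$ as either the Boolean value ``clause $\mu_s$ is satisfied by $z$'' (when $a=0$) or $1$ (when $a=1$). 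A backward induction along the $c$-chain starting from $x_{c_{m+1}} = x_{\ell_n} = a$ then yields $x_{c_s} = a$ whenever every $x_{\mu_s} = 1$, so the closing equation $x_{\ell_0} = x_{c_1} = a$ is automatic in that case.

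Two cases finish the argument. If $a = 1$, every literal state equals $1$, so every $x_{\mu_s} = 1$ and we obtain a fixed point unconditionally, giving $\phi(f) \geq 1$. If in addition $z$ satisfies $\psi$, then setting $a = 0$ also makes every $x_{\mu_s} = 1$, yielding a second fixed point distinct from the first (they differ on $\ell_0$), hence $\phi(f) \geq 2$. The only non-routine step is the AND-cancellation on the $\ell$-chain above; everything else is a direct propagation along the acyclic-looking flow $\ell_0 \to \text{literals} \to \ell_r \to \mu_s \to c_s \to \ell_0$.
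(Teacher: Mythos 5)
Your proof is correct and constructs exactly the same two fixed points as the paper (the all-ones configuration on $U_\psi$ for $a=1$, and the paper's configuration $y$ for $a=0$); the paper simply exhibits these configurations explicitly and verifies $f(x)=x$ coordinatewise, whereas you derive them by forward/backward propagation from the choice of $a=x_{\ell_0}$. Your framing has the minor bonus of showing that these are the \emph{only} candidate fixed points of this particular $f$, but the substance of the argument is the same.
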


\begin{proof}
Let $x$ be the configuration on $V_\psi$ defined by $x_{\LAMBDA}=z$ and $x_{U_\psi}=1$. First, $f(x)_{\LAMBDA}=z=x_{\LAMBDA}$. Second, $f_{\ell_0}(x)=x_{c_1}=1=x_{\ell_0}$ and, for all $r\in [n]$, $f_{\ell_r}(x)=x_{\lambda^+_r}\land x_{\lambda^-_r}=1\land 1=1=x_{\ell_r}$. Third, any vertex $i\in U_\psi\setminus \ELL$ has a positive in-neighbor $j\in U_\psi$. Since $f_i$ is the OR function and $x_j=1$, we have $f_i(x)=1=x_i$. Thus, \label{key}$f(x)=x$. 

\medskip
Let $y$ be the configuration on $V_\psi$ be defined by: $y_{\LAMBDA}=z$ and, for all $i\in U_\psi$,
\[
y_i=1
\iff
\left\{
\begin{array}{l}
\textrm{$i$ is a clause $\mu_s$, or}\\
\textrm{$i$ is a positive literal $\lambda^+_r$ such that $z_{\lambda_r}=1$, or}\\
\textrm{$i$ is a negative literal $\lambda^-_r$ such that $z_{\lambda_r}=0$.}
\end{array}
\right.
\]
Note that $y_{\lambda^+_r}=z_{\lambda_r}\neq y_{\lambda^-_r}$ for all $r\in [n]$, so $x\neq y$. Suppose that $\psi$ is satisfied by $z$, and let us prove that $y$ is a fixed point of $f$. It is clear that $f(y)_{\LAMBDA}=z=y_{\LAMBDA}$. Let $r\in[n]$. Since exactly one of $y_{\lambda^+_r},y_{\lambda^-_r}$ is $0$, we have $f_{\ell_r}(y)=y_{\lambda^+_r}\land y_{\lambda^-_r}=0=y_{\ell_r}$; and $f_{\ell_0}(y)=y_{c_1}=0=y_{\ell_0}$. Then  
\[
\begin{array}{ll}
f_{\lambda^+_r}(y)=y_{\lambda_r}\lor y_{\ell_{r-1}}=z_{\lambda_r}\lor 0=z_{\lambda_r}=y_{\lambda^+_r},\\[1mm]
f_{\lambda^-_r}(y)=\neg y_{\lambda_r}\lor y_{\ell_{r-1}}=\neg z_{\lambda_r}\lor 0=\neg z_{\lambda_r}=y_{\lambda^-_r}.
\end{array}
\]
Let $s\in [m]$. Since $\psi$ is satisfied by $z$, $\mu_s$ has at least one in-neighbor which is a positive literal $\lambda^+_r$ with $z_{\lambda_r}=1$ or a negative literal $\lambda^-_{\lambda_r}$ such that $z_{\lambda_r}=0$. Hence, $\mu_s$ has an in-neighbor $i$ with $y_i=1$ and since $f_{\mu_s}$ is the OR function we deduce that $f_{\mu_s}(y)=1=y_{\mu_s}$. We finally prove that $f_{c_s}(y)=0=y_{c_s}$ by induction on $s$ from $m+1$ to $1$. Since $c_{m+1}$ means $\ell_n$, the case $s=m+1$ is already proven. Then, for $s\in[m]$, we have $y_{c_{s+1}}=0$ by induction so $f_{c_s}(y)=\neg y_{\mu_s}\lor y_{c_{s+1}}=\neg 1\lor 0=0=y_{c_s}$. Thus, $f(y)=y$. 
\end{proof}

For the other direction, we suppose that there is $f\in \functions(D_\psi)$ with two distinct fixed points $x$ and $y$. Then we prove that either $x\leq y$ or $y\leq x$, exactly as if the SID of $f$ where $D_{\psi,1}$ (or any full-positive SID without two vertex-disjoint positive cycles); this results from Lemma~\ref{lem:pseudo_monotone} below. Furthermore, for each clause $\mu_s$ it appears that $x_{\mu_s}=y_{\mu_s}$, the clause is ``fixed''. Since $x\leq y$ or $y\leq x$, this is possible only if $\mu_s$ has an in-neighbor (a positive or negative literal associated to some variable $\lambda_r$) which is also ``fixed''; this results from Lemma~\ref{lem:constant_vertex} below. Beside, an easy consequence of $x\neq y$ is that at most one $\lambda^+_r,\lambda^-_r$ is ``fixed'' and from that point we easily obtain a satisfying assignment. 


\begin{lemma}\label{lem:pseudo_monotone}
Let $D$ be a \simple{} SID, without two vertex-disjoint positive cycles, such that every positive cycle is full-positive and, for any negative arc $(j,i)$, either $j$ is a source or every positive cycle contains $i$. Let $f\in\functions(D)$ with two fixed point $x$ and $y$.~The following~holds: 
\begin{itemize}
\item We have $x\leq y$ or $y\leq x$. 
\item If $(j,i)$ is a negative arc and $i$ is of in-degree two, then $x_j=y_j$.   
\end{itemize}
\end{lemma}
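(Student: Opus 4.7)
The plan, for the first claim, is to assume for contradiction that both $P=\{i:x_i<y_i\}$ and $N=\{i:x_i>y_i\}$ are non-empty, and to extract from this two vertex-disjoint positive cycles of $D$. Set $\epsilon_i=+1$ if $i\in P$ and $\epsilon_i=-1$ if $i\in N$. For any $i\in P\cup N$, the inequality $f_i(x)\neq f_i(y)$ together with the monotonicity of Lemma~\ref{lem:partial_order}(\ref{prop1}) rules out the relation $y\leq^D_i x$ when $i\in P$ and $x\leq^D_i y$ when $i\in N$; since $D$ is \simple{}, this translates into the existence of an in-neighbor $j$ of $i$ lying in $P\cup N$ and satisfying $\sigma_{ji}\epsilon_j=\epsilon_i$. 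Iterating this predecessor choice backwards from any starting vertex produces, by finiteness, a cycle $C$ whose arcs all satisfy $\sigma_{ji}=\epsilon_i\epsilon_j$. The sign of $C$ is therefore $\prod_{v\in C}\epsilon_v^2=1$, so $C$ is positive and hence full-positive by hypothesis; every arc of $C$ satisfies $\sigma_{ji}=+1$, which forces $\epsilon_i=\epsilon_j$ along $C$, and so $C$ is entirely contained in $P$ or in $N$. Starting the walk once from a vertex of $P$ and once from a vertex of $N$ then yields two positive cycles on disjoint vertex sets, the desired contradiction.

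For the second claim, assume without loss of generality that $x\leq y$, and let $(j,i)$ be a negative arc with $i$ of in-degree two and second in-neighbor $k$. If $j$ is a source, $f_j$ is constant and $x_j=y_j$ is immediate. Otherwise, by hypothesis, every positive cycle of $D$ contains $i$, so $\{i\}$ is a positive feedback vertex set and Lemma~\ref{lem:positive_feedback_lemma} yields $x_i<y_i$. If $(k,i)$ were also negative, no positive arc would enter $i$; by full-positivity of positive cycles, no positive cycle of $D$ could then contain $i$, which combined with the hypothesis leaves $D$ with no positive cycle at all. In that case $\tau^+(D)=0$ and Theorem~\ref{thm:bound} gives $\maxpf(D)\leq 1$, contradicting $x\neq y$. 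Thus $(k,i)$ is positive, and combining $x\leq y$, $f_i(x)<f_i(y)$ and Lemma~\ref{lem:partial_order}(\ref{prop1}) forces $x_k<y_k$.

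Suppose now for contradiction that $x_j<y_j$, so that $x_i=x_j=x_k=0$ and $y_i=y_j=y_k=1$. Since $D$ is \simple{} and $i$ has exactly one positive and one negative in-neighbor, a direct enumeration of the monotonicity constraints leaves only two admissible local functions for $f_i$: $\neg x_j\wedge x_k$ and $\neg x_j\vee x_k$. The first evaluates to $0$ at $y$, contradicting $f_i(y)=y_i=1$; the second evaluates to $1$ at $x$, contradicting $f_i(x)=x_i=0$. Hence $x_j=y_j$. The delicate step in this plan is the backward-walk argument of the first paragraph: choosing the labelling $\epsilon$ precisely so that the resulting cycle is automatically positive, and then exploiting full-positivity to force it to lie entirely inside $P$ or inside $N$, is what makes the two vertex-disjoint positive cycles fall out; once this is in place, the second claim reduces to the short case analysis above.
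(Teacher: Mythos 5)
Your argument for the first bullet has a genuine gap at its final step. The backward predecessor choice does give every $i\in P\cup N$ an in-neighbor $p(i)\in P\cup N$ with $\sigma_{p(i)i}=\epsilon_i\epsilon_{p(i)}$, and any cycle of the resulting functional digraph is indeed positive, hence full-positive, hence contained in $P$ or in $N$. But the claim that starting the walk once in $P$ and once in $N$ ``yields two positive cycles on disjoint vertex sets'' does not follow: a walk started in $P$ may cross into $N$ along a negative arc and terminate in a cycle lying entirely in $N$, and the two walks may well merge and reach the \emph{same} cycle. The tell-tale sign is that your argument for this bullet never uses the hypothesis that for every negative arc $(j,i)$ either $j$ is a source or every positive cycle contains $i$ --- yet the conclusion is false without it. Concretely, take $V_D=\{a,b\}$ with a positive loop on $b$ and a negative arc $(b,a)$, and $f_b(x)=x_b$, $f_a(x)=\neg x_b$: the fixed points $x=(x_a,x_b)=(1,0)$ and $y=(0,1)$ are incomparable, there is a single positive cycle (so no two vertex-disjoint ones) and it is full-positive, but your walks from $a\in N$ and from $b\in P$ both land on the same loop $\{b\}\subseteq P$. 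The missing step is exactly where the paper works: since two distinct cycles of the functional digraph would be vertex-disjoint and positive, there is only \emph{one} cycle $C$, so the functional digraph is connected; one then shows it is full-positive, because a negative arc $(p(v),v)$ of it has $p(v)\in P\cup N$, hence $p(v)$ is not a source (sources have constant local functions, so equal values at both fixed points), hence by hypothesis $C$ contains $v$, hence also its unique predecessor $p(v)$, so $(p(v),v)$ would be a negative arc of the full-positive cycle $C$ --- a contradiction. Full-positivity forces $\epsilon$ to be constant on the connected functional digraph, giving $x\leq y$ or $y\leq x$.

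Your proof of the second bullet is correct and essentially the paper's: granting $x\leq y$, the hypothesis gives $x_i<y_i$ via Lemma~\ref{lem:positive_feedback_lemma}, and the AND/OR dichotomy for a \simple{} in-degree-two vertex yields the contradiction. Your extra subcase (both in-neighbors of $i$ negative, handled via $\tau^+(D)=0$ and Theorem~\ref{thm:bound}) is a valid and slightly more careful treatment than the paper's, which leaves that situation implicit. But as written the lemma is not proved: the first bullet needs the repair above, and the second bullet's ``without loss of generality $x\leq y$'' depends on it.
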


\begin{proof}
Let $I$ be the set of vertices $i$ in $D$ such that $x_i\neq y_i$. We prove
that each $i\in I$ has an in-neighbor $j\in I$ such that
$x_j\xor\tilde\sigma_{ji}=x_i$. Let $i\in I$ and suppose that $x_i<y_i$, the
other case being similar. Then $f_i(x)<f_i(y)$ and thus $y\not\leq^D_i x$ by
Lemma~\ref{lem:partial_order}. Hence, $i$ has at least one in-neighbor $j$
such that: $(j,i)$ is positive and $x_j<y_j$, or $(j,i)$ is negative and
$x_j>y_j$. In both cases, $j\in I$ and $x_j\xor\tilde\sigma_{ji}=x_i$. 

\medskip
We deduce that $D$ has a subgraph $D'$ with vertex set $I$, in which each vertex is of in-degree one, and such that $x_j\xor\tilde\sigma_{ji}=x_i$ for any arc $(j,i)$ of $D'$. Hence, for any $i,j\in I$, if $x_j=x_i$ then any walk in $D'$ from $j$ to $i$ visits an even number of negative arcs. In particular, any cycle of $D'$ is positive. Thus, by hypothesis, any cycle of $D'$ is full-positive.  

\medskip
Since, in $D'$, each vertex is of in-degree one, $D'$ has a cycle $C$ (which is full-positive). Furthermore, $D'$ has no other cycle (since otherwise $D$ has two vertex-disjoint positive cycles). Thus, $D'$ is connected. Furthermore, $D'$ is full-positive: if $(j,i)$ is a negative arc of $D'$ then $j$ is not a source (since $j\in I$) so, by hypothesis, $i$ is in $C$, and thus $j$ too. But then $C$ has a negative arc, and it is a contradiction. So $D'$ is full-positive and we deduce that $x_j=x_i$ for any arc $(j,i)$ of $D'$. Since $D'$ is connected, we deduce that $x_j=x_i$ for any $i,j\in I$ and this implies that $y_j=y_i$ for any $i,j\in I$. Thus, either $x\leq y$ or $y\leq x$. Suppose, without loss, that $x\leq y$. 

\medskip
Let $(j,i)$ be a negative arc of $D$. Suppose that $i$ is of in-degree two in $D$ and, for a contradiction, that $x_j\neq y_j$. Then $x_j<y_j$ since $x\leq y$. By hypothesis, $i$ is in $C$ thus $x_i<y_i$. Since $i$ is of in-degree two, $f_i$ is either the AND function or the OR function. If $f_i$ is the OR function, then $f_i(x)=1$ since $x_j=0$ and $(j,i)$ is negative, which is a contradiction since $x_i=0$. If $f_i$ is the AND function, then $f_i(y)=0$ since $y_j=1$ and $(j,i)$ is negative, which is a contradiction since $y_i=1$. Thus, $x_j=y_j$.  
\end{proof}

\begin{lemma}\label{lem:constant_vertex}
Let $D$ be \an{} SID and $f\in\functions(D)$ with two fixed points $x,y$ such that $x\leq y$. Every vertex $i$ with only positive in-neighbors such that $x_i=y_i$ has a positive in-neighbor $j$ such that $x_j=y_j$.
\end{lemma}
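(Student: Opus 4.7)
The plan is to apply the second property of Lemma~\ref{lem:partial_order} (which produces, for any configuration, a non-null in-neighbor that ``witnesses'' the value of $f_i$), doing a case split on the common value $x_i = y_i$. Since all in-neighbors of $i$ are positive, $i$ has no null in-neighbor, so property~2 is available as soon as $i$ has at least one in-neighbor; in the degenerate case where $i$ has no in-neighbor at all, the statement is vacuous as no positive in-neighbor can be exhibited, so we may assume $N(i) \neq \emptyset$.

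First, suppose $x_i = y_i = 1$. Since $x$ is a fixed point, $f_i(x) = x_i = 1$. By the second property of Lemma~\ref{lem:partial_order} there exists a non-null in-neighbor $j$ of $i$ such that $f_i(x) = x_j \xor \tilde\sigma_{ji}$. As $(j,i)$ is positive we have $\tilde\sigma_{ji} = 0$, so $x_j = 1$. From $x \leq y$ we then get $y_j \geq x_j = 1$, hence $y_j = 1 = x_j$, and $j$ is a positive in-neighbor of $i$ with the required property.

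The case $x_i = y_i = 0$ is symmetric, but applied to $y$: we have $f_i(y) = y_i = 0$, so by property~2 there is a non-null (hence positive) in-neighbor $j$ of $i$ with $f_i(y) = y_j \xor \tilde\sigma_{ji} = y_j = 0$. From $x \leq y$ we get $x_j \leq y_j = 0$, hence $x_j = 0 = y_j$, as required.

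There is essentially no obstacle: the only subtlety is remembering that the witness from property~2 is guaranteed to be non-null (so automatically positive, since all in-neighbors of $i$ are positive), and choosing to apply property~2 to $x$ in the ``$1$'' case and to $y$ in the ``$0$'' case so that the monotonicity $x \leq y$ propagates the witness value to the other fixed point.
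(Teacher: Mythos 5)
Your proof is correct and follows essentially the same route as the paper's: in both cases one invokes the second property of Lemma~\ref{lem:partial_order} (applied to $x$ when $x_i=y_i=1$ and to $y$ when $x_i=y_i=0$) to extract a positive in-neighbor witnessing the value, and then uses $x\leq y$ to transfer that value to the other fixed point. The remark about the degenerate sourceless case is a harmless aside that the paper leaves implicit, since the lemma is only ever applied to vertices with at least one in-neighbor.
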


\begin{proof}
Let $i$ be as in the statement. If $x_i=y_i=0$ then, by
Lemma~\ref{lem:partial_order}, $i$ has a positive in-neighbor $j$ such that
$y_j=f_i(y)=y_i=0$, and since $x\leq y$ we have $x_j=y_j=0$. If $x_i=y_i=1$
then, by Lemma~\ref{lem:partial_order}, $i$ has a positive
in-neighbor $j$ such that $x_j=f_i(x)=x_i=1$, and since $x\leq y$ we have
$x_j=y_j=1$. 
\end{proof}

We go back to our construction $D_\psi$ and prove a kind of converse of Lemma \ref{lem:psi_1}. We need a definition. For every $r\in [n]$, $\ell_r$ is of in-degree two in $D_\psi$, so $f_{\ell_r}$ is either the AND function or the OR function; we then define $\epsilon(f)\in\bool^{\LAMBDA}$ as follows: for all $r\in [n]$, 
\[
\epsilon(f)_{\lambda_r}=
\left\{
\begin{array}{ll}
0&\textrm{if $f_{\ell_r}$ is the OR function,}\\[1mm]
1&\textrm{if $f_{\ell_r}$ is the AND function.}
\end{array}
\right.
\]

\begin{lemma}\label{lem:psi_2}
If $f\in\functions(D_\psi)$ has distinct fixed points $x$ and $y$, then $\psi$ is satisfied by $x_{\LAMBDA}\xor\epsilon(f)$. 
\end{lemma}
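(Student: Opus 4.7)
The plan is to combine Lemma~\ref{lem:pseudo_monotone} and Lemma~\ref{lem:constant_vertex} with a short case analysis on the types (AND/OR) of the local functions $f_{\ell_r}$ and $f_{i^s}$.

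\medskip
First I check that $D_\psi$ fits the hypotheses of Lemma~\ref{lem:pseudo_monotone}: the only negative arcs are $(\lambda_r,\lambda^-_r)$, whose tail is a source, and $(\mu_s,c_s)$. Since sources lie on no cycle and the only arc leaving each $\mu_s$ is the negative one to $c_s$, every positive cycle must traverse the ``main loop'' $\ell_0\to\lambda^{\pm}_1\to\ell_1\to\dots\to\ell_n\to c_m\to\dots\to c_1\to\ell_0$; hence every positive cycle is full-positive, contains every $c_s$, and no two positive cycles are vertex-disjoint. Also, since each $\lambda_r$ is a source, $f_{\lambda_r}$ is constant, so $x_\LAMBDA=y_\LAMBDA=:w$, and swapping $x$ and $y$ leaves the assignment $x_\LAMBDA\xor\epsilon(f)$ unchanged. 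Lemma~\ref{lem:pseudo_monotone} then gives, WLOG, $x\leq y$, and its second item yields $x_{\mu_s}=y_{\mu_s}$ for every $s$ (since $c_s$ is of in-degree two).

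\medskip
Let $I=\{i:x_i<y_i\}$, which is nonempty since $x\neq y$. Following the argument inside the proof of Lemma~\ref{lem:pseudo_monotone}, the subgraph of $D_\psi$ supported on $I$ contains a full-positive cycle; by the structural observation above this cycle must be a main loop, hence $\ell_0,\ldots,\ell_n$ and $c_1,\ldots,c_m$ all lie in $I$. In particular $x_{\ell_r}=0$ and $y_{\ell_r}=1$ for $r\in\{0,\ldots,n\}$. For each clause $\mu_s$, since $\mu_s\notin I$ and its in-neighbors are all positive (the literals appearing in $\mu_s$), Lemma~\ref{lem:constant_vertex} provides a literal $i^s=\lambda^\sigma_r\in\mu_s$ with $x_{i^s}=y_{i^s}$.

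\medskip
The main obstacle is the final verification that $z:=w\xor\epsilon(f)$ satisfies every such literal $i^s$. Writing the AND/OR formulas for $f_{\lambda^\pm_r}$ (whose inputs are $\lambda_r$ and $\ell_{r-1}$) and for $f_{\ell_r}$ (whose inputs are $\lambda^\pm_r$), and substituting the forced values $x_{\ell_{r-1}}=0$, $y_{\ell_{r-1}}=1$, $x_{\ell_r}=0$, $y_{\ell_r}=1$, the equation $x_{i^s}=y_{i^s}$ cuts down the combinations of types for $(f_{\ell_r},f_{i^s})$ to a handful of feasible cases and, in each, forces a specific value of $w_{\lambda_r}$. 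Since $\epsilon(f)_{\lambda_r}$ depends only on whether $f_{\ell_r}$ is AND or OR, a direct inspection in each feasible case shows that $z_{\lambda_r}=w_{\lambda_r}\xor\epsilon(f)_{\lambda_r}$ is precisely the value that makes $i^s$ a satisfied literal. As this holds for every clause, $\psi$ is satisfied by $z=x_\LAMBDA\xor\epsilon(f)$.
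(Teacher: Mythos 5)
Your proposal is correct and follows essentially the same route as the paper: Lemma~\ref{lem:pseudo_monotone} to order the two fixed points and fix each clause vertex, Lemma~\ref{lem:constant_vertex} to extract a ``fixed'' literal per clause, and a final AND/OR case analysis on $f_{\ell_r}$ and the literal's local function (the paper obtains $x_{\ell_r}<y_{\ell_r}$ by applying Lemma~\ref{lem:positive_feedback_lemma} to the singleton $\{\ell_r\}$ rather than by re-opening the proof of Lemma~\ref{lem:pseudo_monotone}, but your structural argument that every positive cycle of $D_\psi$ is a full-positive ``main loop'' is equally valid and is in any case needed to check the hypotheses of that lemma). The one step you assert without executing --- the four-way inspection showing $z_{\lambda_r}\xor\epsilon(f)_{\lambda_r}$ satisfies the literal --- is exactly where the paper does its explicit computation; it does go through as claimed, so this is a matter of unwritten detail rather than a gap.
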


\begin{proof}
Let $f\in\functions(D_\psi)$ and $\epsilon=\epsilon(f)$. Suppose that $f$ has distinct fixed points $x$ and $y$. By Lemma~\ref{lem:pseudo_monotone}, we have $x\leq y$ or $y\leq x$. Suppose, without loss, that $x\leq y$. Since vertices in $\LAMBDA$ are sources, there is $z\in\bool^{\LAMBDA}$ such that $x_{\LAMBDA}=y_{\LAMBDA}=z$. Note also that, for every $i\in\ELL$, $\{i\}$ is a positive feedback vertex, and thus $x_i<y_i$ by Lemma \ref{lem:positive_feedback_lemma}.

\medskip
Consider any clause $\mu_s$, and let us prove that it is satisfied by $z\oplus\epsilon$. By Lemma \ref{lem:pseudo_monotone}, we have $x_{\mu_s}=y_{\mu_s}$ and, by Lemma~\ref{lem:constant_vertex}, $\mu_s$ has an in-neighbor $i$ such that $x_i=y_i$. This in-neighbor $i$ is a positive or negative literal contained in $\mu_s$ and associated with some variable, say $\lambda_r$. We prove that this literal is satisfied by $z\oplus \epsilon$, and so is $\mu_s$. We consider two cases:
\begin{enumerate}
\item
Suppose that $i=\lambda^+_r$. Since $x_{\ell_{r-1}}<y_{\ell_{r-1}}$, if $f_i$ is the OR function we have 
\[
x_i=f_i(x)=x_{\lambda_r}\lor x_{\ell_{r-1}}=x_{\lambda_r}\lor 0= x_{\lambda_r}=z_{\lambda_r},
\]
and if $f_i$ is the AND function we have 
\[
y_i=f_i(y)=y_{\lambda_r}\land y_{\ell_{r-1}}=y_{\lambda_r}\land 1= y_{\lambda_r}=z_{\lambda_r}.
\]
Since $x_i=y_i$, we have $z_{\lambda_r}=x_i=y_i$ and consider two cases.
	\begin{enumerate}	
	
	\item 
	If $z_{\lambda_r}=1$ then $x_i=1$ so, if $f_{\ell_r}$ is the OR function, then 
	\[
	x_{\ell_r}=f_{\ell_r}(x)=x_i\lor x_{\lambda^-_r}=1\lor x_{\lambda^-_r}=1,
	\]
	and this contradicts $x_{\ell_r}<y_{\ell_r}$. Hence, $f_{\ell_r}$ is the AND function so $\epsilon_{\lambda_r}=0$. 

	\item 
	If $z_{\lambda_r}=0$ then $y_i=0$ so, if $f_{\ell_r}$ is the AND function, then 
	\[
	x_{\ell_r}=f_{\ell_r}(y)=y_i\land y_{\lambda^-_r}=0\land y_{\lambda^-_r}=0,
	\]
	and this contradicts $x_{\ell_r}<y_{\ell_r}$. Hence, $f_{\ell_r}$ is the OR function so $\epsilon_{\lambda_r}=1$. 

	\end{enumerate}
In both cases we have $z_{\lambda_r}\xor\epsilon_{\lambda_r}=1$ thus $i=\lambda^+_r$ is satisfied by $z\oplus \epsilon$.
\item
Suppose that $i=\lambda^-_r$. Since $x_{\ell_{r-1}}<y_{\ell_{r-1}}$, if $f_i$ is the OR function we have 
	\[
	x_i=f_i(x)=\neg x_{\lambda_r}\lor x_{\ell_{r-1}}=\neg x_{\lambda_r}\lor 0= \neg x_{\lambda_r}=\neg z_{\lambda_r},
	\]
and if $f_i$ is the AND function we have 
	\[
	y_i=f_i(y)=\neg y_{\lambda_r}\land y_{\ell_{r-1}}=\neg y_{\lambda_r}\land 1= \neg y_{\lambda_r}=\neg z_{\lambda_r}.
	\]
We deduce that $z_{\lambda_r}\neq x_i=y_i$. If $z_{\lambda_r}=0$ then $x_i=1$ and we deduce as in the first case that $\epsilon_{\lambda_r}=0$. If $z_i=1$ then $y_i=0$ and we deduce as in the first case that $\epsilon_{\lambda_r}=1$. In both cases we have $z_{\lambda_r}\xor\epsilon_{\lambda_r}=0$ thus $i=\lambda^-_r$ is satisfied by $z\xor\epsilon$.  
\end{enumerate}
\end{proof}

We can now prove the main property of $D_\psi$. 

\begin{lemma}\label{lem:psi_main}
\[
\maxpf(D_\psi)=
\left\{
\begin{array}{ll}
2&\textrm{if $\psi$ is satisfiable},\\
1&\textrm{otherwise.}
\end{array}
\right.
\]
\end{lemma}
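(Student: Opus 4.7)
The plan is to combine the two directions already isolated in Lemmas~\ref{lem:psi_1} and \ref{lem:psi_2} with a structural upper bound based on the positive feedback bound (Theorem~\ref{thm:bound}). The case analysis splits cleanly into four steps: two trivial bounds ($\maxpf(D_\psi)\geq 1$ and $\maxpf(D_\psi)\leq 2$), and two conditional bounds tied to the satisfiability of $\psi$.

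First I would establish the uniform upper bound $\maxpf(D_\psi)\leq 2$. The key observation is that $\{\ell_0\}$ is a positive feedback vertex set of $D_\psi$. Indeed, every cycle of $D_\psi$ must pass through $\ell_0$: sources $\lambda_r$ cannot lie on a cycle; every literal $\lambda^{\pm}_r$ has $\lambda_r$ (a source) and $\ell_{r-1}$ as only in-neighbors, so a cycle through a literal must go through $\ell_{r-1}$; every $\ell_r$ with $r\geq 1$ has only literals $\lambda^{\pm}_r$ as in-neighbors, so by induction any cycle through some $\ell_r$ reaches $\ell_0$; and a cycle confined to $\MU\cup\C$ is impossible because the $\mu_s$ receive arcs only from literals, while $c_m$ receives arcs only from $\mu_m$ and $c_{m+1}=\ell_n$. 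Hence $D_\psi\setminus\{\ell_0\}$ is acyclic, so $\tau^+(D_\psi)\leq 1$, and Theorem~\ref{thm:bound} gives $\maxpf(D_\psi)\leq 2^1=2$.

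Next I would record the uniform lower bound $\maxpf(D_\psi)\geq 1$, which follows directly from the first half of Lemma~\ref{lem:psi_1} applied to any $z\in\bool^{\LAMBDA}$ (e.g.\ $z=0$): the corresponding BN $f\in\functions(D_\psi)$ always has at least one fixed point.

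Finally I would treat the two cases. If $\psi$ is satisfiable, let $z$ be a satisfying assignment; the second half of Lemma~\ref{lem:psi_1} yields $f\in\functions(D_\psi)$ with at least two fixed points, and combined with the upper bound this forces $\maxpf(D_\psi)=2$. If $\psi$ is not satisfiable, suppose for contradiction that some $f\in\functions(D_\psi)$ has two distinct fixed points $x,y$; Lemma~\ref{lem:psi_2} would then produce the satisfying assignment $x_{\LAMBDA}\xor\epsilon(f)$ for $\psi$, a contradiction. Thus $\maxpf(D_\psi)\leq 1$, and together with the lower bound $\maxpf(D_\psi)=1$.

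There is no real obstacle: all the technical work has been done in Lemmas~\ref{lem:psi_1} and~\ref{lem:psi_2}, and the only step not already available is the routine verification that every cycle of $D_\psi$ passes through $\ell_0$, so that the positive feedback bound yields $\maxpf(D_\psi)\leq 2$. The mildest care needed is to make sure to invoke Lemma~\ref{lem:psi_2} in its full generality (arbitrary $f$) rather than restricting to the specific $f$ built in Lemma~\ref{lem:psi_1}, since the converse direction must rule out \emph{any} BN with two fixed points.
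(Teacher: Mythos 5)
Your proof is correct and follows essentially the same route as the paper's: combine Lemma~\ref{lem:psi_1} with the positive feedback bound to get $1\leq\maxpf(D_\psi)\leq 2$, then use Lemma~\ref{lem:psi_1} and Lemma~\ref{lem:psi_2} for the two directions of the equivalence with satisfiability. The only addition is that you spell out the (correct) verification that every cycle of $D_\psi$ passes through $\ell_0$, which the paper leaves implicit.
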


\begin{proof}
By Lemma~\ref{lem:psi_1} and the positive feedback bound, we have $1\leq \maxpf(D_\psi)\leq 2$. By Lemma \ref{lem:psi_2}, if $\maxpf(D_\psi)=2$ then $\psi$ is satisfiable, and the converse is given by Lemma~\ref{lem:psi_1}.
\end{proof}

The following lemma is useful to reduce the maximum in-degree without modifying the maximum and minimum number of fixed points. The proof is a simple exercise and is omitted. The transformation described in the statement is illustrated Figure~\ref{fig:delta_reduction}.

\begin{lemma}\label{lem:delta_reduction}
Let $D$ be a SID and let $I$ be the set of vertices with at least three in-neighbors. Suppose that each vertex in $I$ has only positive in-neighbors. Let $D'$ be obtained from $D$ by doing the following operations for each $i\in I$. Denoting $j_1,\dots,j_k$ the in-neighbors of $i$,  
\begin{itemize}
\item
we remove the arc from $j_\ell$ to $i$ for $1\leq \ell < k$, 
\item
we add a new vertex $j'_\ell$ and a positive arc from $j_\ell$ to $j'_\ell$ for $1<\ell<k$,  and
\item
setting $j'_1=j_1$ and $j'_{k}=i$, we add a positive arc from $j'_\ell$ to $j'_{\ell+1}$ for $1\leq \ell<k$.
\end{itemize}
Then $\Delta(D')\leq 2$ and $D$ has at most $|V_D|+|I|\Delta(D)-2|I|$ vertices. Furthermore if there is $f \in \functions(D)$ with $\maxpf(D)$ fixed points such that $f_i$ is the OR function for all $i \in I$, then $\maxpf(D')=\maxpf(D)$. Similarly, if there is $f \in \functions(D)$ with $\minpf(D)$ fixed points such that $f_i$ is the OR function for all $i \in I$, then $\minpf(D')=\minpf(D)$.
\end{lemma}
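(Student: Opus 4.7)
The plan is to view the chain $j'_1, j'_2, \ldots, j'_{k-1}, j'_k = i$ built for each $i \in I$ as a cascade of binary gates: at any fixed point of a BN in $\functions(D')$ the value at $j'_\ell$ is obtained recursively from its two positive in-neighbors by applying $f'_{j'_\ell}$, and these compose into a read-once monotone formula in the original inputs $x_{j_1},\ldots,x_{j_k}$ which plays the role of the local function $f_i$ in $D$.

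First I would check the graph-theoretic claims. By construction each intermediate $j'_\ell$ (for $1 < \ell < k$) has two positive in-neighbors $j'_{\ell-1}$ and $j_\ell$, the vertex $i$ has two positive in-neighbors $j'_{k-1}$ and $j_k$, while vertices outside $I$ and outside the new intermediates retain their original in-neighborhoods --- which already have in-degree at most $2$ since $I$ was defined as the set of vertices with in-degree at least $3$. Hence $\Delta(D') \leq 2$. Each $i \in I$ of in-degree $\Delta_i$ adds $\Delta_i - 2$ new vertices, so $|V_{D'}| \leq |V_D| + \sum_{i \in I}(\Delta_i - 2) \leq |V_D| + |I|\Delta(D) - 2|I|$.

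For the inequality $\maxpf(D') \geq \maxpf(D)$ (and symmetrically $\minpf(D') \leq \minpf(D)$), I would use the BN $f \in \functions(D)$ given by the hypothesis, in which $f_i$ is the OR function for every $i \in I$, and define $f' \in \functions(D')$ by keeping $f_v$ for $v \in V_D \setminus I$ and choosing $f'_i$ and every $f'_{j'_\ell}$ to be the binary OR of their two positive in-neighbors. A routine induction on $\ell$ shows that at any fixed point $x'$ of $f'$ one has $x'_{j'_\ell} = x'_{j_1} \lor \cdots \lor x'_{j_\ell}$, so $x'_i = x'_{j_1} \lor \cdots \lor x'_{j_k} = f_i(x'|_{V_D})$; the restriction $x' \mapsto x'|_{V_D}$ is then a bijection between the fixed points of $f'$ and those of $f$, with inverse $x \mapsto x'$ defined by $x'_{j'_\ell} = x_{j_1} \lor \cdots \lor x_{j_\ell}$, so $\phi(f') = \phi(f)$.

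For the reverse inequality $\maxpf(D') \leq \maxpf(D)$ (and $\minpf(D') \geq \minpf(D)$), which requires no hypothesis on $f$, I would take an arbitrary $f' \in \functions(D')$ and build $f \in \functions(D)$ with the same number of fixed points. Since each $f'_{j'_\ell}$ and $f'_i$ has two positive in-neighbors on which it depends essentially, each is either AND or OR. Recursively set $g_1(y_1) = y_1$ and $g_\ell(y_1,\ldots,y_\ell) = f'_{j'_\ell}(g_{\ell-1}(y_1,\ldots,y_{\ell-1}), y_\ell)$ for $2 \leq \ell \leq k$; then put $f_v = f'_v$ for $v \in V_D \setminus I$ and $f_i(x) = g_k(x_{j_1},\ldots,x_{j_k})$. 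The projection $x' \mapsto x'|_{V_D}$ is again a bijection between fixed points, with inverse $x'_{j'_\ell} = g_\ell(x_{j_1},\ldots,x_{j_\ell})$. The main technical point, and the only step that needs care, is verifying $f \in \functions(D)$: monotonicity of $g_k$ in each $y_\ell$ is immediate since $g_k$ is a composition of AND and OR gates, and essential dependence on each $y_\ell$ follows from the read-once nature of the formula --- one sets the inputs off the unique path from $y_\ell$ to the root so that the gates on that path reduce to the identity, turning $g_k$ into the projection onto $y_\ell$ --- so every arc $(j_\ell, i)$ receives the correct positive sign in the SID of $f$.
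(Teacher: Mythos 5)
Your proof is correct and complete; note that the paper itself omits the proof of this lemma, calling it ``a simple exercise,'' so there is no official argument to compare against. Your treatment of the only delicate point---that an arbitrary $f'\in\functions(D')$ yields the read-once monotone formula $g_k$, which depends essentially and positively on each $x_{j_\ell}$ (by neutralizing the gates off the path from $y_\ell$ to the root), so that the reconstructed $f$ really lies in $\functions(D)$---is exactly what is needed, and the fixed-point bijections in both directions are handled properly.
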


\begin{figure}
\[
\begin{array}{c}
\begin{tikzpicture}
\node[outer sep=1,inner sep=2,circle,draw,thick] (j1) at (0,2){$j_1$};
\node[outer sep=1,inner sep=2,circle,draw,thick] (j2) at (1.5,2){$j_2$};
\node[outer sep=1,inner sep=2,circle,draw,thick] (j3) at (3,2){$j_3$};
\node[outer sep=1,inner sep=2,circle,draw,thick] (j4) at (4.5,2){$j_4$};
\node[outer sep=1,inner sep=2,circle,draw,thick] (i) at (2.25,0){$i$};
\path[Green,->,thick]
(j1) edge (i)
(j2) edge (i)
(j3) edge (i)
(j4) edge (i)
;
\end{tikzpicture}
\end{array}
\quad\mapsto\quad
\begin{array}{c}
\begin{tikzpicture}
\node[outer sep=1,inner sep=2,circle,draw,thick] (j1) at (0,2){$j_1$};
\node[outer sep=1,inner sep=2,circle,draw,thick] (j2) at (1.5,2){$j_2$};
\node[outer sep=1,inner sep=2,circle,draw,thick] (j3) at (3,2){$j_3$};
\node[outer sep=1,inner sep=2,circle,draw,thick] (j4) at (4.5,2){$j_4$};
\node[outer sep=1,inner sep=2,circle,draw,thick] (j'2) at (1.5,0){$j'_2$};
\node[outer sep=1,inner sep=2,circle,draw,thick] (j'3) at (3,0){$j'_3$};
\node[outer sep=1,inner sep=2,circle,draw,thick] (i) at (4.5,0){$i$};
\path[Green,->,thick]
(j1) edge (j'2)
(j2) edge (j'2)
(j3) edge (j'3)
(j4) edge (i)
(j'2) edge (j'3)
(j'3) edge (i)
;
\end{tikzpicture}
\end{array}
\]
\caption{\label{fig:delta_reduction} Transformation in Lemma~\ref{lem:delta_reduction} used to reduce the maximum in-degree. 
}
\end{figure}
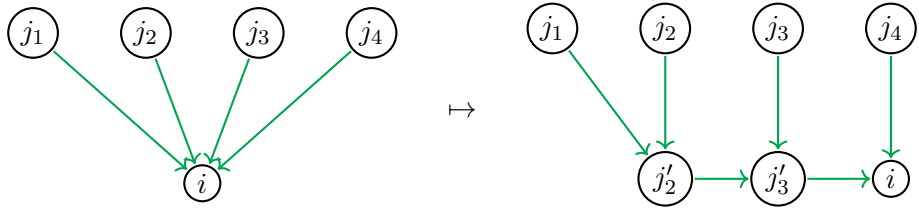

We then deduce the $\NP$-hardness of $\kmaxfpp{2}$. 

\begin{lemma}\label{lem:NP-hard}
\kmaxfpp{2} is $\NP$-hard, even when restricted to SIDs $D$ such that $\Delta(D)\leq 2$.
\end{lemma}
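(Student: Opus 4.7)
The plan is to reduce from \textsc{3-SAT} using the construction $D_\psi$ of Definition~\ref{def:D_psi}, and then post-process with Lemma~\ref{lem:delta_reduction} to enforce the in-degree constraint. Given a 3-CNF formula $\psi$, one first builds $D_\psi$ in polynomial time (it has $4n+2m+1$ vertices). By Lemma~\ref{lem:psi_main}, $\psi$ is satisfiable if and only if $\maxpf(D_\psi)\geq 2$, which already yields $\NP$-hardness of \kmaxfpp{2} without any in-degree restriction.

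Next, I would inspect the in-degrees in $D_\psi$. From Definition~\ref{def:D_psi}, the sources $\lambda_r$ have in-degree $0$, the vertex $\ell_0$ has in-degree $1$, and the vertices $\lambda^+_r,\lambda^-_r,\ell_r,c_s$ all have in-degree exactly $2$. Only the clause vertices $\mu_s$ may have in-degree up to the clause size, so here up to $3$. Crucially, each $\mu_s$ has only positive in-neighbors (the arcs from its literals are positive), so the set $I$ of vertices of in-degree $\geq 3$ in $D_\psi$ satisfies the hypothesis of Lemma~\ref{lem:delta_reduction}. Applying that lemma produces in polynomial time a SID $D'_\psi$ with $\Delta(D'_\psi)\leq 2$.

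It remains to verify that the transformation preserves the maximum number of fixed points, i.e.\ that Lemma~\ref{lem:delta_reduction} applies: some $f\in\functions(D_\psi)$ attaining $\maxpf(D_\psi)$ must have $f_{\mu_s}$ equal to the OR function for every $s$. Here I would invoke Lemma~\ref{lem:psi_1}: for any $z\in\bool^{\LAMBDA}$ it yields a BN $f\in\functions(D_\psi)$ in which every $f_{\mu_s}$ is OR, and $f$ has at least two fixed points whenever $z$ satisfies $\psi$. If $\psi$ is satisfiable, picking a satisfying $z$ gives an $f$ with at least, and hence (by the positive feedback bound and Lemma~\ref{lem:psi_main}) exactly, $\maxpf(D_\psi)=2$ fixed points and all OR at the clause vertices. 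If $\psi$ is not satisfiable, any $z$ yields an $f$ with at least one fixed point, and since $\maxpf(D_\psi)=1$ this is again optimal with OR at every $\mu_s$. Either way the hypothesis of Lemma~\ref{lem:delta_reduction} holds, so $\maxpf(D'_\psi)=\maxpf(D_\psi)$, and combining with Lemma~\ref{lem:psi_main} yields $\maxpf(D'_\psi)\geq 2$ iff $\psi$ is satisfiable.

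I do not anticipate a real obstacle here: the two hard pieces of work have already been done (the construction $D_\psi$ together with Lemma~\ref{lem:psi_main}, and the generic in-degree reduction of Lemma~\ref{lem:delta_reduction}). The only point that needs a little care is checking that a maximum-fixed-point BN with OR at the clause vertices always exists, which is why the explicit BN of Lemma~\ref{lem:psi_1} is useful in both the satisfiable and the unsatisfiable cases. Finally, to extend the bound from $k=2$ to arbitrary fixed $k\geq 2$ with $\Delta\leq 2$, one would chain this reduction with Lemma~\ref{lem:k_to_2}, since adding isolated positive loops preserves the in-degree bound of $2$.
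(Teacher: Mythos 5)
Your proposal is correct and follows essentially the same route as the paper's own proof: reduce from \threeSAT{} via $D_\psi$ and Lemma~\ref{lem:psi_main}, observe that only the clause vertices can have in-degree three and that they have only positive in-neighbors, then apply Lemma~\ref{lem:delta_reduction}, justifying its hypothesis by the explicit OR-based BN of Lemma~\ref{lem:psi_1}. Your extra case analysis (satisfiable versus unsatisfiable) for why a maximum-fixed-point BN with OR at the clause vertices always exists is a slightly more explicit rendering of what the paper states in one sentence, and your closing remark about chaining with Lemma~\ref{lem:k_to_2} matches how the paper handles $k>2$.
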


\begin{proof}
Given a 3-SAT formula $\psi$ with $n$ variables and $m$ clauses, $D_\psi$ has $4n+2m+1$ vertices and, by Lemma~\ref{lem:psi_main}, we have $\maxpf(D_\psi)\geq 2$ if and only if $\psi$ is satisfiable. We have $\Delta(D_\psi)\leq 3$ and since vertices than can have a in-degree of at least three correspond to the $m$ clauses and have only positive in-neighbors, the SID $D'_\psi$ obtained from $D_\psi$ as in Lemma~\ref{lem:delta_reduction} has at most $4n+3m+1$ vertices and $\Delta(D'_\psi)\leq 2$. By Lemmas~\ref{lem:psi_1} and \ref{lem:psi_main}, there is $f\in\functions(D_\psi)$ with $\maxpf(D_\psi)$ fixed points such that $f_i$ is the OR function for every vertex $i$ of in-degree at least three. Hence, by Lemma~\ref{lem:delta_reduction}, we have $\maxpf(D'_\psi)=\maxpf(D_\psi)$ and the lemma is proven.  
\end{proof}

The proof of Theorem~\ref{thm:NP-complete} is an obvious consequence of Lemmas~\ref{lem:NP}, \ref{lem:k_to_2} and \ref{lem:NP-hard}. 

\begin{remark}
Let $D''_\psi$ obtained from $D'_\psi$ by adding: two new vertices $u,v$; a positive arcs $(\ell_0,u)$; a negative arc $(\ell_0,v)$; and, for every $r\in [n]$, the positive arcs $(u,\lambda_r),(v,\lambda_r)$. One can check that these operations does not change the maximum number of fixed points, that is, $\maxpf(D''_\psi)=\maxpf(D'_\psi)$. Since $D''_\psi$ is strongly connected this shows that \kmaxfpp{2} is $\NP$-hard, even when restricted to strongly connected SIDs $D$ with $\Delta(D)\leq 2$. By adapting conveniently Lemma~\ref{lem:k_to_2}, we obtain that, for all $k\geq 2$, \kmaxfpp{k} is $\NP$-hard, even when restricted to strongly connected SIDs $D$ such that $\Delta(D)\leq 2$. 
\end{remark}

\subsection{Extensions}

We now present easy extensions of the key Lemmas \ref{lem:psi_1} and \ref{lem:psi_2},
which will be  useful for the other hardness results. We start with some definitions. 

\begin{definition}[Extensions of $D_\psi$ and partial fixed points]\label{def:extension}
An {\em extension} of $D_\psi$ is \an{} SID $D$ which is the union of $D_\psi$ and \an{} SID $H$ with $V_H\cap V_\psi\subseteq \LAMBDA$; equivalently:  
\begin{itemize}
\item
$D_\psi$ is a subgraph of $D$ (\ie{} each arc of $D_\psi$ is in $D$ with the same sign), and 
\item
in- and out-neighbors of vertices in $U_\psi$ are identical in $D$ and $D_\psi$. 
\end{itemize}
Let $D$ be an extension of $D_\psi$ and $I=V_D\setminus U_\psi$. Given $f\in\functions(D)$, we say that a configuration $z$ on $I$ is a {\em partial fixed point} of $f$ if $f(x)_I=x_I=z$ for some configuration $x$ on $V_D$ (which thus extends $z$); since there is no arc from $U_\psi$ to $I$, if $z$ is a partial fixed point, then $f(x)_I=z$ for {\em every} configuration $x$ on $V_D$ extending $z$.
\end{definition}

Note that every fixed point extends a partial fixed point. Note also that
$D_\psi$ is an extension of itself with $I=\LAMBDA$, and each
$f\in\functions(D_\psi)$ has a unique partial fixed point: the configuration
$z$ on $\LAMBDA$ such that $f_{\LAMBDA}=z$. If $f$ is as in
Lemma~\ref{lem:psi_1} and $\psi$ is satisfied by $z$, then $z$ can be extended
into two (global) fixed points. Conversely, in any case, by
Lemma~\ref{lem:psi_2}, if $z$ can be extended into two (global) fixed points,
then $\psi$ is satisfied by $z\xor\epsilon(f)$. The next two lemmas show that,
more generally, if the SID of $f$ is an extension of $D_\psi$, then these
properties remain true for {\em every} partial fixed point $z$.  

\begin{lemma} \label{lemma:D_psi_inf}\label{lem:ext1}
Let $D$ be an extension of $D_\psi$. Let $f\in\functions(D)$ such that, for all $i\in U_\psi$, $f_i$ is the AND function if $i\in\ELL$ and the OR function otherwise. Let $z$ be a partial fixed point of~$f$. Then $f$ has at least one fixed point extending $z$ and, if $\psi$ is satisfied by $z_{\LAMBDA}$, then $f$ has at least two fixed points extending $z$. 
\end{lemma}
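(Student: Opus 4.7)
The plan is to lift the proof of Lemma~\ref{lem:psi_1} almost verbatim, exploiting the fact that in an extension the vertices of $U_\psi$ keep the same in-neighbors (so their local functions coincide with those used in Lemma~\ref{lem:psi_1}), and exploiting the partial fixed point property to handle the states of $I=V_D\setminus U_\psi$ automatically.

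First I would build the ``trivial'' fixed point. Define $x$ on $V_D$ by $x_I=z$ and $x_{U_\psi}=1$. Since $z$ is a partial fixed point we get $f(x)_I = z = x_I$ for free, from the definition of partial fixed point. For $i\in U_\psi$, the in-neighbors of $i$ in $D$ are exactly the in-neighbors in $D_\psi$ (second bullet of Definition~\ref{def:extension}), so the same computations as in Lemma~\ref{lem:psi_1} apply: $f_{\ell_0}(x)=x_{c_1}=1$, $f_{\ell_r}(x)=x_{\lambda_r^+}\wedge x_{\lambda_r^-}=1$, and every $i\in U_\psi\setminus \ELL$ has some positive in-neighbor $j\in U_\psi$ with $x_j=1$, hence $f_i(x)=1$ because $f_i$ is OR. Thus $f(x)=x$.

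Next, if $z_\LAMBDA$ satisfies $\psi$, I would construct a second fixed point $y$ extending $z$ by reusing the recipe of Lemma~\ref{lem:psi_1}: $y_I=z$ and, for $i\in U_\psi$, $y_i=1$ iff either $i=\mu_s$ for some clause, or $i=\lambda_r^+$ with $z_{\lambda_r}=1$, or $i=\lambda_r^-$ with $z_{\lambda_r}=0$. Again $f(y)_I=z=y_I$ holds because $z$ is a partial fixed point. For $i\in U_\psi$, the verification that $f_i(y)=y_i$ is word-for-word the same as in Lemma~\ref{lem:psi_1}: one checks $f_{\ell_r}(y)=0$ using that exactly one of $y_{\lambda_r^+},y_{\lambda_r^-}$ is $0$; one checks the two literal equations; one uses the hypothesis that $z_\LAMBDA$ satisfies $\psi$ to conclude that every $\mu_s$ admits an in-neighboring literal evaluating to $1$, so that $f_{\mu_s}(y)=1$; and one checks by downward induction on $s\in\{m+1,\dots,1\}$ (with $c_{m+1}$ meaning $\ell_n$) that $f_{c_s}(y)=\neg y_{\mu_s}\vee y_{c_{s+1}}=0$.

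Finally, $x\neq y$ since $x_{\lambda_r^+}=x_{\lambda_r^-}=1$ whereas exactly one of $y_{\lambda_r^+},y_{\lambda_r^-}$ equals $0$, so both $x$ and $y$ are fixed points of $f$ extending $z$. No step is really an obstacle: the only subtlety is recognizing that the partial fixed point property dispenses with any assumption on $f_i$ for $i\in I$, since we never need to specify those local functions to conclude $f(x)_I=x_I$ and $f(y)_I=y_I$.
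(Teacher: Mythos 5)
Your proof is correct and follows essentially the same route as the paper: both arguments rest on the two observations that the partial-fixed-point property settles the coordinates in $I=V_D\setminus U_\psi$ for free, and that vertices of $U_\psi$ have identical in-neighborhoods in $D$ and $D_\psi$, so the AND/OR evaluations carry over unchanged. The only cosmetic difference is that the paper applies Lemma~\ref{lem:psi_1} as a black box to an auxiliary BN $\tilde f\in\functions(D_\psi)$ with $\tilde f_{\LAMBDA}=z_{\LAMBDA}$ and then transplants its fixed points $\tilde x,\tilde y$ onto $V_D$, whereas you replay the explicit constructions from that lemma's proof inline; the resulting configurations are the same.
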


\begin{proof}
Let $I=V_D\setminus U_\psi$ and let $\tilde f\in\functions(D_\psi)$ be defined by $\tilde f_{\LAMBDA}=z_{\LAMBDA}$ and, for all $i\in U_\psi$, $\tilde f_i$ is the AND function if $i\in\ELL$ and the OR function otherwise. By Lemma~\ref{lem:psi_1}, $\tilde f$ has a fixed point $\tilde x$. Let $x$ be the configuration on $V_D$ defined by $x_I=z$ and $x_{U_\psi}=\tilde x_{U_\psi}$.  Since $z$ is a partial fixed point, we have $f(x)_I=x_I$, and since each vertex $i\in U_\psi$ has the same in-neighbors in $D$ and $D_\psi$, and since $f_i$ and $\tilde f_i$ are either both AND functions or both OR functions, we have $f_i(x)=\tilde f_i(\tilde x)=\tilde x_i=x_i$. Thus, $x$ is a fixed point of $f$. By Lemma~\ref{lem:psi_1}, if $\psi$ is satisfied by $z_{\LAMBDA}$, then $\tilde f$ has a fixed point $\tilde y\neq \tilde x$. Similarly, the configuration $y$ on $V_D$ defined by $y_I=z$ and $y_{U_\psi}=\tilde y_{U_\psi}$ is a fixed point of $f$. Since $\tilde y_{\LAMBDA}=\tilde x_{\LAMBDA}=z_{\LAMBDA}$, we have $\tilde x_{U_\psi}\neq \tilde y_{U_\psi}$ and thus $x\neq y$. 
\end{proof}

If $D$ is an extension of $D_\psi$, each vertex $\ell_r$ is of in-degree two in $D$ thus, for each $f\in \functions(D)$, $f_{\ell_r}$ is either the AND function or the OR function. Then, as previously, we define $\epsilon(f)\in \bool^{\LAMBDA}$ by $\epsilon(f)_{\lambda_r}=0$ if $f_{\ell_r}$ is the OR function and $\epsilon(f)_{\lambda_r}=1$ otherwise. 

\begin{lemma}\label{lem:ext2}
Let $D$ be an extension of $D_\psi$. Choose $f\in\functions(D)$ and let $z$ be a partial fixed point of $f$. Then $f$ has at most two fixed points extending $z$, and if $f$ has two fixed points extending $z$, then $\psi$ is satisfied by $z_{\LAMBDA}\xor\epsilon(f)$. 
\end{lemma}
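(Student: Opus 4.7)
The plan is to reduce Lemma~\ref{lem:ext2} to the already-established Lemma~\ref{lem:psi_2} by associating to the pair $(f,z)$ a BN $\tilde f \in \functions(D_\psi)$ whose fixed points inject into the fixed points of $f$ extending $z$. The crucial structural observation is the one packaged into the definition of an extension: every $i\in U_\psi$ has the same in-neighbors, with the same signs, in $D$ and in $D_\psi$, and all of them lie in $V_\psi$. Consequently each local function $f_i$ for $i\in U_\psi$ already depends only on coordinates in $V_\psi$, and can be viewed unambiguously as a function on $\bool^{V_\psi}$.

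First I would set $\tilde f_{\LAMBDA} = z_{\LAMBDA}$ (recall $\LAMBDA \subseteq I = V_D\setminus U_\psi$) and, for $i\in U_\psi$, define $\tilde f_i: \bool^{V_\psi}\to\bool$ as the restriction of $f_i$. Since vertices in $\LAMBDA$ are sources of $D_\psi$, and since the in-neighbors and their signs are preserved for $U_\psi$-vertices, this yields $\tilde f\in\functions(D_\psi)$. I would then note that for each $r\in[n]$, $\tilde f_{\ell_r}$ is literally $f_{\ell_r}$ seen as a function of its two in-neighbors $\lambda^+_r,\lambda^-_r\in U_\psi$, so $\tilde f_{\ell_r}$ is the AND (resp.\ OR) function if and only if $f_{\ell_r}$ is, giving $\epsilon(\tilde f)=\epsilon(f)$.

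Next, the map $x\mapsto x_{V_\psi}$ sends a fixed point $x$ of $f$ extending $z$ to a fixed point of $\tilde f$: indeed $x_{\LAMBDA}=z_{\LAMBDA}=\tilde f_{\LAMBDA}$, and for $i\in U_\psi$ we have $x_i=f_i(x)=\tilde f_i(x_{V_\psi})$. The map is injective because two such fixed points agree on $I$ and can only differ on $U_\psi\subseteq V_\psi$. Therefore the number of fixed points of $f$ extending $z$ is at most $\phi(\tilde f) \leq \maxpf(D_\psi)\leq 2$ (by Lemma~\ref{lem:psi_main}, or directly since $\{\ell_0\}$ is a positive feedback vertex set of $D_\psi$ and the positive feedback bound applies). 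If there are two distinct fixed points extending $z$, then $\tilde f$ has two distinct fixed points and Lemma~\ref{lem:psi_2} yields that $\psi$ is satisfied by $z_{\LAMBDA}\xor\epsilon(\tilde f) = z_{\LAMBDA}\xor\epsilon(f)$.

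The main obstacle, and really the only nontrivial point, is justifying that $\tilde f$ lies in $\functions(D_\psi)$: one needs to check that restricting $f_i$ for $i\in U_\psi$ preserves exactly the signed in-neighbor structure prescribed by $D_\psi$, which is precisely what the definition of extension (coincidence of in- and out-neighbors of vertices in $U_\psi$) is designed to ensure. Once that is in place, the remainder is a straightforward bookkeeping argument invoking the previous lemma.
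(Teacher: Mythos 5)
Your proof is correct and takes essentially the same route as the paper's: both construct the auxiliary network $\tilde f\in\functions(D_\psi)$ by freezing $\LAMBDA$ at $z_{\LAMBDA}$ and restricting the local functions of $U_\psi$ (which is legitimate precisely because the definition of extension preserves the signed in-neighborhoods of $U_\psi$), check that $\epsilon(\tilde f)=\epsilon(f)$, and conclude via Lemma~\ref{lem:psi_2}. The only cosmetic difference is that you derive the bound of two fixed points extending $z$ from the injection into the fixed points of $\tilde f$ together with $\maxpf(D_\psi)\leq 2$, whereas the paper applies Lemma~\ref{lem:positive_feedback_lemma} directly to $D$; both are valid.
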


\begin{proof}
Let $I=V_D\setminus U_\psi$. Since $z\in\bool^I$ and $D\setminus I$ has a positive feedback vertex set of size one, by Lemma~\ref{lem:positive_feedback_lemma}, $f$ has at most two fixed points extending $z$. Let $\tilde f$ be the BN with component set $V_\psi$ defined by: $\tilde f_{\LAMBDA}=z_{\LAMBDA}$ and for all configurations $x$ on $V_D$, $\tilde f(x_{V_\psi})_{U_\psi}=f(x)_{U_\psi}$; there is no ambiguity since all the in-neighbors of vertices in $U_\psi$ are in $V_\psi$. Since vertices in $\LAMBDA$ are sources of $D_\psi$ and since every vertex in $U_\psi$ have exactly the same in-neighbors in $D$ and $D_\psi$ it is clear that $\tilde f\in \functions(D_\psi)$. Suppose now that $f$ has two distinct fixed points $x,y$ extending $z$. We easily check that the restrictions $\tilde x,\tilde y$ of $x,y$ on $V_\psi$ are fixed points of $\tilde f$ and that $\epsilon(\tilde f)=\epsilon(f)$. Since $x_I=y_I=z$ and $x\neq y$, we have $\tilde x\neq\tilde y$ thus, by Lemma \ref{lem:psi_2}, $\psi$ is satisfied by $\tilde x_{\LAMBDA}\xor \epsilon(\tilde f)=z_{\LAMBDA}\xor \epsilon(f)$.
\end{proof}

\section{\MaxFPP{}}\label{section:max_k>=k}

In this section, we study the case where $k$ is not a constant but a parameter of the problem. 
Contrary to the two previous sections, we will see here that the restriction to SIDs with a bounded maximum in-degree reduces the complexity of the problem.

\subsection{{\boldmath$\Delta(D)$\unboldmath} bounded} \label{subsection:borne}

In this first subsection, we study the problem \maxfpp{} for the family of SIDs with a maximum in-degree bounded by a constant $d\geq 2$, and prove that it is $\NPoSPoly$-complete. To introduce this complexity class, let us first recall that problems in $\SPoly$ consist in counting the number of certificates of decision problems in $\NP$ (the number of accepting branches of a non-deterministic polynomial time algorithm). The canonical $\SPoly$-complete problem is \sharpSAT{}, which asks for the number of satisfying assignments of an input $3$-CNF formula. The class $\NPoSPoly$ then corresponds to decision problems computable in polynomial time by a non-deterministic Turing machine with an oracle in $\SPoly$ (a ``black box'' answering any problem in the class $\SPoly$ without using any resource). 

\begin{theorem} \label{theorem:mfpp-Delta_bounded}
When $\Delta(D)\leq d$, \maxfpp{} is $\NPoSPoly$-complete.
\end{theorem}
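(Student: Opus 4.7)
The plan is to establish membership and hardness separately.

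\textbf{Membership in $\NPoSPoly$.} When $\Delta(D) \leq d$, each local function $f_i$ is encoded by a truth table of constant size $2^d$, so an entire $f \in \functions(D)$ fits in $O(n)$ bits. The algorithm guesses such an $f$ non-deterministically, verifies $D_f = D$ in polynomial time by inspecting each truth table for effective dependencies and monotonicities, then computes $\phi(f)$ with a single $\SPoly$-oracle call: the system $f(x) = x$ is equivalent to a polynomial-size CNF (each equation $f_i(x) = x_i$ expands over its $\leq d+1$ relevant variables into a constant number of clauses). Accept iff $\phi(f) \geq k$.

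\textbf{Hardness: construction.} I reduce from \EMSAT{}, which asks, given a 3-CNF $\psi$ on variables $\lambda_1,\dots,\lambda_n$ and $1 \leq s \leq n$, whether some $b \in \bool^s$ satisfies
\[
S(b) := |\{w \in \bool^\LAMBDA : w_{\lambda_r} = b_r \text{ for all } r \leq s,\ \psi(w) = 1\}| > 2^{n-s-1}.
\]
Given such an instance, I define $D_{\psi,s}$ by taking $D_\psi$ of Definition~\ref{def:D_psi} and adding a positive self-loop on each $\lambda_r$ with $r > s$. This is an extension of $D_\psi$ in the sense of Definition~\ref{def:extension}, with $I = V_{D_{\psi,s}} \setminus U_\psi = \LAMBDA$.

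\textbf{Hardness: counting.} For any $f \in \functions(D_{\psi,s})$, each source $\lambda_r$ ($r \leq s$) is assigned a constant $a_r = f_{\lambda_r}$, while each $\lambda_r$ ($r > s$) has in-degree one through a positive self-loop, hence $f_{\lambda_r}(x) = x_{\lambda_r}$. The partial fixed points of $f$ are therefore exactly the $2^{n-s}$ configurations $z \in \bool^\LAMBDA$ with $z_{\lambda_r} = a_r$ for $r \leq s$. By Lemma~\ref{lem:ext2}, each such $z$ extends to at most two fixed points, with equality implying $\psi(z \oplus \epsilon(f)) = 1$; setting $b_r = a_r \oplus \epsilon(f)_{\lambda_r}$ for $r \leq s$ and substituting $w = z \oplus \epsilon(f)$ yields
\[
\phi(f) \leq 2^{n-s} + S(b).
\]
For the special-form $f$ of Lemma~\ref{lem:ext1} (AND at each $\ell_r$, OR at the other vertices of $U_\psi$), Lemma~\ref{lem:ext1} combined with Lemma~\ref{lem:ext2} shows that each partial fixed point extends to exactly two fixed points iff $\psi(z \oplus \epsilon(f)) = 1$, so the bound is attained. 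Since $a$ is chosen freely and $\epsilon(f)$ is a fixed vector (determined by the AND choices on $\ELL$), $b = a \oplus \epsilon(f)|_{[s]}$ ranges over all of $\bool^s$, giving $\maxpf(D_{\psi,s}) = 2^{n-s} + \max_b S(b)$. Setting $k = 2^{n-s} + 2^{n-s-1} + 1$ turns the \EMSAT{} question into $\maxpf(D_{\psi,s}) \geq k$.

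\textbf{Bounded in-degree and main subtlety.} Since $\psi$ is a 3-CNF, $\Delta(D_{\psi,s}) \leq 3$, and the only vertices of in-degree 3 are the clause vertices $\mu_s$, whose in-neighbors are all positive and which the special-form $f$ computes with OR. Lemma~\ref{lem:delta_reduction} thus produces an equivalent SID with $\Delta \leq 2$ and the same $\maxpf$, giving hardness for every $d \geq 2$. The subtle point is that the existential constants $a$ set by $f$ are not directly the E-MajSAT assignment $b$, but are related through the twist $b = a \oplus \epsilon(f)|_{[s]}$; this is harmless because $a$ still ranges freely over $\bool^s$, which is what makes the reduction go through.
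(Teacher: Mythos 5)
Your proof is correct and follows essentially the same route as the paper's: the same guess-and-oracle argument for membership, and the same reduction from \emsat{} via $D_{\psi,s}$ (positive loops on $\lambda_{s+1},\dots,\lambda_n$), with the identity $\maxpf(D_{\psi,s}) = 2^{n-s} + \max_b S(b)$ established through Lemmas~\ref{lem:ext1} and~\ref{lem:ext2} and the in-degree reduced via Lemma~\ref{lem:delta_reduction}. The only differences are cosmetic (a strict versus non-strict majority threshold, and phrasing the $\SPoly$ call as \sharpSAT{} on a CNF encoding of $f(x)=x$ rather than counting fixed points directly), and your handling of the $\epsilon(f)$ twist matches the paper's observation that $\beta=\alpha(z'\xor\epsilon_{\LAMBDA'})\leq\alpha^*$.
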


We first prove the upper bound. 

\begin{lemma}\label{lemma:mfpp-Delta_bounded_in}
When $\Delta(D)\leq d$, \maxfpp{} is in $\NPoSPoly$.
\end{lemma}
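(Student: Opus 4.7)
The plan is to show that \maxfpp{} with $\Delta(D)\leq d$ is decided by a nondeterministic polynomial-time Turing machine equipped with an oracle for a canonical $\SPoly$-complete problem such as \sharpSAT{}. The key observation, already pointed out in the preliminaries, is that when $\Delta(D)\leq d$ for a constant $d$, each local function $f_i$ of a BN $f\in\functions(D)$ depends on at most $d$ inputs (the in-neighbors of $i$ in $D$), so its truth table has at most $2^{d}$ rows. Hence the whole of $f$ admits a description of size $O(n\cdot 2^{d})=O(n)$, which is polynomial in the size of $D$. This is precisely what turns $f$ into a legitimate $\NP$ certificate, in sharp contrast with the unbounded-in-degree case where the description of $f$ is exponential.

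First, I would have the machine nondeterministically guess, for each $i\in V_D$, a truth table for $f_i$ on the $2^{|N_D(i)|}\leq 2^{d}$ assignments to its in-neighbors. It then verifies in polynomial time that $D_f=D$: for each potential arc $(j,i)$, its presence and sign in $D_f$ is determined by comparing $f_i(x)$ and $f_i(x\xor e_j)$ across the constantly many rows of the truth table of $f_i$, and can be matched against the sign prescribed by $D$; any discrepancy causes that branch to reject. This step costs polynomial time and guarantees that surviving branches correspond exactly to BNs $f\in\functions(D)$.

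Second, having committed to a valid $f$, the machine builds in polynomial time a CNF formula $\varphi_f$ whose satisfying assignments are exactly the fixed points of $f$. For every $i\in V_D$, the local constraint $f_i(x)=x_i$ involves only $|N_D(i)|+1\leq d+1$ variables and can be written as a constant-size CNF (one clause per truth-table row violating the constraint). Taking the conjunction over $i\in V_D$ yields $\varphi_f$, which has $n$ variables and $O(n)$ clauses of constant width. The machine then submits $\varphi_f$ to the \sharpSAT{} oracle, retrieves the number $\phi(f)$ of satisfying assignments (\ie{} fixed points of $f$), and accepts if and only if $\phi(f)\geq k$.

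Over all nondeterministic branches, some branch accepts iff there exists $f\in\functions(D)$ with $\phi(f)\geq k$, which is precisely $\maxpf(D)\geq k$. The procedure runs in nondeterministic polynomial time with a single $\SPoly$ query, so \maxfpp{} restricted to $\Delta(D)\leq d$ lies in $\NPoSPoly$. No step poses a real obstacle; the only subtlety worth underlining is that the bounded-in-degree hypothesis is exactly what makes $f$ polynomial-size, and without it the analogous strategy would only place the problem in $\NEXPTIME$.
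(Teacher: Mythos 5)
Your proof is correct and follows essentially the same strategy as the paper's: nondeterministically guess the polynomial-size description of $f$ (possible only because $\Delta(D)\leq d$ bounds each truth table by $2^d$ rows), verify $D_f=D$ in polynomial time, and count the fixed points with one $\SPoly$ oracle call. The only cosmetic difference is that you make the oracle call concrete by building an explicit CNF for \sharpSAT{}, whereas the paper simply observes that counting fixed points of a succinctly given $f$ is itself a $\SPoly$ problem; both realizations are equally valid.
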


\begin{proof}
Let $d\geq 2$ be a fixed integer. Consider the algorithm, which takes as input \an{} SID $D$ with $\Delta(D)\leq d$ and an integer $k$, and proceeds as follows. 
	\begin{enumerate}
	\item 
	Choose non-deterministically a BN $f$ with component set $V_D$ such that, for all $i\in V_D$, the local function $f_i$ only depends on components in $N(i)$; this can be done in linear time since each local function $f_i$ can be represented using $2^{|N(i)|}\leq 2^d$ bits. 
	\item 
	Compute the SID $D_f$ of $f$; this can be done in quadratic time since, to compute the in-neighbors of each $i\in V_D$ and the corresponding signs, we only have to consider $2^{|N(i)|}\leq 2^d$ configurations (for each configuration $x$ on $N(i)$ and $j\in N(i)$, we compare $f_i(\tilde x)$ and $f_i(\tilde x\xor e_j)$ where $\tilde x$ is any configuration on $V_D$ extending $x$). 
	\item 
	Compute $\phi(f)$, the number of fixed points of $f$, with a call to the $\SPoly$ oracle (the problem of deciding if $f$ has a fixed point is trivially in $\NP$: choose non-deterministically a configuration, and accept if and only if it is a fixed point).
	\item 
	Accept if and only if $\phi(f) \geq k$ and $D_f=D$.
	\end{enumerate}
This non-deterministic polynomial time algorithm has an accepting branch if and only $\maxpf(D) \geq k$, so \maxfpp{} is in $\NPoSPoly$. 
\end{proof}

For the lower bound, it is convenient to define $\NPoSPoly$ in another way. The class $\PP$ regroups decision problems decided by a probabilistic Turing machine in polynomial time, with a probability of error less than a half. 
The canonical $\PP$-complete decision problem is \MAJSAT{}, which asks if the majority of the assignments of a given CNF formula are satisfying. The class $\NPoPP$ then corresponds to decision problems computable in polynomial time by a non-deterministic Turing machine with an oracle in  $\PP$. It is well known that $\PolyoSPoly = \PolyoPP$~\cite{P94}, and from that we easily deduce that $\NPoSPoly = \NPoPP$. The following problem is known to be $\NPoPP$-complete~\cite{MJ98,AW21}, and thus also $\NPoSPoly$-complete.  

\begin{quote}\decisionpb
	{\EMSAT ~(\emsat)}
	{a CNF formula $\psi$ over  $\LAMBDA=\{\lambda_1,\dots,\lambda_n\}$ and $s\in [n]$.}
	{does there exist $z'\in\bool^{\LAMBDA'}$, where $\LAMBDA'=\{\lambda_1,\dots,\lambda_s\}$, such that $\psi$ is satisfied by the majority of the assignments $z\in\bool^{\LAMBDA}$ extending $z'$?}
\end{quote}

\begin{lemma}
	\label{lemma:mfpp-Delta_bounded_hard}
	When $\Delta(D)\leq d$, \maxfpp{} is $\NPoSPoly$-hard.
\end{lemma}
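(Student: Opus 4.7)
The plan is to reduce \emsat{} to \maxfpp{} restricted to SIDs with $\Delta(D)\leq d$. Given an instance $(\psi,s)$ of \emsat{} with $\psi$ a CNF over $\LAMBDA=\{\lambda_1,\dots,\lambda_n\}$, I construct \an{} SID $D$ by starting from $D_\psi$ (Definition~\ref{def:D_psi}) and adding a positive loop on each vertex $\lambda_r$ with $r>s$. These loops do not touch the in- and out-neighborhoods of $U_\psi$, so $D$ is an extension of $D_\psi$ in the sense of Definition~\ref{def:extension}, with $I=\LAMBDA$. For any $f\in\functions(D)$, the vertices $\lambda_1,\dots,\lambda_s$ remain sources and thus pin $z':=(f_{\lambda_1},\dots,f_{\lambda_s})\in\bool^{\LAMBDA'}$, whereas each $\lambda_r$ with $r>s$ has the unique local function $x\mapsto x_{\lambda_r}$; hence the partial fixed points of $f$ are exactly the $2^{n-s}$ configurations on $\LAMBDA$ whose restriction to $\LAMBDA'$ is $z'$.

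Define $M(w) = |\{z\in\bool^{\LAMBDA} : z_{\LAMBDA'}=w \text{ and } \psi \text{ is satisfied by } z\}|$ for $w\in\bool^{\LAMBDA'}$. I claim $\maxpf(D)=2^{n-s}+\max_{w}M(w)$. For the lower bound, pick any $z'\in\bool^{\LAMBDA'}$ and choose $f$ with $f_{\LAMBDA'}=z'$, and, for $i\in U_\psi$, $f_i$ equal to the AND function if $i\in\ELL$ and to the OR function otherwise. By Lemma~\ref{lem:ext1} combined with the upper bound of Lemma~\ref{lem:ext2}, each partial fixed point satisfying $\psi$ extends to exactly $2$ fixed points and each non-satisfying one to exactly $1$, yielding $\phi(f)=2M(z')+(2^{n-s}-M(z'))=2^{n-s}+M(z')$. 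For the upper bound, Lemma~\ref{lem:ext2} applied to an arbitrary $f\in\functions(D)$ with $z'=f_{\LAMBDA'}$ and $\epsilon=\epsilon(f)$ gives
\[
\phi(f) \leq 2^{n-s} + \bigl|\{z'' \in\bool^{\LAMBDA\setminus\LAMBDA'} : \psi\text{ is satisfied by $(z'\cup z'')\xor\epsilon$}\}\bigr| = 2^{n-s} + M(z'\xor\epsilon_{\LAMBDA'}),
\]
the second equality following from the change of variable $w''=z''\xor\epsilon_{\LAMBDA\setminus\LAMBDA'}$, which is a bijection of $\bool^{\LAMBDA\setminus\LAMBDA'}$. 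Since $z'\xor\epsilon_{\LAMBDA'}$ ranges over $\bool^{\LAMBDA'}$, this bound is at most $2^{n-s}+\max_{w}M(w)$, and the two bounds match.

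Setting $k = 2^{n-s}+\lfloor 2^{n-s}/2\rfloor + 1$, a number of polynomial bit-size in $n$, we obtain that $\maxpf(D)\geq k$ if and only if some $z'\in\bool^{\LAMBDA'}$ satisfies $M(z')>2^{n-s}/2$, that is, if and only if $(\psi,s)$ is a yes-instance of \emsat{}. To enforce the in-degree constraint, note that the only vertices of $D$ with in-degree greater than $2$ are the clauses $\mu_s$, whose in-neighbors are all positive, and that the $f$ achieving $\maxpf(D)$ constructed above uses the OR function at every $\mu_s$. Lemma~\ref{lem:delta_reduction} thus produces in polynomial time a SID $D'$ with $\Delta(D')\leq 2\leq d$ and $\maxpf(D')=\maxpf(D)$, completing the reduction.

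The delicate step is the upper bound on $\phi(f)$ for an arbitrary $f\in\functions(D)$: one must account for the freedom of choosing AND versus OR at each $\ell_r$, captured by $\epsilon(f)$, and check that this reparametrization only permutes the partial fixed points via a $\xor$-bijection, so that it cannot beat $\max_{w}M(w)$. Everything else amounts to bookkeeping on the construction of $D$ together with invocations of Lemmas~\ref{lem:ext1}, \ref{lem:ext2} and~\ref{lem:delta_reduction}.
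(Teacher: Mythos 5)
Your proof is essentially the paper's own: the same extension $D_{\psi,s}$ of $D_\psi$ with positive loops on $\LAMBDA\setminus\LAMBDA'$, the same identity $\maxpf = 2^{n-s}+\alpha^*$ established via Lemmas~\ref{lem:ext1} and~\ref{lem:ext2} (with the $\xor\epsilon(f)$ reindexing for the upper bound), and the same degree reduction via Lemma~\ref{lem:delta_reduction}. The only discrepancy is the threshold: the paper takes ``majority'' to mean $\alpha^*\geq 2^{n-s-1}$ and sets $k=3\cdot 2^{n-s-1}$, whereas your $k=2^{n-s}+\lfloor 2^{n-s}/2\rfloor+1$ tests the strict inequality $\alpha^*>2^{n-s-1}$ --- an off-by-one that should be aligned with whichever convention for \emsat{} is being cited, but which does not affect the substance of the reduction.
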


\begin{proof}
We present a reduction from \emsat{}. Let $\psi$ be a CNF formula over the set of variables $\LAMBDA=\{\lambda_1,\dots,\lambda_n\}$. Let $s\in [n]$ and $\LAMBDA'=\{\lambda_1,\dots,\lambda_s\}$. Given $z'\in\bool^{\LAMBDA'}$, we denote by $E(z')$ the $2^{n-s}$ assignments $z\in\bool^{\LAMBDA}$ extending $z'$. Then we denote by $\alpha(z')$ the number of $z\in E(z')$ satisfying $\psi$, and $\alpha^*$ is the maximum of $\alpha(z')$ for $z'\in\bool^{\LAMBDA'}$. Thus, $(\psi,s)$ is a true instance of \emsat{} if and only if $\alpha^*\geq 2^{n-s-1}$. 
	
\medskip	
Let $D_{\psi,s}$ obtained from $D_\psi$ by adding, for every $i\in\LAMBDA\setminus \LAMBDA'$, a positive loop on $i$. Thus, $D_{\psi,s}$ is an extension of $D_\psi$. Let us prove that:
\[
\maxpf(D_{\psi,s}) = 2^{n-s} +\alpha^*.
\]

\medskip
Let $z'\in\bool^{\LAMBDA'}$ such that $\alpha(z')=\alpha^*$. Let $f\in\functions(D_{\psi,s})$ such that $f_{\LAMBDA'}=z'$ and, for all $i\in U_\psi$, $f_i$ is the AND function if $i\in\ELL$ and the OR function otherwise. Since each vertex in $ \LAMBDA\setminus \LAMBDA'$ has a positive loop and no other in-coming arc, $E(z')$ is the set of partial fixed points of $f$. Hence, by Lemma~\ref{lem:ext1}, for every $z\in E(z')$, $f$ has a fixed point extending $z$ and, if $\psi$ is satisfied by $z$, then $f$ has another fixed point extending $z$. We deduce that $f$ has at least $2^{n-s}+\alpha(z')$ fixed points, and thus $\maxpf(D_{\psi,s}) \geq  2^{n-s} +\alpha(z')=2^{n-s}+\alpha^*$.  

\medskip
For the other direction, choose any $f\in\functions(D_{\psi,s})$ and set $\epsilon=\epsilon(f)$. Since vertices in $\LAMBDA'$ are sources, $f_{\LAMBDA'}=z'$ for some $z'\in\bool^{\LAMBDA'}$. Since each vertex in $ \LAMBDA\setminus \LAMBDA'$ has a positive loop and no other in-coming arc, $E(z')$ is the set of partial fixed points of $f$. By Lemma \ref{lem:ext2}, for every $z\in E(z')$, $f$ has at most two fixed points extending $z$, and if $f$ has two fixed points extending $z$, then $\psi$ is satisfied by $z\xor\epsilon$. Since every fixed point extends a partial fixed point, we deduce that $f$ has at most $2^{n-s}+\beta$ fixed points, where $\beta$ is the number of $z\in E(z')$ such that $z\xor\epsilon$ is satisfying. Since $\beta=\alpha(z'+\epsilon_{\LAMBDA'})\leq\alpha^*$, we deduce that $f$ has at most $2^{n-s} +\alpha^*$ fixed points. Hence,  $\maxpf(D_{\psi,s}) \leq  2^{n-s} +\alpha^*$. 

\medskip
Consequently, we have $\alpha^*\geq 2^{n-s-1}$ if and only if $\maxpf(D_{\psi,s})\geq 3\cdot 2^{n-s-1}$. Thus, $(\psi,s)$ is a true instance of \emsat{} if and only if $(D_{\psi,s},k)$ is a true instance of \maxfpp{}, where $k=3\cdot 2^{n-s-1}$. 

\medskip
We have $\Delta(D_{\psi,s})\leq 2n$ but, using Lemma~\ref{lem:delta_reduction}, we can obtain from $D_{\psi,s}$ \an{} SID $D'_{\psi,s}$ with $\Delta(D'_{\psi,s})\leq 2$ and at most $4n+2nm+1$ vertices such that $\maxpf(D'_{\psi,s})=\maxpf(D_{\psi,s})$ (for that we use the fact, showed above, that there is $f\in\functions(D_{\psi,s})$ with $\maxpf(D_{\psi,s})$ fixed points where $f_i$ is the OR function for every vertex $i$ of in-degree at least three). 
\end{proof}

\subsection{\boldmath$\Delta(D)$ unbounded\unboldmath} \label{subsection:non borne}

In this second subsection, we study \maxfpp{} without restriction on the maximum in-degree, and prove the following.

\begin{theorem} \label{theorem:mfpp_unbounded}
\maxfpp{} is $\NEXPTIME$-complete.
\end{theorem}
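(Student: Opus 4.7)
The upper bound $\maxfpp \in \NEXPTIME$ has already been established by the generic guess-and-check algorithm presented just after the definition of the decision problems. My plan is therefore to prove the matching $\NEXPTIME$-hardness by a polynomial-time reduction from \SSAT{}, following the roadmap sketched in the synopsis. Given a succinct representation of a 3-CNF formula $\Psi$ with exponentially many variables and clauses, the task is to produce in polynomial time \an{} SID $D_\Psi$ and an integer $k$ such that $\Psi$ is satisfiable if and only if $\maxpf(D_\Psi)\geq k$. The construction grafts the gadget $D_\psi$ of Definition~\ref{def:D_psi} onto a polynomial-size ``clause enumerator'' built from the succinct description of $\Psi$.

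First, I would decode the succinct input into a polynomial-size BN $h$ whose fixed points enumerate, in a uniform way, the triples (clause index, position in that clause, literal at that position) of $\Psi$, so that $D_h$ has polynomial size. Next, I would translate this set of fixed points into a CNF formula $\phi$ whose variables are the vertices of $D_h$, extend $\phi$ by a few interface clauses into a CNF $\psi$, and feed $\psi$ into Definition~\ref{def:D_psi} to obtain $D_\psi$. The SID $D_\Psi$ is then the union of $D_h$ and $D_\psi$: the $\LAMBDA$-sources of $D_\psi$ are identified with a suitable subset of $V_{D_h}$, and the vertices of $V_{D_h}$ playing the role of ``global'' assignment variables of $\Psi$ receive positive self-loops (as in Lemma~\ref{lemma:mfpp-Delta_bounded_hard}). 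The design goal is that partial fixed points of $f\in\functions(D_\Psi)$, in the sense of Definition~\ref{def:extension}, be in bijection with pairs (global assignment $\zeta$ of $\Psi$, enumerated clause of $\Psi$), and that $\psi$ be satisfied by a partial fixed point $z$ precisely when the clause of $\Psi$ encoded by $z$ is satisfied by the assignment encoded by $z$.

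Granting this design, the analysis is a direct application of Lemmas~\ref{lem:ext1} and~\ref{lem:ext2}: for each partial fixed point $z$, the number of global fixed points of $f$ extending $z$ is at most $2$ and reaches $2$, for some appropriate choice of $f$, precisely when $\psi$ is satisfied by $z_\LAMBDA\xor\epsilon(f)$. Summing over partial fixed points, $\maxpf(D_\Psi)$ equals the total number $T$ of partial fixed points plus the maximum, over global assignments $\zeta$ of $\Psi$, of the number of clauses of $\Psi$ satisfied by $\zeta$. Taking $k=T+M$, where $M$ is the number of clauses of $\Psi$, we obtain $\maxpf(D_\Psi)\geq k$ if and only if every clause of $\Psi$ can be simultaneously satisfied, i.e.\ if and only if $\Psi$ is satisfiable. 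The quantities $T$ and $M$ are exponential in the size of the succinct input but have polynomial-length binary encoding, so $k$ can be written down in polynomial time.

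The main obstacle will be the joint engineering of $h$, $\phi$, and the gluing of $D_h$ with $D_\psi$: one must guarantee that partial fixed points are exactly the (assignment, clause) pairs, that the variables of $\psi$ read from a partial fixed point both the clause description (via the $D_h$ part) and the assignment (via the vertices identified with the global $\lambda$-sources of $D_\psi$), and that the extra clauses added to $\phi$ eliminate spurious partial fixed points that do not correspond to legitimate (assignment, clause) pairs. A secondary difficulty is that $D_h$ admits many BNs besides $h$ itself, so the interface clauses must be robust enough that, up to the harmless symmetry captured by $\epsilon(f)$, the intended encoding is preserved for every $f\in\functions(D_\Psi)$; once this book-keeping is in place, the complexity accounting above closes the reduction.
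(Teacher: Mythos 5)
Your overall roadmap (reduction from \SSAT{}, decoding the succinct representation into a polynomial-size circuit, encoding its fixed points by a CNF $\phi$, extending $\phi$ to $\psi$, gluing $D_\psi$ onto the circuit, and closing with Lemmas~\ref{lem:ext1} and~\ref{lem:ext2}) is indeed the paper's, but the central mechanism of the reduction is missing, and the design goal you state in its place cannot be met. You ask for the partial fixed points of $f\in\functions(D_\Psi)$ to be in bijection with pairs (global assignment $\zeta$ of $\Psi$, clause of $\Psi$). Since $\Psi$ has $2^n$ variables, there are $2^{2^n}$ assignments, hence doubly exponentially many such pairs, whereas partial fixed points live on a polynomial-size vertex set and number at most $2^{\mathrm{poly}}$; no such bijection exists, there is no polynomial-size set of vertices ``playing the role of global assignment variables'' on which to put self-loops, and your $k=T+M$ would then need exponentially many bits to write down. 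Even granting the bijection, the accounting would be wrong: adding one extra fixed point per satisfied (assignment, clause) pair yields $T$ plus the \emph{sum} over assignments of the number of satisfied clauses, not the maximum over assignments, so the threshold $T+M$ would not characterize satisfiability.

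The paper's resolution is precisely the point you are missing. The partial fixed points enumerate only the $2^m$ clauses (the input vertices of the circuit structure $C'$ are the clause-index vertices $U$), while the exponentially long assignment $\zeta\in\bool^{\GLAMBDA}$ is stored not in any configuration but in the \emph{choice of the local function} of a single new output vertex $\nu$ that receives \Null{} arcs from all $n$ vertices of $W$: a Boolean function of $n$ inputs has doubly exponentially many possible truth tables, enough to realize $x_W\mapsto\zeta_{\glambda_{x_W}}$ for an arbitrary $\zeta$ (Lemmas~\ref{lem:pro1} and~\ref{lem:pro2}). The interface clauses added to $\omega$ then enforce $x_P\neq 00$ and $x_\rho=x_\nu$, i.e.\ that the literal output by the circuit agrees with $\zeta$, and one gets $\maxpf(D_\Psi)=2^{m+1}$ if and only if $\Psi$ is satisfiable, so $k=2^{m+1}$ (an exact power of two, trivially writable) closes the reduction. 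This use of unbounded in-degree and \Null{} signs to hide a doubly exponential object inside one local function is exactly what separates this $\NEXPTIME$-hard case from the bounded-degree $\NPoSPoly$-hard case of Theorem~\ref{theorem:mfpp-Delta_bounded}; without it your reduction does not go through.
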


In Section~\ref{section:definition} we proved that \maxfpp{} is in $\NEXPTIME$, so we only have to prove its $\NEXPTIME$-hardness; this is done with a reduction from \SSAT{}, which is a basic complete problem for this class  (see Theorem 20.2 of~\cite{P94}).

\medskip
\SSAT{} takes as input a succinct representation of a 3-CNF formula $\Psi$, under the form of a circuit, and asks if $\Psi$ is satisfiable. To give details, we need to introduce circuits and, to work in a single framework, it is convenient to introduce circuits in the language of BNs.

\medskip
Let $C$ be \an{} SID obtained from an acyclic SID by adding a positive loop on some
sources. Let $I$ be the set of vertices with a positive loop, called {\em input
vertices}, and let $O$ the set of vertices of out-degree zero, called {\em
output vertices}. We say that $C$ is a {\em circuit structure}. Then any
$h\in\functions(C)$ is a {\em circuit} that encodes a map $g$ from $\bool^I$ to
$\bool^O$ as follows. Since input vertices have a positive loop and no other
in-coming arc, we have $h(x)_I=x_I$ for every configuration $x$ on $V_C$. Then,
since $C\setminus I$ is acyclic, for any {\em input configuration}
$z\in\bool^I$, $h$ has exactly one fixed point $x$ extending $z$ (and thus $h$
has exactly $2^{|I|}$ fixed points). We then call $\tilde z= x_O$ the {\em
output configuration computed by $h$} from $z$. Thus, $h$ encodes the map $g$
from $\bool^I$ to $\bool^O$ defined by $g(z)=\tilde z$ or, equivalently, by
$g(x_I)=x_O$ for every fixed point $x$ of $h$. We say that $C$ is a {\em basic
circuit structure} if $\Delta(C)\leq 2$ and every vertex $i$ of in-degree two
has only positive in-neighbors, so that $h_i$ is either the AND gate or the OR
gate. In that case, the fixed points of $h$ can be characterized by a short
3-CNF formula. 

\begin{lemma}\label{lem:omega}
Let $C$ be a basic circuit structure and $h\in\functions(C)$. There is a 3-CNF formula $\omega$, which can be computed in polynomial time with respect to $|V_C|$, whose variables are in $V_C$, and containing at most $3|V_C|$ clauses, such that, for all configurations $x$ on $V_C$:
\[
h(x)=x \quad\iff\quad \textrm{$\omega$ is satisfied by $x$}.
\]
\end{lemma}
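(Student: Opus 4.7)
The plan is to build $\omega$ as the conjunction of per-vertex subformulas $\omega_i$ (one for each $i\in V_C$), where $\omega_i$ encodes the local fixed point equation $h_i(x)=x_i$. The key observation is that in a basic circuit structure every vertex has at most two in-neighbors and, when it has two, both are positive; moreover by Proposition~\ref{pro:SID} no vertex has a single zero-signed in-neighbor, so every arc is signed in $\{-1,+1\}$. This leaves only a small number of cases for $h_i$ and each can be encoded by at most three clauses of length at most three.

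Concretely I would do a case analysis on vertex $i$ with respect to $C$.
If $i$ is an input vertex, then $h_i(x)=x_i$ always, so I set $\omega_i=\top$ (no clauses).
If $i$ is a non-input source, then $h_i$ is a Boolean constant $a\in\bool$, and the equation becomes $x_i=a$, encoded by the single unit clause $(x_i)$ if $a=1$ or $(\neg x_i)$ if $a=0$.
If $i$ has a single in-neighbor $j$, then necessarily $\sigma_{ji}\in\{-1,+1\}$ and $h_i(x)=x_j\xor\tilde\sigma_{ji}$; the equation $x_i=x_j\xor\tilde\sigma_{ji}$ is encoded by two clauses of size~$2$ (the two implications of the equivalence, with $x_j$ negated if $\sigma_{ji}=-1$).
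Finally if $i$ has two positive in-neighbors $j,k$, then $h_i$ is either the AND or the OR gate; the equations $x_i=x_j\wedge x_k$ and $x_i=x_j\vee x_k$ are each encoded by three clauses of size at most~$3$ in the standard way (for AND: $(\neg x_i\vee x_j)\wedge(\neg x_i\vee x_k)\wedge(x_i\vee\neg x_j\vee\neg x_k)$; symmetrically for OR).

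In every case $\omega_i$ is a 3-CNF formula with at most three clauses whose variables are among $\{x_i\}\cup\{x_j:j\in N(i)\}\subseteq V_C$, and clearly $x$ satisfies $\omega_i$ if and only if $h_i(x)=x_i$. Setting $\omega=\bigwedge_{i\in V_C}\omega_i$, we obtain a 3-CNF formula over $V_C$ with at most $3|V_C|$ clauses such that $\omega$ is satisfied by $x$ if and only if $h_i(x)=x_i$ for all $i\in V_C$, i.e. $h(x)=x$.

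For polynomial-time computability, the construction simply iterates over the vertices and, for each vertex, inspects its (constant-size) in-neighborhood and the truth table of $h_i$ (given by at most $4$ bits since $|N(i)|\le 2$) to output the corresponding $O(1)$ clauses; the whole procedure therefore runs in time polynomial (in fact linear) in $|V_C|$. There is no real obstacle here: the only thing to be slightly careful about is to handle the in-degree $1$ case by taking the sign of the unique in-arc into account, and to use Proposition~\ref{pro:SID} to rule out zero-signed arcs so that the four listed cases are exhaustive.
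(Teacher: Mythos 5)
Your proposal is correct and follows essentially the same argument as the paper: a per-vertex case analysis (input vertex, constant source, in-degree one with signed arc, in-degree two with AND/OR gate) producing at most three clauses of width at most three per vertex, with exactly the same clause gadgets. The only cosmetic difference is that you explicitly invoke Proposition~\ref{pro:SID} to exclude a lone \Null{} in-neighbor, which the paper handles implicitly via its earlier discussion of vertices of in-degree at most two.
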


\begin{proof}
Since input vertices have a positive loop and no other incoming arc, we only
have to prove that $\omega$ is satisfied by $x$ if and only if $h_i(x)=x_i$
for all non-input vertex $i$. We construct $\omega$ as follows, for every
non-input vertex $i$, and simultaneously prove the lemma.
\begin{itemize}
\item 
If $i$ is a source and $h_i$ is the 1 constant function, we add the clause $\{i^+\}$. This clause is satisfied by $x$ if and only if $x_i=1=h_i(x)$.
\item 
If $i$ is a source and $h_i$ is the 0 constant function, we add the clause $\{i^-\}$. This clause is satisfied by $x$ if and only if $x_i=0=h_i(x)$.
\item 
If $i$ has a unique in-neighbor $j$, which is positive, we add the clauses
$\{i^+,j^-\}$ and $\{i^-,j^+\}$. These clauses are simultaneously
satisfied by $x$ if and only if $x_i=x_j=h_i(x)$.
\item 
If $i$ has a unique in-neighbor $j$, which is negative, we add the clauses
$\{i^+,j^+\}$ and $\{i^-,j^-\}$. These clauses are simultaneously
satisfied by $x$ if and only if $x_i=\neg x_j=h_i(x)$. 
\item
If $i$ has two in-neighbors $j,k$, and $h_i$ is the AND gate, we add three clauses: $\{i^-,j^+\}$, $\{(i^-,k^+\}$ and $\{i^+,j^-,k^-\}$. These clauses are simultaneously satisfied by $x$ if and only if $x_i=x_j\land x_k=h_i(x)$.
\item 
If $i$ has two in-neighbors $j,k$, and $h_i$ is the OR gate, we add three clauses: $\{i^+,j^-\}$, $\{i^+,k^-\}$ and $\{i^-,j^+,k^+\}$. These clauses are simultaneously satisfied by $x$ if and only if $x_i=x_j\lor x_k=h_i(x)$.
\end{itemize}
\end{proof}


Consider a 3-CNF formula $\Psi$ over a set $\GLAMBDA$ of $2^n$ variables and with a set $\M$ of $2^m$ clauses. To simplify notations, we take two sets $W,U$ of size $n$ and $m$ respectively, and~write:
\[
\GLAMBDA=\{\glambda_w\}_{w\in \bool^W}, \quad \M=\{M_u\}_{u\in \bool^U},\qquad M_u=(M_{u,01},M_{u,10},M_{u,11}).
\]
Hence, the clauses of $\Psi$ are regarded, for convenience, as ordered triples of literals (not necessarily all distinct) instead of subsets of literals of size at most $3$.  

\medskip
A {\em succinct representation} of $\Psi$ is then a couple $(h,C)$, where $C$ is a basic circuit structure and $h\in\functions(C)$, with the following specifications: the set of input vertices is $U\cup P$, where $P=\{p_1,p_2\}$, and the set of output vertices is $W\cup\{\rho\}$ (these four sets are pairwise disjoint); and for every fixed point $x$ of $h$ with $x_P\neq 00$, we have:
\begin{equation}\tag{$*$}\label{eq:h}
M_{x_U,x_P}=
\left\{
\begin{array}{ll}
\glambda^+_{x_W}&\textrm{if $x_\rho=1$},\\
\glambda^-_{x_W}&\textrm{if $x_\rho=0$}.
\end{array}
\right.
\end{equation}
Hence, each input configuration with $x_P\neq 00$ corresponds to a clause
($x_U$) and a valid position in that clause ($x_P$), and the output
configuration gives the corresponding positive or negative literal: the
involved variable ($x_W$) and the polarity of the literal ($x_\rho$). See
Figure \ref{fig:h} for an illustration. In all the following, $(h,C)$ is a
given succinct representation of $\Psi$. 

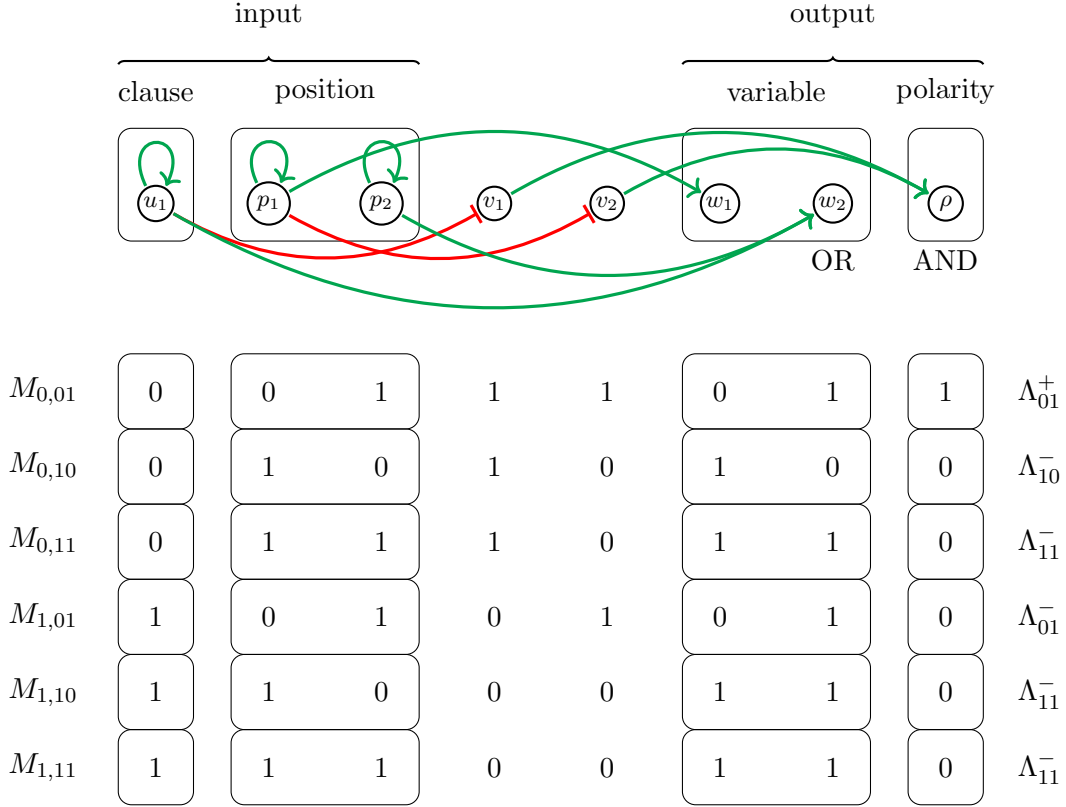
\begin{figure}
\[
\begin{tikzpicture}[scale=1]
\node[outer sep=1,inner sep=1,circle,draw,thick] (u1) at (2,8){\footnotesize$u_1$};
\node[outer sep=1,inner sep=2,circle,draw,thick] (1) at (3.5,8){\footnotesize$p_1$};
\node[outer sep=1,inner sep=2,circle,draw,thick] (2) at (5,8){\footnotesize$p_2$};
\node[outer sep=1,inner sep=1,circle,draw,thick] (v1) at (6.5,8){\footnotesize$v_1$};
\node[outer sep=1,inner sep=1,circle,draw,thick] (v2) at (8,8){\footnotesize$v_2$};
\node[outer sep=1,inner sep=1,circle,draw,thick] (w1) at (9.5,8){\footnotesize$w_1$};
\node[outer sep=1,inner sep=1,circle,draw,thick] (w2) at (11,8){\footnotesize$w_2$};
\node[outer sep=1,inner sep=2,circle,draw,thick] (rho) at (12.5,8){\footnotesize$\rho$};
\node at (11,7.25){\textrm{OR}};
\node at (12.5,7.25){\textrm{AND}};
\draw[rounded corners=5] (1.5,7.5) rectangle (2.5,9);
\node at (2,9.5){clause};
\draw[rounded corners=5] (3,7.5) rectangle (5.5,9);
\node at (4.25,9.5){position};
\draw[decoration={brace,mirror,raise=10},decorate,thick] (5.5,9.5) --  (1.5,9.5);
\node at (3.5,10.5){input};
\draw[decoration={brace,mirror,raise=10},decorate,thick] (13,9.5) --  (9,9.5);
\node at (11,10.5){output};
\draw[rounded corners=5] (9,7.5) rectangle (11.5,9);
\node at (10.25,9.5){variable};
\draw[rounded corners=5] (12,7.5) rectangle (13,9);
\node at (12.5,9.5){polarity};
\draw[Green,->,very thick] (u1.120) .. controls (1.5,9) and (2.5,9) .. (u1.60);
\draw[Green,->,very thick] (1.120) .. controls (3,9) and (4,9) .. (1.60);
\draw[Green,->,very thick] (2.120) .. controls (4.5,9) and (5.5,9) .. (2.60);
\path[Green,->,very thick]
(u1) edge[red,bend right=30,-|] (v1)
(1) edge[red,bend right=30,-|] (v2)
(1) edge[bend left=30] (w1)
(u1) edge[bend right=30] (w2)
(2) edge[bend right=30] (w2)
(v1) edge[bend left=30] (rho)
(v2) edge[bend left=30] (rho)
;
\node at (0.5,5.5){$M_{0,01}$};
\node at   (2,5.5){0};
\node at (3.5,5.5){0};
\node at   (5,5.5){1};
\node at (6.5,5.5){1};
\node at   (8,5.5){1};
\node at (9.5,5.5){0};
\node at  (11,5.5){1};
\node at(12.5,5.5){1};
\node at (13.75,5.5){$\Lambda^+_{01}$};
\draw[rounded corners=5](1.5,5) rectangle (2.5,6);
\draw[rounded corners=5]  (3,5) rectangle (5.5,6);
\draw[rounded corners=5]  (9,5) rectangle(11.5,6);
\draw[rounded corners=5] (12,5) rectangle  (13,6);
\node at (0.5,4.5){$M_{0,10}$};
\node at   (2,4.5){0};
\node at (3.5,4.5){1};
\node at   (5,4.5){0};
\node at (6.5,4.5){1};
\node at   (8,4.5){0};
\node at (9.5,4.5){1};
\node at  (11,4.5){0};
\node at(12.5,4.5){0};
\node at (13.75,4.5){$\Lambda^-_{10}$};
\draw[rounded corners=5](1.5,4) rectangle (2.5,5);
\draw[rounded corners=5]  (3,4) rectangle (5.5,5);
\draw[rounded corners=5]  (9,4) rectangle(11.5,5);
\draw[rounded corners=5] (12,4) rectangle  (13,5);
\node at (0.5,3.5){$M_{0,11}$};
\node at   (2,3.5){0};
\node at (3.5,3.5){1};
\node at   (5,3.5){1};
\node at (6.5,3.5){1};
\node at   (8,3.5){0};
\node at (9.5,3.5){1};
\node at  (11,3.5){1};
\node at(12.5,3.5){0};
\node at (13.75,3.5){$\Lambda^-_{11}$};
\draw[rounded corners=5](1.5,3) rectangle (2.5,4);
\draw[rounded corners=5]  (3,3) rectangle (5.5,4);
\draw[rounded corners=5]  (9,3) rectangle(11.5,4);
\draw[rounded corners=5] (12,3) rectangle  (13,4);
\node at (0.5,2.5){$M_{1,01}$};
\node at   (2,2.5){1};
\node at (3.5,2.5){0};
\node at   (5,2.5){1};
\node at (6.5,2.5){0};
\node at   (8,2.5){1};
\node at (9.5,2.5){0};
\node at  (11,2.5){1};
\node at(12.5,2.5){0};
\node at (13.75,2.5){$\Lambda^-_{01}$};
\draw[rounded corners=5](1.5,2) rectangle (2.5,3);
\draw[rounded corners=5]  (3,2) rectangle (5.5,3);
\draw[rounded corners=5]  (9,2) rectangle(11.5,3);
\draw[rounded corners=5] (12,2) rectangle  (13,3);
\node at (0.5,1.5){$M_{1,10}$};
\node at   (2,1.5){1};
\node at (3.5,1.5){1};
\node at   (5,1.5){0};
\node at (6.5,1.5){0};
\node at   (8,1.5){0};
\node at (9.5,1.5){1};
\node at  (11,1.5){1};
\node at(12.5,1.5){0};
\node at (13.75,1.5){$\Lambda^-_{11}$};
\draw[rounded corners=5](1.5,1) rectangle (2.5,2);
\draw[rounded corners=5]  (3,1) rectangle (5.5,2);
\draw[rounded corners=5]  (9,1) rectangle(11.5,2);
\draw[rounded corners=5] (12,1) rectangle  (13,2);
\node at (0.5,0.5){$M_{1,11}$};
\node at   (2,0.5){1};
\node at (3.5,0.5){1};
\node at   (5,0.5){1};
\node at (6.5,0.5){0};
\node at   (8,0.5){0};
\node at (9.5,0.5){1};
\node at  (11,0.5){1};
\node at(12.5,0.5){0};
\node at (13.75,0.5){$\Lambda^-_{11}$};
\draw[rounded corners=5](1.5,0) rectangle (2.5,1);
\draw[rounded corners=5]  (3,0) rectangle (5.5,1);
\draw[rounded corners=5]  (9,0) rectangle(11.5,1);
\draw[rounded corners=5] (12,0) rectangle  (13,1);

\end{tikzpicture}
\]
{\caption{\label{fig:h}
A circuit $h\in F(C)$ encoding a 3-CNF formula $\Psi$ over the set of variables $\GLAMBDA=\{\Lambda_{00},\Lambda_{01},\Lambda_{10},\Lambda_{11}\}$ and containing the clauses $M_0=(M_{0,01},M_{0,10},M_{0,11})=(\Lambda^+_{01},\Lambda^-_{10},\Lambda^-_{11})$ and $M_1=(M_{1,01},M_{1,10},M_{1,11})=(\Lambda^-_{01},\Lambda^-_{11},\Lambda^-_{11})$. In other words $\Psi=(\Lambda_{01}\lor\neg\Lambda_{10}\lor\neg \Lambda_{11})\land(\neg \Lambda_{01}\lor\neg \Lambda_{11})$; this formula is equivalent to the formula $\psi$ of Figure~\ref{fig:D_psi}. The circuit structure $C$, which is basic, is drawn on the top; the set of input vertices is $U\cup P$ with $U=\{u_1\}$ and $P=\{p_1,p_2\}$, and the set of output vertices is $W\cup\{\rho\}$ with $W=\{w_1,w_2\}$. The circuit $h$ is the BN in $\functions(C)$ such that $h_{w_2}$ is the OR function and $h_\rho$ is the AND function (there is a unique possible local function for the other vertices). The 6 fixed points $x$ of $h$ with $x_P\neq 00$ are displayed. They encode the formula $\Psi$ as indicated. 
}}
\end{figure}

\medskip
The next definition provides some flexibility regarding the representation of $\Psi$. 

\begin{definition}[$\epsilon$-fixed points and consistency]
Let $C,C'$ be circuit structures with $V_C\subseteq V_{C'}$,
$h'\in\functions(C')$ and $\epsilon$ be a configuration on $V_{C'}$. A
configuration $x$ on $V_{C'}$ is an {\em $\epsilon$-fixed point} of $h'$ if
$x\xor\epsilon$ is a fixed point of $h'$. We say that $h'$ is {\em
$\epsilon$-consistent} if every $\epsilon$-fixed point $x$ of $h'$ extends a
fixed point of $h$ and has a valid position, that is, $x_P\neq 00$ (thus
$(*)$ holds for every $\epsilon$-fixed point $x$, so $h'$ embeds in some way
the calculations performed~by~$h$). We say that $h'$ is {\em consistent} if
it is $\epsilon$-consistent with $\epsilon=0$.
\end{definition}

We will construct, from the succinct representation $(h,C)$ of $\Psi$, \an{} SID
$D_{\Psi}$ with a number of vertices linear in $|V_C|$ such that
$\maxpf(D_\Psi)\geq 2^{m+1}$ if and only if $\Psi$ is satisfiable; and since
$D_\psi$ will have a positive feedback vertex set of size $m+1$, the inequality
in this equivalence is actually an equality (by Theorem~\ref{thm:bound}).
We first give a sketch of the construction, proceeding in three steps. 

\medskip
First, we construct a circuit structure $C'$ over $C$, whose input
configurations correspond to the clauses of $\Psi$ (so the corresponding BNs
have $2^m$ fixed points, one per clause) and with two output vertices: $\rho$
(which is already in $C$) and $\nu$. We then show two properties.
\begin{enumerate}
\item[$(1)$]
Given an assignment $\zeta\in\bool^{\GLAMBDA}$, there is a consistent BN
$h'\in\functions(C')$ such that, for every fixed point $x$, the clause
$M_{x_U}$ is satisfied by $\zeta$ if and only if $x_\rho=x_\nu$.
\item[$(2)$]
Conversely, for every $\epsilon$-consistent BN $h'\in\functions(C')$ there is
an assignment $\zeta\in\bool^{\GLAMBDA}$ such that, for every
$\epsilon$-fixed point $x$, the clause $M_{x_U}$ is satisfied by $\zeta$
whenever $x_\rho=x_\nu$.
\end{enumerate}

\medskip
The trick is then to encode the consistency conditions and the condition
``$x_\rho=x_\nu$'' into a 3-CNF formula. First consider the formula $\omega$
characterizing the fixed points of $h$. Then configuration $x$ on $V_{C'}$
extends a fixed point of $h$ if and only if $\omega$ is satisfied by $x_{V_C}$.
By adding few clauses in $\omega$, we can then obtain a formula $\psi$ which is
satisfied by $x$ if and only if $x$ extends a fixed point of $h$, $x_P\neq 00$
and $x_\rho=x_\nu$. In particular, if $\psi$ is satisfied by the $2^m$
$\epsilon$-fixed points of $h'$, then $h'$ is $\epsilon$-consistent. Now, if
$\Psi$ is satisfied by $\zeta$, then $\psi$ is satisfied by the $2^m$ fixed
points of the consistent BN $h'$ described in $(1)$. Conversely, for any $h'\in
\functions(C')$ and $\epsilon$, if $\psi$ is satisfied by the $2^m$
$\epsilon$-fixed points of $h'$, then $\Psi$ is satisfied by the assignment
$\zeta$ described in $(2)$. Hence, we obtain:
\begin{enumerate}
\item[$(3)$]
The following conditions are equivalent: (a) $\Psi$ is satisfiable; (b) there is $h'\in\functions(C')$ such that $\psi$ is satisfied by the $2^m$ fixed points of $h'$; (c) there are $h'\in\functions(C')$ and $\epsilon$ such that $\psi$ is satisfied by the $2^m$ $\epsilon$-fixed points of $h'$.
\end{enumerate}

\medskip
The last step is to consider $D_\psi$. Actually, we define $D_{\Psi}$ as the
extension of $D_\psi$ resulting from the union of $C'$ and $D_\psi$. If $\Psi$
is satisfiable, there is a BN $f\in\functions(D_\Psi)$, satisfying the
conditions of   Lemma~\ref{lem:ext1}, whose partial fixed points are the fixed
points of the BN $h'\in\functions(C')$ described in (b). Then, since the $2^m$
partial fixed points are satisfying assignments of $\psi$, they give rise to
$2^{m+1}$ fixed points for $f$. Conversely, for any $f\in\functions(D_\Psi)$,
the partial fixed points of $f$ are the fixed points of some
$h'\in\functions(C')$, and if $f$ has $2^{m+1}$ fixed points, then each fixed
point $x$ of $h'$ gives rise to two fixed points of $f$. We then deduce from
Lemma~\ref{lem:ext2} that this is possible only if there is $\epsilon$ such
that, for every fixed point $x$ of $h'$, $\psi$ is satisfied by
$x\xor\epsilon$. This is equivalent to say that $\psi$ is satisfied by the
$2^m$ $\epsilon$-fixed points of $h'$, so (c) holds and we deduce that $\Psi$
is satisfiable. 

\medskip
We now proceed to the details. 

\begin{definition}[$C'$]\label{def:C'}
We denote by $C'$ the circuit structure obtained from $C$ by removing the positive loops on vertices $p_1,p_2$, and by adding three new vertices, $s_1,s_2,\nu$, and the following arcs:
	\begin{itemize}
	\item a \Null{} arc $(j,p_1)$ and a \Null{} arc $(j,p_2)$, for all $j\in U\cup\{s_1,s_2\}$; 
	\item a \Null{} arc $(j,\nu)$, for all $j\in W\cup\{s_1,s_2\}$. 
	\end{itemize}
\end{definition}

So $C'$ is a circuit structure where $U$ is the set of input vertices (input configurations correspond to clauses of $\Psi$)  and $\{\rho,\nu\}$ is the set of output vertices. The vertices $s_1,s_2$ are sources. See the top of Figure~\ref{fig:D_Psi} for an illustration. 

\medskip
The following lemma is a formal statement of the property $(1)$ discussed above.
	
\begin{lemma}\label{lem:pro1}
For every assignment $\zeta\in\bool^{\GLAMBDA}$ there is a consistent circuit $h'\in \functions(C')$ such that, for every fixed point $x$ of $h'$, 
\[
\textrm{$M_{x_U}$ is satisfied by $\zeta$}
\quad\iff\quad
x_\rho=x_\nu. 
\]
\end{lemma}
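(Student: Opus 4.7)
The plan is to construct, for a given assignment $\zeta \in \bool^{\GLAMBDA}$, a consistent circuit $h' \in \functions(C')$ tailored so that its unique fixed point attached to each clause encodes whether the clause is satisfied by $\zeta$.

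First I would choose, for every $u \in \bool^U$, a \emph{distinguished position} $\pi(u) \in \{01,10,11\}$: if the clause $M_u$ is satisfied by $\zeta$, let $\pi(u)$ be any position $p$ such that the literal $M_{u,p}$ is satisfied by $\zeta$; otherwise set $\pi(u):=01$, and note that then $M_{u,01}$ is necessarily falsified by $\zeta$. Next I would define $h'$ componentwise: put $h'_i := h_i$ for every $i \in V_C\setminus\{p_1,p_2\}$, and $h'_{s_1} := h'_{s_2} := 0$ (constant $0$, so that every fixed point of $h'$ satisfies $x_{s_1}=x_{s_2}=0$). It remains to design $h'_{p_1}, h'_{p_2}, h'_\nu$ so that on the ``fixed-point hyperplane'' $\{x : x_{s_1}=x_{s_2}=0\}$ one has $h'_{p_i}(x) = \pi(x_U)_i$ and $h'_\nu(x) = \zeta_{\glambda_{x_W}}$, while making every prescribed in-neighbor non-monotone. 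A natural choice is the conditional XOR
\[
h'_{p_i}(x) = \begin{cases} \pi(x_U)_i & \text{if } x_{s_1} = x_{s_2},\\ \bigoplus_{u \in U} x_u & \text{otherwise,} \end{cases}
\qquad
h'_\nu(x) = \begin{cases} \zeta_{\glambda_{x_W}} & \text{if } x_{s_1} = x_{s_2},\\ \bigoplus_{w \in W} x_w & \text{otherwise,} \end{cases}
\]
possibly tuned further so that the two branches differ at enough configurations to force $D_{h'} = C'$.

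Since the input vertices of $C'$ are exactly $U$, $h'$ has one fixed point per $u \in \bool^U$. At the fixed point $x$ with $x_U=u$, the constant functions at $s_1,s_2$ force $x_{s_1}=x_{s_2}=0$, hence $x_P = \pi(u) \neq 00$ and $x_\nu = \zeta_{\glambda_{x_W}}$; and since $h'_i=h_i$ on $V_C\setminus\{p_1,p_2\}$ while $p_1,p_2$ are input vertices of $C$, the restriction $x_{V_C}$ is the unique fixed point of $h$ extending $(u,\pi(u)) \in \bool^{U\cup P}$. This yields consistency. Applying Property~(\ref{eq:h}) to $x_{V_C}$ gives $M_{u,\pi(u)} = \glambda^+_{x_W}$ when $x_\rho=1$ and $M_{u,\pi(u)} = \glambda^-_{x_W}$ when $x_\rho=0$; in both cases $M_{u,\pi(u)}$ is satisfied by $\zeta$ iff $x_\rho = \zeta_{\glambda_{x_W}} = x_\nu$. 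Since the choice of $\pi$ makes $M_{u,\pi(u)}$ satisfied by $\zeta$ exactly when $M_u$ is, the desired equivalence $x_\rho = x_\nu \Leftrightarrow M_{x_U}$ satisfied by $\zeta$ follows.

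The delicate step will be the SID verification for $h'_{p_1}, h'_{p_2}, h'_\nu$: every arc of $C'$ entering them must become a zero arc of $D_{h'}$ and no extra arc may appear. The auxiliary sources $s_1,s_2$ are included precisely for this purpose: toggling $s_1$ (or $s_2$) switches between the two branches of the conditional XOR, flipping the output with opposite signs depending on the other in-neighbors, and the XOR summand in the ``otherwise'' branch forces non-monotone dependence on each variable of $U$ (resp.\ $W$). Any unfavourable coincidence (e.g.\ $\pi_i$ happening to equal $\bigoplus_{u \in U} x_u$ everywhere) can be avoided by adjusting the perturbation using the second source as an extra bit of freedom, so that Proposition~\ref{pro:SID} is satisfied and the SID comes out exactly as $C'$.
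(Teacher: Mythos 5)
Your construction follows the same strategy as the paper's: keep $h$ on the circuit gates, make $s_1,s_2$ constant $0$, let $p_1,p_2$ select (at the fixed points) the position of a satisfied literal when the clause is satisfied and a falsified one otherwise, let $\nu$ output $\zeta_{\glambda_{x_W}}$, and use an XOR on an off-fixed-point branch to force the prescribed \Null{} arcs. The fixed-point analysis and the derivation of the equivalence from~(\ref{eq:h}) are correct and match the paper's.

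The gap is exactly where you locate it, and it is not resolved: with the formulas as written, $D_{h'}=C'$ can fail. Two concrete failures. First, the arc $(u,p_i)$ for $u\in U$: when $|U|=1$, in the branch $x_{s_1}\neq x_{s_2}$ the value $\bigoplus_{u\in U}x_u$ is just $x_u$, so flipping $x_u$ from $0$ to $1$ only ever increases the output there; whether the arc comes out \Null{} then depends entirely on $\pi(\cdot)_i$ being decreasing somewhere, which is not guaranteed. Second, the arc $(s_j,p_i)$: flipping $s_j$ toggles the branch, and the output changes only where $\pi(x_U)_i\neq\bigoplus_{u\in U}x_u$; if these two functions coincide (which can happen for particular $\Psi,\zeta$), the arc disappears altogether. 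Saying this ``can be avoided by adjusting the perturbation'' is not a proof. The paper's fix is clean and worth adopting: take the XOR branch to be $\bigoplus_{i\in U\cup S}x_i$ (resp.\ $\bigoplus_{i\in W\cup S}x_i$ for $\nu$) and trigger it on $x_S\neq 00$ rather than $x_{s_1}\neq x_{s_2}$. Then for each in-neighbor $j\in U\cup S$ (resp.\ $W\cup S$) one can realize both an increase and a decrease entirely inside the XOR branch by choosing the parity of the remaining summands (the three values $x_S\in\{01,10,11\}$ give enough freedom even when $|U|=1$), so every prescribed arc is effective and \Null{} with no case analysis on $\pi$ or $\zeta$, and the semantic branch is used exactly on the fixed-point regime $x_S=00$.
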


\begin{proof}
For every configuration $x$ on $V_{C'}$, we define $h'(x)$ componentwise as follows. Since every non-input vertex $i$ of $C$ has exactly the same incoming arcs in $C'$ and $C$, we can set $h'_i(x)=h_i(x_{V_C})$. Next we define $h'_{s_1}$ and $h'_{s_2}$ as the 0 constant function. Since each $i\in U$ has a positive loop and nothing else, we necessarily set $h'_i(x)=x_i$. It remains to define the local functions of $p_1,p_2$ and $\nu$. Let $S=\{s_1,s_2\}$. 

\medskip
First, we set  
\[
\begin{array}{l}
h'_{p_1}(x)=
\left\{
\begin{array}{ll}
\bigoplus_{i \in U\cup S} x_i & \textrm{if $x_S\neq 00$},\\
0&\textrm{if $x_S=00$ and $M_{x_U,01}$ is satisfied by $\zeta$},\\
1&\textrm{otherwise.}
\end{array}
\right.
\\
~
\\
h'_{p_2}(x)=
\left\{
\begin{array}{ll}
\bigoplus_{i \in U\cup S} x_i & \textrm{if $x_S\neq 00$},\\
0&\textrm{if $x_S=00$ and $M_{x_U,10}$ is satisfied by $\zeta$ but not $M_{x_U,01}$},\\
1&\textrm{otherwise.}
\end{array}
\right.
\end{array}
\]
It is clear $h'_{p_1}$ and $h'_{p_2}$ only depend on components in $U\cup S$, and the first case in each definition ensures that these dependencies are effective and neither positive nor negative, so that $h'_{p_1}\in \functions_{p_1}(C')$ and $h'_{p_2}\in\functions_{p_2}(C')$. Furthermore, if $h'(x)=x$ then $x_S=00$, and it is easy to check that this implies $x_P\neq 00$ and the following equivalence:
\[
\text{$M_{x_U}$ is satisfied by $\zeta$} \quad\iff\quad \text{$M_{x_U,x_P}$ is satisfied by $\zeta$}.  
\]

\medskip
Second, we set 
\[
h'_\nu(x)=
\left\{
\begin{array}{ll}
\bigoplus_{i \in W\cup S} x_i & \textrm{if $x_S\neq 00$}\\
\zeta_{\glambda_{x_W}}&\textrm{otherwise.}
\end{array}
\right.
\]
Similarly, $h'_{\nu}$ only depends on components in $W\cup S$, and the first case ensures that these dependencies are effective and neither positive nor negative, so that $h'_{\nu}\in \functions_{\nu}(C')$. 

\medskip
Suppose that $h'(x)=x$. As said above, we have $x_S=00$ and $x_P\neq 00$. Furthermore, for each non-input vertex $i$ of $C$ we have $h_i(x_{V_C})=h'_i(x)=x_i$, thus $x_{V_C}$ is a fixed point of $h$. Hence, $h'$ is consistent, and $M_{x_U,x_P}$ is $\glambda^+_{x_W}$ if $x_\rho=1$ and $\glambda^-_{x_W}$ otherwise. Since $x_\nu=h'_\nu(x)=\zeta_{\glambda_{x_{W}}}$, the literal $M_{x_U,x_P}$ is satisfied by $\zeta$ if and only if $x_\rho=x_\nu$, and by the equivalence above, the clause $M_{x_U}$ is satisfied by $\zeta$ if and only if $x_\rho=x_\nu$.
\end{proof}	

We now prove a kind of converse, which is a formal statement of the property $(2)$ discussed above. 

\begin{lemma}\label{lem:pro2}
For every $\epsilon$-consistent circuit $h'\in \functions(C')$ there is an assignment $\zeta\in\bool^{\GLAMBDA}$ such that, for every $\epsilon$-fixed point $x$ of $h'$, 
\[
x_\rho=x_\nu
\quad\Longrightarrow\quad
\textrm{$M_{x_U}$ is satisfied by $\zeta$}.
\] 
\end{lemma}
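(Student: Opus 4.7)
The plan is to build $\zeta$ directly from the values that $h'$ assigns to the auxiliary output vertex $\nu$ across the $\epsilon$-fixed points, and then to read off the satisfaction of $M_{x_U}$ using the specification $(*)$ of the succinct representation.

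First I would observe that for any $\epsilon$-fixed point $x$ of $h'$, the coordinates $x_{s_1}$ and $x_{s_2}$ are uniquely determined by $h'$ and $\epsilon$. Indeed $s_1, s_2$ are sources of $C'$, so $h'_{s_1}$ and $h'_{s_2}$ are constant functions with values $c_1, c_2$, and the $\epsilon$-fixed point condition $x \xor \epsilon = h'(x \xor \epsilon)$ forces $x_{s_i} = c_i \xor \epsilon_{s_i}$ for $i = 1, 2$. Next, since $N_{C'}(\nu) \subseteq W \cup \{s_1, s_2\}$, the local function $h'_\nu$ reads only its inputs from $W \cup \{s_1, s_2\}$; combined with the fact that $x_{s_1}, x_{s_2}$ are fixed across all $\epsilon$-fixed points, this implies that $x_\nu$ depends only on $x_W$. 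In particular, any two $\epsilon$-fixed points agreeing on $W$ also agree on $\nu$.

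This lets me define $\zeta \in \bool^{\GLAMBDA}$ unambiguously by setting $\zeta_{\glambda_w} = x_\nu$ whenever there is an $\epsilon$-fixed point $x$ with $x_W = w$, and arbitrarily otherwise. To verify the claimed implication, pick an $\epsilon$-fixed point $x$ with $x_\rho = x_\nu$. By $\epsilon$-consistency, $x_{V_C}$ is a fixed point of $h$ with $x_P \neq 00$, so $(*)$ yields $M_{x_U, x_P} = \glambda^+_{x_W}$ when $x_\rho = 1$ and $M_{x_U, x_P} = \glambda^-_{x_W}$ when $x_\rho = 0$. By construction $\zeta_{\glambda_{x_W}} = x_\nu = x_\rho$, so in both cases the literal $M_{x_U, x_P}$ is satisfied by $\zeta$, and hence so is the clause $M_{x_U}$.

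I do not foresee a serious obstacle here: the only subtle point is the well-definedness of $\zeta$, and it rests solely on the fact that $\nu$'s in-neighborhood in $C'$ lies entirely in $W \cup \{s_1, s_2\}$, so nothing outside $W$ can distinguish two $\epsilon$-fixed points at vertex $\nu$ once the forced values of $s_1, s_2$ are substituted.
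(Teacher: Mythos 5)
Your proof is correct and follows essentially the same route as the paper's: both exploit that $s_1,s_2$ are sources so that $x_\nu$ is a function of $x_W$ over the ($\epsilon$-)fixed points, define $\zeta_{\glambda_{x_W}}=x_\nu$, and conclude via the specification $(*)$. The only cosmetic difference is that you work directly with $\epsilon$-fixed points where the paper first states the argument for fixed points and then translates by $\epsilon$.
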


\begin{proof}
Since $s_1,s_2$ are sources, for every fixed points $x,y$ of $h'$ we have $x_{s_1}=y_{s_1}$ and $x_{s_2}=y_{s_2}$. Thus, if $x_W=y_W$ then $x_\nu=h'_\nu(x)=h'_\nu(y)=y_\nu$, because all the in-neighbors of $\nu$ are in $W\cup \{s_1,s_2\}$. In other words, there is a function $g$ from $\bool^W$ to $\bool$ such that $g(x_W)=x_\nu$ for every fixed point $x$ of $h'$. Let any $\zeta\in\bool^{\GLAMBDA}$ satisfying 
\[
\zeta_{\glambda_{x_W\xor\epsilon_W}}=g(x_W)\xor\epsilon_\nu=x_\nu\xor\epsilon_\nu
\]
for every fixed point $x$ of $h'$. This is equivalent to say that, for every $\epsilon$-fixed point $x$ of $h'$, 
\[
\zeta_{\glambda_{x_W}}=x_\nu.
\]
Now, since $h'$ is $\epsilon$-consistent, for every $\epsilon$-fixed point $x$ of $h'$ we have $x_P\neq 00$ and $M_{x_U,x_P}$ is $\glambda^+_{x_W}$ if $x_\rho=1$ and $\glambda^-_{x_W}$ otherwise. If $x_\rho=x_\nu=\zeta_{\glambda_{x_W}}$, then this literal $M_{x_U,x_P}$ is satisfied by $\zeta$, and so is the clause $M_{x_U}$. 
\end{proof}

Here is the definition of the formula $\psi$ encoding the consistency conditions and the condition  ``$x_\rho=x_\nu$''.

\begin{definition}[Formula $\psi$ associated with $h$]\label{def:psi_h}
  Let $\omega$ be the 3-CNF formula characterizing the fixed points of $h$, as
  in Lemma~\ref{lem:omega}. The \emph{formula $\psi$ associated with $h$} is
  the 3-CNF formula obtained from $\omega$ by adding the following three
  clauses: $\{p_1^+,p_2^+\}$, $\{\rho^+,\nu^-\}$ and
  $\{\rho^-,\nu^+\}$. So $\psi$ is a formula over the set of variables
  $\LAMBDA=V_C\cup\{\nu\}$.
\end{definition}

Note that, given a fixed point $x$ of $h'$, $\omega$ is satisfied by $x$ if and only if $x$ extends a fixed point of $h$ (since $\omega$ is satisfied by $x$ if and only if $\omega$ is satisfied by $x_{V_C}$ if and only if $x_{V_C}$ is a fixed point of $h$), and the three additional clauses are simultaneously satisfied by $x$ if and only if $x_P\neq 00$ and $x_\rho=x_\nu$. In particular, if $\psi$ is satisfied by every $\epsilon$-fixed point of $h'$, then $h'$ is $\epsilon$-consistent. 
 
\medskip
We now prove a formal statement of the property $(3)$ discussed above.  

\begin{lemma}\label{lem:pro3}
The following conditions are equivalent:
\begin{itemize}
\item[\em (a)]
$\Psi$ is satisfiable;
\item[\em (b)] there is $h'\in\functions(C')$ such that $\psi$ is satisfied by every fixed point of $h'$;
\item[\em (c)] there are $h'\in\functions(C')$ and $\epsilon\in\bool^{V_{C'}}$ such that $\psi$ is satisfied by every $\epsilon$-fixed point of $h'$.
\end{itemize}
\end{lemma}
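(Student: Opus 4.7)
The plan is to prove the cyclic implications (a)$\Rightarrow$(b)$\Rightarrow$(c)$\Rightarrow$(a), exploiting the correspondence already set up between the extra clauses of $\psi$ and the consistency/rail conditions on circuits in $\functions(C')$. The key observation, already noted after Definition~\ref{def:psi_h}, is that a configuration $x$ on $V_{C'}$ satisfies $\psi$ if and only if: $x_{V_C}$ is a fixed point of $h$ (from $\omega$), $x_P\neq 00$ (from $\{p_1^+,p_2^+\}$), and $x_\rho=x_\nu$ (from $\{\rho^+,\nu^-\},\{\rho^-,\nu^+\}$). The first two conditions together say that the host circuit is consistent at $x$.

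For (a)$\Rightarrow$(b), I would start from a satisfying assignment $\zeta$ of $\Psi$ and apply Lemma~\ref{lem:pro1} to obtain a consistent $h'\in\functions(C')$ such that $M_{x_U}$ is satisfied by $\zeta$ iff $x_\rho=x_\nu$ on every fixed point $x$. Consistency yields that $x_{V_C}$ is a fixed point of $h$ and $x_P\neq 00$, so $\omega$ and $\{p_1^+,p_2^+\}$ are satisfied; since $\zeta$ satisfies $\Psi$, the clause $M_{x_U}$ is satisfied, hence $x_\rho=x_\nu$, which gives the two remaining clauses. So every fixed point of $h'$ satisfies $\psi$. The implication (b)$\Rightarrow$(c) is immediate by choosing $\epsilon=\mathbf{0}$, where $\epsilon$-fixed points coincide with fixed points.

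The interesting direction is (c)$\Rightarrow$(a). Given $h'$ and $\epsilon$ as in (c), every $\epsilon$-fixed point $x$ of $h'$ satisfies $\psi$, hence satisfies $\omega$, has $x_P\neq 00$, and has $x_\rho=x_\nu$. The first two facts say precisely that $h'$ is $\epsilon$-consistent, so Lemma~\ref{lem:pro2} provides an assignment $\zeta\in\bool^{\GLAMBDA}$ such that $M_{x_U}$ is satisfied by $\zeta$ whenever $x_\rho=x_\nu$. But the equality $x_\rho=x_\nu$ holds on \emph{every} $\epsilon$-fixed point, so $M_{x_U}$ is satisfied by $\zeta$ on every $\epsilon$-fixed point.

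The final step, which I expect to be the one point requiring a line of care, is to check that as $x$ ranges over the $\epsilon$-fixed points of $h'$, the restriction $x_U$ ranges over all of $\bool^U$, so that every clause $M_u$ of $\Psi$ is accounted for. This follows because $U$ is the set of input vertices of $C'$, so fixed points of $h'$ are in bijection with $\bool^U$ via $y\mapsto y_U$; the map $y\mapsto y\xor\epsilon$ is a bijection from fixed points to $\epsilon$-fixed points, and $(y\xor\epsilon)_U=y_U\xor\epsilon_U$ still ranges over $\bool^U$. Thus every $u\in\bool^U$ is realised as $x_U$ for some $\epsilon$-fixed point $x$, so $\zeta$ satisfies every clause of $\Psi$ and (a) holds.
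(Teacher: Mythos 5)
Your proposal is correct and follows essentially the same route as the paper's proof: (a)$\Rightarrow$(b) via Lemma~\ref{lem:pro1} and the reading of $\psi$'s clauses, (b)$\Rightarrow$(c) with $\epsilon=\mathbf{0}$, and (c)$\Rightarrow$(a) via $\epsilon$-consistency, Lemma~\ref{lem:pro2}, and the observation that $x_U$ ranges over all of $\bool^U$ as $x$ ranges over the $\epsilon$-fixed points. The only difference is presentational: you spell out the clause-by-clause reading of $\psi$ and the bijection between fixed points and $\epsilon$-fixed points slightly more explicitly than the paper does.
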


\begin{proof}
Suppose that $\Psi$ is satisfied by $\zeta\in\bool^{\GLAMBDA}$. By Lemma~\ref{lem:pro1}, there is a BN $h'\in \functions(C')$ consistent with $h$ such that $x_\rho=x_\nu$ for every fixed point $x$ of $h$. By consistency, $x_P\neq 00$ and $x$ extends a fixed point of $h$, thus $\omega$ is satisfied. We deduce that $\psi$ is satisfied by every fixed point of $h'$. This proves that (a) implies (b). Since (b) trivially implies (c), it remains to prove that (c) implies (a). 

\medskip
Suppose $h'$ and $\epsilon$ are as in (c). Then $h'$ is $\epsilon$-consistent, and $x_\rho=x_\nu$ for every $\epsilon$-fixed point $x$ of $h'$. Hence, by Lemma~\ref{lem:pro2}, there is an assignment $\zeta\in\bool^{\GLAMBDA}$ such that, for every $\epsilon$-fixed point $x$ of $h'$, $M_{x_U}$ is satisfied by $\zeta$. Since $U$ is the set of input vertices of~$C'$, any input configuration, that is any member of in $\bool^{U}$, is extended by a fixed point of $h'$. Hence, for any input configuration $u$, the input configuration  $u\xor\epsilon_U$ is extended by a fixed point $x$ of $h'$, so $u$ is extended by $x\xor\epsilon$, which is an $\epsilon$-fixed point of $h'$. Hence, all the clauses of $\Psi$ are satisfied by $\zeta$. This proves that (c) implies (a). 
\end{proof}

The formal definition of our construction $D_\Psi$ follows, see Figure \ref{fig:D_Psi} for an illustration. 

\begin{definition}[$D_\Psi$]\label{def:D_Psi}
  Let $C'$ be the circuit structure of Definition~\ref{def:C'}. Let $\psi$ be the
  3-CNF formula associated with $h$ given in Definition~\ref{def:psi_h}, which
  is a formula over the set of variables $\LAMBDA=V_C\cup\{\nu\}$. Let $D_\psi$
  be the SID defined from $\psi$ as in Definition~\ref{def:D_psi}. We define
  $D_\Psi$ as the extension of $D_\psi$ obtained by taking the union of $C'$
  and $D_\psi$ (supposing naturally that vertices $s_1,s_2$ are not in
  $D_\psi\setminus\LAMBDA$). We denote by $V_\Psi$ the vertex set of~$D_\Psi$. 
\end{definition}

Since $\psi$ has at most $3|V_C|+3$ clauses and $|\LAMBDA|=|V_C|+1$, we have 
\[
|V_\Psi|=|V_\psi|+2\leq \big(4|\LAMBDA|+2(3|V_C|+3)+1\big)+2=10|V_C|+13,
\]
which is linear according to the size of the succinct representation of $\Psi$. 

\begin{figure}
\vspace{-1.5cm}
\[
\hspace{-1.5cm}
\begin{tikzpicture}[scale=0.7]
\node[outer sep=1,inner sep=1,circle,draw,thick] (t1) at (7,14){\footnotesize$s_1$};
\node[outer sep=1,inner sep=1,circle,draw,thick] (t2) at (9,14){\footnotesize$s_2$};
\node[outer sep=1,inner sep=1,circle,draw,thick] (u1) at (1,8){\footnotesize$u_1$};
\node[outer sep=1,inner sep=1,circle,draw,thick] (1) at (3,8){\footnotesize$p_1$};
\node[outer sep=1,inner sep=1,circle,draw,thick] (2) at (5,8){\footnotesize$p_2$};
\node[outer sep=1,inner sep=1,circle,draw,thick] (v1) at (7,8){\footnotesize$v_1$};
\node[outer sep=1,inner sep=1,circle,draw,thick] (v2) at (9,8){\footnotesize$v_2$};
\node[outer sep=1,inner sep=1,circle,draw,thick] (w1) at (11,8){\footnotesize$w_1$};
\node[outer sep=1,inner sep=1,circle,draw,thick] (w2) at (13,8){\footnotesize$w_2$};
\node[outer sep=1,inner sep=2.5,circle,draw,thick] (rho) at (15,8){\footnotesize$\rho$};
\node[outer sep=1,inner sep=2.5,circle,draw,thick] (nu) at (17,8){\footnotesize$\nu$};
\node[outer sep=1,inner sep=0.5,circle,draw,thick] (u1+) at (1,3){\footnotesize$u^+_1$};
\node[outer sep=1,inner sep=0.5,circle,draw,thick] (1+) at (3,3){\footnotesize$p_1^+$};
\node[outer sep=1,inner sep=0.5,circle,draw,thick] (2+) at (5,3){\footnotesize$p_2^+$};
\node[outer sep=1,inner sep=0.5,circle,draw,thick] (v1+) at (7,3){\footnotesize$v^+_1$};
\node[outer sep=1,inner sep=0.5,circle,draw,thick] (v2+) at (9,3){\footnotesize$v^+_2$};
\node[outer sep=1,inner sep=0.5,circle,draw,thick] (w1+) at (11,3){\footnotesize$w^+_1$};
\node[outer sep=1,inner sep=0.5,circle,draw,thick] (w2+) at (13,3){\footnotesize$w^+_2$};
\node[outer sep=1,inner sep=0.5,circle,draw,thick] (rho+) at (15,3){\footnotesize$\rho^+$};
\node[outer sep=1,inner sep=0.5,circle,draw,thick] (nu+) at (17,3){\footnotesize$\nu^+$};
\node[outer sep=1,inner sep=0.5,circle,draw,thick] (u1-) at (1,1){\footnotesize$u^-_1$};
\node[outer sep=1,inner sep=0.5,circle,draw,thick] (1-) at (3,1){\footnotesize$p_1^-$};
\node[outer sep=1,inner sep=0.5,circle,draw,thick] (2-) at (5,1){\footnotesize$p_2^-$};
\node[outer sep=1,inner sep=0.5,circle,draw,thick] (v1-) at (7,1){\footnotesize$v^-_1$};
\node[outer sep=1,inner sep=0.5,circle,draw,thick] (v2-) at (9,1){\footnotesize$v^-_2$};
\node[outer sep=1,inner sep=0.5,circle,draw,thick] (w1-) at (11,1){\footnotesize$w^-_1$};
\node[outer sep=1,inner sep=0.5,circle,draw,thick] (w2-) at (13,1){\footnotesize$w^-_2$};
\node[outer sep=1,inner sep=0.5,circle,draw,thick] (rho-) at (15,1){\footnotesize$\rho^-$};
\node[outer sep=1,inner sep=0.5,circle,draw,thick] (nu-) at (17,1){\footnotesize$\nu^-$};
\node[outer sep=1,inner sep=2,circle,draw,thick] (muv11) at (0,-4){\footnotesize$$};
\node[outer sep=1,inner sep=2,circle,draw,thick] (muv12) at (1.5,-4){\footnotesize$$};
\node[outer sep=1,inner sep=2,circle,draw,thick] (muv21) at (3,-4){\footnotesize$$};
\node[outer sep=1,inner sep=2,circle,draw,thick] (muv22) at (4.5,-4){\footnotesize$$};
\node[outer sep=1,inner sep=2,circle,draw,thick] (muw11) at (6,-4){\footnotesize$$};
\node[outer sep=1,inner sep=2,circle,draw,thick] (muw12) at (7.5,-4){\footnotesize$$};
\node[outer sep=1,inner sep=2,circle,draw,thick] (muw21) at (9,-4){\footnotesize$$};
\node[outer sep=1,inner sep=2,circle,draw,thick] (muw22) at (10.5,-4){\footnotesize$$};
\node[outer sep=1,inner sep=2,circle,draw,thick] (muw23) at (12,-4){\footnotesize$$};
\node[outer sep=1,inner sep=2,circle,draw,thick] (murho1) at (13.5,-4){\footnotesize$$};
\node[outer sep=1,inner sep=2,circle,draw,thick] (murho2) at (15,-4){\footnotesize$$};
\node[outer sep=1,inner sep=2,circle,draw,thick] (murho3) at (16.5,-4){\footnotesize$$};
\node[outer sep=1,inner sep=2,circle,draw,thick] (mu12) at (-1.5,-4){\footnotesize$$};
\node[outer sep=1,inner sep=2,circle,draw,thick] (murhonu1) at (18,-4){\footnotesize$$};
\node[outer sep=1,inner sep=2,circle,draw,thick] (murhonu2) at (19.5,-4){\footnotesize$$};
\node[outer sep=1,inner sep=2,circle,draw,thick] (Cmuv11) at (0,-5){\footnotesize$$};
\node[outer sep=1,inner sep=2,circle,draw,thick] (Cmuv12) at (1.5,-5){\footnotesize$$};
\node[outer sep=1,inner sep=2,circle,draw,thick] (Cmuv21) at (3,-5){\footnotesize$$};
\node[outer sep=1,inner sep=2,circle,draw,thick] (Cmuv22) at (4.5,-5){\footnotesize$$};
\node[outer sep=1,inner sep=2,circle,draw,thick] (Cmuw11) at (6,-5){\footnotesize$$};
\node[outer sep=1,inner sep=2,circle,draw,thick] (Cmuw12) at (7.5,-5){\footnotesize$$};
\node[outer sep=1,inner sep=2,circle,draw,thick] (Cmuw21) at (9,-5){\footnotesize$$};
\node[outer sep=1,inner sep=2,circle,draw,thick] (Cmuw22) at (10.5,-5){\footnotesize$$};
\node[outer sep=1,inner sep=2,circle,draw,thick] (Cmuw23) at (12,-5){\footnotesize$$};
\node[outer sep=1,inner sep=2,circle,draw,thick] (Cmurho1) at (13.5,-5){\footnotesize$$};
\node[outer sep=1,inner sep=2,circle,draw,thick] (Cmurho2) at (15,-5){\footnotesize$$};
\node[outer sep=1,inner sep=2,circle,draw,thick] (Cmurho3) at (16.5,-5){\footnotesize$$};
\node[outer sep=1,inner sep=2,circle,draw,thick] (Cmu12) at (-1.5,-5){\footnotesize$$};
\node[outer sep=1,inner sep=2,circle,draw,thick] (Cmurhonu1) at (18,-5){\footnotesize$$};
\node[outer sep=1,inner sep=2,circle,draw,thick] (Cmurhonu2) at (19.5,-5){\footnotesize$$};
\node[outer sep=1,inner sep=1,circle,draw,thick] (l0) at (0,2){\footnotesize$$};
\node[outer sep=1,inner sep=1,circle,draw,thick] (l1) at (2,2){\footnotesize$$};
\node[outer sep=1,inner sep=1,circle,draw,thick] (l2) at (4,2){\footnotesize$$};
\node[outer sep=1,inner sep=1,circle,draw,thick] (l3) at (6,2){\footnotesize$$};
\node[outer sep=1,inner sep=1,circle,draw,thick] (l4) at (8,2){\footnotesize$$};
\node[outer sep=1,inner sep=1,circle,draw,thick] (l5) at (10,2){\footnotesize$$};
\node[outer sep=1,inner sep=1,circle,draw,thick] (l6) at (12,2){\footnotesize$$};
\node[outer sep=1,inner sep=1,circle,draw,thick] (l7) at (14,2){\footnotesize$$};
\node[outer sep=1,inner sep=1,circle,draw,thick] (l8) at (16,2){\footnotesize$$};
\node[outer sep=1,inner sep=1,circle,draw,thick] (l9) at (18,2){\footnotesize$$};
\path[Green,->,thick]
(1+) edge (mu12)
(2+) edge[bend left=10] (mu12)
(v1+) edge[bend left=15] (muv11)
(u1+) edge[bend right=10] (muv11)
(v1-) edge (muv12)
(u1-) edge (muv12)
(v2+) edge[bend left=10] (muv21)
(1+) edge[bend right=18] (muv21)
(v2-) edge (muv22)
(1-) edge (muv22)
(w1+) edge[bend left=5] (muw11)
(1-) edge (muw11)
(w1-) edge (muw12)
(1+) edge (muw12)
(w2+) edge (muw21)
(u1-) edge (muw21)
(w2+) edge (muw22)
(2-) edge (muw22)
(w2-) edge (muw23)
(u1+) edge[bend right=2] (muw23)
(2+) edge[bend right=15] (muw23)
(rho-) edge (murho1)
(v1+) edge[bend right=10] (murho1)
(rho-) edge (murho2)
(v2+) edge[bend right=10] (murho2)
(rho+) edge[bend left=5] (murho3)
(v1-) edge (murho3)
(v2-) edge (murho3)
(rho+) edge (murhonu1)
(nu-) edge (murhonu1)
(rho-) edge (murhonu2)
(nu+) edge (murhonu2)
;
\path[Green,->,very thick]
(l0) edge (u1+)
(l0) edge (u1-)
(u1+) edge (l1)
(u1-) edge (l1)
(l1) edge (1+)
(l1) edge (1-)
(1+) edge (l2)
(1-) edge (l2)
(l2) edge (2+)
(l2) edge (2-)
(2+) edge (l3)
(2-) edge (l3)
(l3) edge (v1+)
(l3) edge (v1-)
(v1+) edge (l4)
(v1-) edge (l4)
(l4) edge (v2+)
(l4) edge (v2-)
(v2+) edge (l5)
(v2-) edge (l5)
(l5) edge (w1+)
(l5) edge (w1-)
(w1+) edge (l6)
(w1-) edge (l6)
(l6) edge (w2+)
(l6) edge (w2-)
(w2+) edge (l7)
(w2-) edge (l7)
(l7) edge (rho+)
(l7) edge (rho-)
(rho+) edge (l8)
(rho-) edge (l8)
(l8) edge (nu+)
(l8) edge (nu-)
(nu+) edge (l9)
(nu-) edge (l9)
(l9) edge[bend left=70] (Cmurhonu2)
(Cmu12) edge[bend left=70] (l0)
(Cmurhonu2) edge (Cmurhonu1)
(Cmurhonu1) edge (Cmurho3)
(Cmurho3) edge (Cmurho2)
(Cmurho2) edge (Cmurho1)
(Cmurho1) edge (Cmuw23)
(Cmuw23) edge (Cmuw22)
(Cmuw22) edge (Cmuw21)
(Cmuw21) edge (Cmuw12)
(Cmuw12) edge (Cmuw11)
(Cmuw11) edge (Cmuv22)
(Cmuv22) edge (Cmuv21)
(Cmuv21) edge (Cmuv12)
(Cmuv12) edge (Cmuv11)
(Cmuv11) edge (Cmu12)
(murhonu2) edge[red,-|] (Cmurhonu2)
(murhonu1) edge[red,-|] (Cmurhonu1)
(murho3) edge[red,-|] (Cmurho3)
(murho2) edge[red,-|] (Cmurho2)
(murho1) edge[red,-|] (Cmurho1)
(muw23) edge[red,-|] (Cmuw23)
(muw22) edge[red,-|] (Cmuw22)
(muw21) edge[red,-|] (Cmuw21)
(muw12) edge[red,-|] (Cmuw12)
(muw11) edge[red,-|] (Cmuw11)
(muv22) edge[red,-|] (Cmuv22)
(muv21) edge[red,-|] (Cmuv21)
(muv12) edge[red,-|] (Cmuv12)
(muv11) edge[red,-|] (Cmuv11)
(mu12) edge[red,-|] (Cmu12)
;
\path[Green,->,thick]
(u1) edge (u1+)
(1) edge (1+)
(2) edge (2+)
(v1) edge (v1+)
(v2) edge (v2+)
(w1) edge (w1+)
(w2) edge (w2+)
(rho) edge (rho+)
(nu) edge (nu+)
;
\path[red,-|,thick,bend left=18]
(u1) edge (u1-)
(1) edge (1-)
(2) edge (2-)
(v1) edge (v1-)
(v2) edge (v2-)
(w1) edge (w1-)
(w2) edge (w2-)
(rho) edge (rho-)
(nu) edge (nu-)
;
\path[->,thick]
(u1) edge[bend left=30] (1)
(u1) edge[bend left=30] (2)
(w1) edge[bend left=40] (nu)
(w2) edge[bend left=40] (nu)
(t1) edge[bend right=10] (1)
(t2) edge[bend right=10] (1)
(t1) edge[bend right=10] (2)
(t2) edge[bend right=10] (2)
(t1) edge[bend left=10] (nu)
(t2) edge[bend left=10] (nu)
;
\draw[Green,->,ultra thick] (u1.120) .. controls (0.5,9) and (1.5,9) .. (u1.60);
\path[Green,->,ultra thick]
(u1) edge[red,-|,bend right=30] (v1)
(1) edge[red,-|,bend right=30] (v2)
(1) edge[bend left=40] (w1)
(u1) edge[bend right=40] (w2)
(2) edge[bend right=40] (w2)
(v1) edge[bend left=40] (rho)
(v2) edge[bend left=40] (rho)
;
\draw[decoration={brace,mirror,raise=10},decorate,thick] (-2,-5) --  (-1,-5);
\node at (-1.5,-6){\footnotesize $x_P\neq 00$}; 
\draw[decoration={brace,mirror,raise=10},decorate,thick] (-0.5,-5) -- (2,-5);
\node at (0.75,-6){\footnotesize $x_{v_1}=\neg x_{u_1}$}; 
\draw[decoration={brace,mirror,raise=10},decorate,thick] (2.5,-5) -- (5,-5);
\node at (3.75,-6){\footnotesize $x_{v_2}=\neg x_{p_1}$}; 
\draw[decoration={brace,mirror,raise=10},decorate,thick] (5.5,-5) -- (8,-5);
\node at (6.75,-6){\footnotesize $x_{w_1}=x_{p_1}$}; 
\draw[decoration={brace,mirror,raise=10},decorate,thick] (8.5,-5) -- (12.5,-5);
\node at (10.5,-6){\footnotesize $x_{w_2}=x_{u_1}\lor x_{p_2}$}; 
\draw[decoration={brace,mirror,raise=10},decorate,thick] (13,-5) -- (17,-5);
\node at (15,-6){\footnotesize $x_{\rho}=x_{v_1}\land x_{v_2}$}; 
\draw[decoration={brace,mirror,raise=10},decorate,thick] (17.5,-5) -- (20,-5);
\node at (18.75,-6){\footnotesize $x_{\rho}=x_\nu$}; 
\draw[decoration={brace,mirror,raise=10},decorate,thick] (-0.5,-6) -- (17,-6);
\node at (8.25,-7){$\omega$}; 
\draw[decoration={brace,mirror,raise=10},decorate,thick] (-2,-7) -- (20,-7);
\node at (9,-8){$\psi$}; 
\draw[rounded corners=5] (-4,4) rectangle (22,-9);
\node at (9,-9.5){\large $D_\psi\setminus \LAMBDA$}; 
\draw[rounded corners=5] (0,5) rectangle (18,15);
\node at (9,15.5){\large $C'$}; 
\end{tikzpicture}
\]
{\caption{\label{fig:D_Psi}
The SID $D_\Psi$ constructed from the circuit $h\in\functions(C)$ described in Figure~\ref{fig:h}. Black arrows represent \Null{} arcs. Braces correspond to sets of clauses contained in $\psi$, and below each brace the necessary and sufficient conditions for the corresponding clauses to be simultaneously satisfied by an assignment $x$ are given.}}
\end{figure}
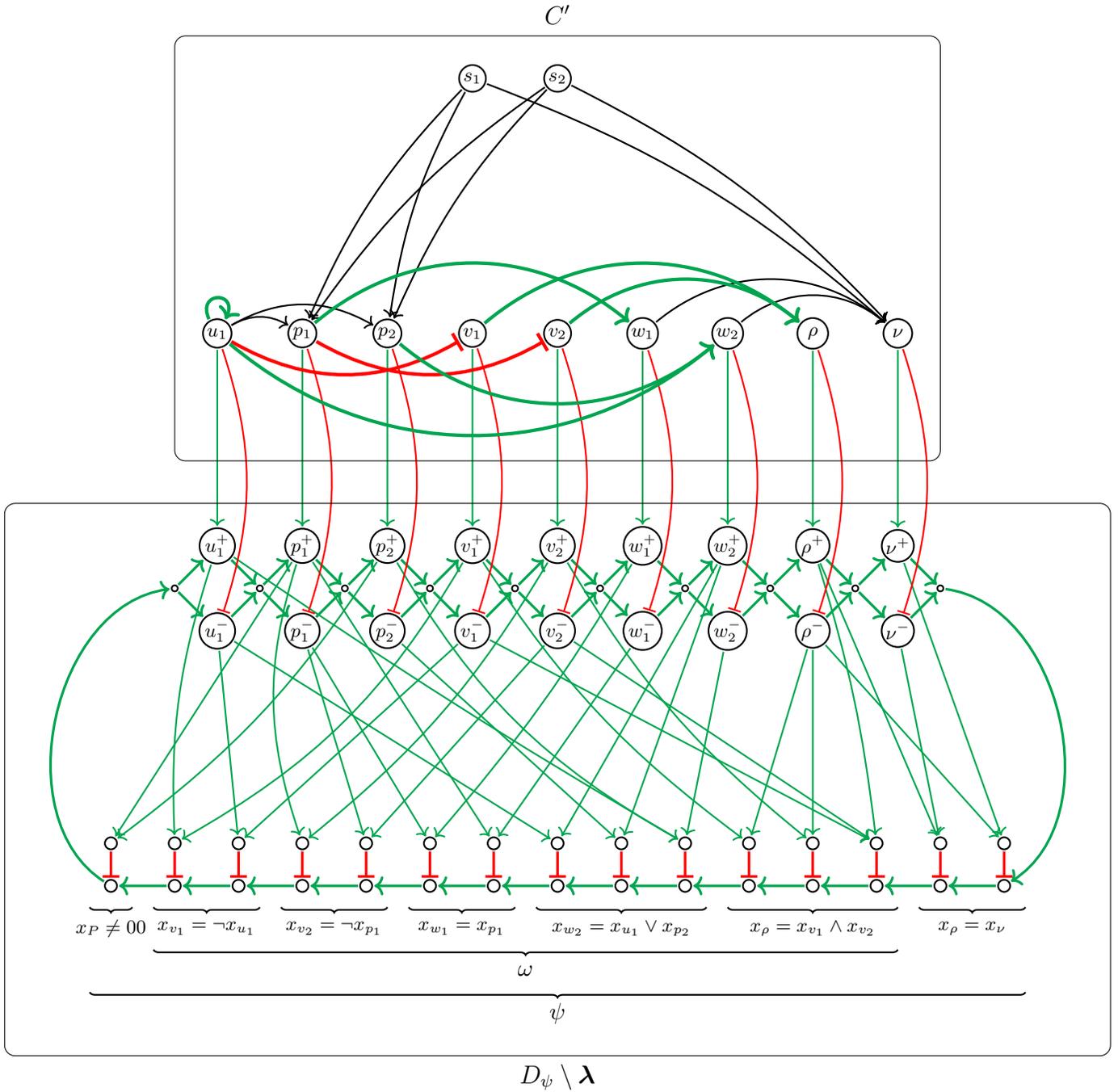

\medskip
Since $C'$ has $m$ positive loops and no other cycle, and since $D_{\Psi}\setminus  V_{C'}$ has a positive feedback vertex set of size one, we deduce that $D_{\Psi}$ has a positive feedback vertex set of size $m+1$. Thus, $\maxpf(D_{\Psi})\leq 2^{m+1}$ by the positive feedback bound. Actually, since $D_\Psi$ has $m+1$ vertex-disjoint positive cycles, we have $\tau^+(D)=m+1$ (since $C'$ has $m$ positive loops and $D_\psi\setminus \LAMBDA$ has a positive cycle). Putting things together, we prove that the positive feedback bound is reached if and only if $\Psi$ is satisfiable, and this proves Theorem~\ref{theorem:mfpp_unbounded}.

\begin{lemma}\label{lem:D_Psi}
$\maxpf(D_{\Psi})=2^{m+1}$ if and only if $\Psi$ is satisfiable. 
\end{lemma}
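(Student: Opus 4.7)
The plan is to bracket the equality $\maxpf(D_{\Psi})=2^{m+1}$ by the positive feedback bound (which already provides $\maxpf(D_{\Psi})\le 2^{m+1}$ via the feedback vertex set consisting of one vertex per positive loop of $C'$ on $U$ together with one positive feedback vertex from $D_\psi\setminus\LAMBDA$), and by a two-way translation between BNs in $\functions(D_{\Psi})$ and circuits in $\functions(C')$. The translation rests on one observation: setting $I:=V_{D_{\Psi}}\setminus U_\psi=V_{C'}$, the extension hypothesis on $D_{\Psi}$ makes the in-neighbors of every $i\in I$ identical in $D_{\Psi}$ and $C'$. Consequently, for any $f\in\functions(D_{\Psi})$, the restriction $h':=f|_I$ belongs to $\functions(C')$, and the partial fixed points of $f$ (Definition~\ref{def:extension}) are exactly the fixed points of $h'$. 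Since $U$ is the set of input vertices of $C'$ and $C'$ is otherwise acyclic, each such $h'$ has exactly $2^m$ fixed points.

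For the direction ``$\Psi$ satisfiable $\Rightarrow \maxpf(D_{\Psi})=2^{m+1}$'', I would invoke Lemma~\ref{lem:pro3}(a)$\Rightarrow$(b) to obtain $h'\in\functions(C')$ such that $\psi$ is satisfied by every fixed point of $h'$, and then build $f\in\functions(D_{\Psi})$ by copying $h'$ on $V_{C'}$ and setting $f_i$ to the AND function on $\ELL$ and to the OR function elsewhere in $U_\psi$. Each of the $2^m$ partial fixed points of $f$, being a fixed point of $h'$, satisfies $\psi$, so Lemma~\ref{lem:ext1} produces two fixed points of $f$ above it. Hence $\phi(f)\ge 2\cdot 2^m=2^{m+1}$, and the upper bound forces equality.

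For the converse direction, given $f\in\functions(D_{\Psi})$ with $\phi(f)=2^{m+1}$, I would set $h':=f|_I\in\functions(C')$ and consider $\epsilon:=\epsilon(f)\in\bool^{\LAMBDA}$, extended by $0$ on $V_{C'}\setminus\LAMBDA$ (legitimate since $\psi$ only mentions variables in $\LAMBDA$). Lemma~\ref{lem:ext2} caps each of the $2^m$ partial fixed points at two fixed-point extensions, and the assumption $\phi(f)=2^{m+1}$ saturates this cap, so every partial fixed point $z$ extends to exactly two fixed points of $f$ and therefore $\psi$ is satisfied by $z_\LAMBDA\xor\epsilon(f)$. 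This precisely states that $\psi$ is satisfied by every $\epsilon$-fixed point of $h'$, and Lemma~\ref{lem:pro3}(c)$\Rightarrow$(a) then yields a satisfying assignment of $\Psi$. The only point demanding care is this bookkeeping: the identification of partial fixed points of $f$ with fixed points of the restricted circuit $h'$, and the translation from $\epsilon(f)\in\bool^{\LAMBDA}$ to an $\epsilon$-fixed point on $V_{C'}$; once this is done, the proof is essentially the concatenation of the three auxiliary lemmas.
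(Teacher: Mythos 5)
Your proposal is correct and follows essentially the same route as the paper's proof: the positive feedback bound for the upper bound, Lemma~\ref{lem:pro3} plus Lemma~\ref{lem:ext1} for sufficiency, and the saturation of the two-extensions cap from Lemma~\ref{lem:ext2} followed by Lemma~\ref{lem:pro3}(c)$\Rightarrow$(a) for necessity. The bookkeeping you flag (identifying partial fixed points of $f$ with fixed points of the restricted circuit $h'$, and extending $\epsilon(f)$ arbitrarily to $V_{C'}$) is exactly how the paper handles it.
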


\begin{proof}
Suppose that $\Psi$ is satisfied by an assignment $\zeta\in\bool^{\GLAMBDA}$. By Lemma~\ref{lem:pro3} there is $h'\in\functions(C')$ such that $\psi$ is satisfied by all the $2^m$ fixed points of $h'$. Let $f\in\functions(D_\Psi)$ be defined as follows. First, $f(x)_{V_{C'}}=h'(x_{V_{C'}})$ for every configuration $x$ on $V_\Psi$. Second, for every $i\in V_\Psi\setminus V_{C'}$, we define $f_i$ has the AND function if $i\in\ELL$ and the OR function otherwise (vertices in $\ELL$ are those with two in-neighbors corresponding to the positive and negative literals associated with a variable in $\LAMBDA$). By the first part of the definition, the set of fixed points of $h'$ is the set of partial fixed points of $f$. So $\psi$ is satisfied by the $2^m$ partial fixed points of $f$, and we deduce from Lemma~\ref{lem:ext1} that $f$ has at least $2^{m+1}$ fixed points. Thus, $\maxpf(D_{\Psi})\geq 2^{m+1}$ and $\maxpf(D_{\Psi})=2^{m+1}$ by the positive feedback bound. 

\medskip
Conversely, suppose that $\maxpf(D_{\Psi})=2^{m+1}$. Let
$f\in\functions(D_\Psi)$ with $2^{m+1}$ fixed points. Let $h'$ be the BN with
component set $V_{C'}$ defined by $h'(x_{V_{C'}})=f(x)_{V_{C'}}$ for all
configurations $x$ on $V_\Psi$; there is no ambiguity since vertices in $C'$
have only in-neighbors in $C'$ and, thanks to this property, the SID of $h'$
is $D_\Psi\setminus U_\psi=C'$. Furthermore, the set of fixed points of $h'$
is the set of partial fixed points of $f$. Hence, $f$ has $2^m$ partial fixed
points. Since $f$ has at most two fixed points extending the same partial
fixed point, and since every fixed point of $f$ extends a partial fixed
point, we deduce that, for every partial fixed point $x$, $f$ has exactly two
fixed points extending $x$. Hence, by Lemma~\ref{lem:ext2}, $\psi$ is
satisfied by $x_{\LAMBDA}\xor\epsilon(f)$ for every fixed point $x$ of $h'$.
This is equivalent to say that $\psi$ is satisfied by every $\epsilon$-fixed
point of $h'$, where $\epsilon$ is any configuration on $V_{C'}$ extending
$\epsilon(f)$, and we deduce from Lemma~\ref{lem:pro3} that $\Psi$ is
satisfiable.
\end{proof}

\begin{remark}
By this lemma, the problem of deciding if $\maxpf(D)=2^{\tau^+(D)}$ is
$\NEXPTIME$-complete.  This is interesting since many works have been devoted
to study the conditions for similar bounds to be reached, see
\cite{GR11,GRF16,ARS17} and the references therein. In particular,
\cite{ARS17} gives a graph-theoretical characterization of the SIDs $D$ with
only positive arcs such that $\maxpf(D)=2^{\tau^+(D)}$, and we may ask if the
problem remains as hard under this restriction. Furthermore, the much studied
{\em network coding problem} in information theory can be restated as a
problem concerning BNs that consists in deciding if a bound similar to the
positive feedback bound is reached \cite{GR11}.  
\end{remark}

\begin{remark}
With slightly more precise arguments, we can prove that $\maxpf(D_\Psi)=2^{m+1}+\alpha$, where $\alpha$ is the maximum number of clauses contained in $\Psi$ that can be simultaneously satisfied. 
\end{remark}

\section{\MinFPP{}}\label{section:min}

In this section, we study decision problems related to the minimum number of fixed points, and we obtain the following tight complexity results. A new complexity class is involved: $\NPoNP$ (often denoted by $\sum^P_2$), which contains decision problems computable in polynomial time on a non-deterministic Turing machine with an oracle in $\NP$.

\begin{theorem} \label{theorem:minfpp}
Let $k\geq 1$ and $d\geq 2$ be fixed integers.
\begin{itemize}
\item
\kminfpp{k} and \minfpp{} are $\NEXPTIME$-complete.
\item
When $\Delta(D) \leq d$, \kminfpp{k} is $\NPoNP$-complete.
\item
When $\Delta(D) \leq d$, {\minfpp} is $\NPoSPoly$-complete.
\end{itemize}
\end{theorem}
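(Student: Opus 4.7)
\emph{Upper bounds.} The $\NEXPTIME$ inclusion was already observed in Section~\ref{section:definition}. When $\Delta(D)\leq d$, a BN $f\in\functions(D)$ is describable by polynomially many bits, so $f$ can be guessed and $D_f=D$ verified in polynomial time. For fixed $k$, checking $\phi(f)<k$ is in $\coNP$ (its complement guesses $k$ distinct fixed points, hence is in $\NP$), placing the problem in $\NP^{\coNP}=\NPoNP$. For $k$ given in input, $\phi(f)$ is computed with a single \SPoly{} oracle call, placing the problem in $\NPoSPoly$.

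\emph{Dualization of the key lemmas.} The lower bounds proceed by a common dualization of the constructions of Sections~\ref{section:max_k>=2} and~\ref{section:max_k>=k}, exploiting the duality between positive and negative cycles in Theorem~\ref{thm:basic_results}. Let $\tilde D_\psi$ be $D_\psi$ (Definition~\ref{def:D_psi}) with the sign of the arc $(c_1,\ell_0)$ flipped to negative. Since $\tilde D_\psi\setminus\{\ell_0\}$ remains acyclic, $\{\ell_0\}$ is still a positive feedback vertex set, so $\maxpf(\tilde D_\psi)\leq 2$. A direct AND/OR analysis—tracing through the forced relations $x_{\ell_0}=\neg x_{c_1}$, $x_{\ell_r}=x_{\ell_{r-1}}$ and $x_{c_s}=x_{c_{s+1}}\lor\neg x_{\mu_s}$—shows that the BN $f$ with $f_\LAMBDA=z$ and AND (resp.\ OR) local functions on $\ELL$ (resp.\ $U_\psi\setminus\ELL$) has $\phi(f)=0$ if $\psi$ is satisfied by $z$ and $\phi(f)=1$ otherwise. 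A symmetric dualization of Lemma~\ref{lem:psi_2}, based on Lemma~\ref{lem:pseudo_monotone}, then shows that any $f\in\functions(\tilde D_\psi)$ with $\phi(f)=0$ forces $\psi$ to be satisfied by $f_\LAMBDA\oplus\epsilon(f)$. These yield dual versions of Lemmas~\ref{lem:ext1} and~\ref{lem:ext2} where ``two fixed points extending $z$'' is replaced by ``no fixed point extending $z$''.

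\emph{Cascading reductions.} Each lower bound now follows from a direct adaptation of its max-case counterpart. For the $\NEXPTIME$-hardness of \kminfpp{k}, already at $k=1$, one plugs $\tilde D_\psi$ into Definition~\ref{def:D_Psi} to define $\tilde D_\Psi$ and applies the consistency machinery of Lemmas~\ref{lem:pro1}--\ref{lem:pro3} verbatim to obtain $\minpf(\tilde D_\Psi)=0\iff\Psi$ satisfiable. For the $\NPoSPoly$-hardness of \minfpp{} under $\Delta\leq d$, a reduction from \emsat{} uses the extension $\tilde D_{\psi,s}$ of $\tilde D_\psi$ with positive loops on $\LAMBDA\setminus\LAMBDA'$, yielding $\minpf(\tilde D_{\psi,s})=2^{n-s}-\alpha^*$ (so $\minpf<2^{n-s-1}+1$ iff $\alpha^*\geq 2^{n-s-1}$); Lemma~\ref{lem:delta_reduction} then trims the in-degree. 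For the $\NPoNP$-hardness of \kminfpp{k} under $\Delta\leq d$, a reduction from \qsat{} proceeds analogously: the outer $\exists z'$ quantifier is realized by the choice of values at the sources $\LAMBDA'$, and the inner $\forall z\supseteq z'$ quantifier forces every partial fixed point to lack a fixed-point extension, which via the dualized Lemma~\ref{lem:ext1} translates to $\psi$ being satisfied by every extension of $z'$; for fixed $k>1$, adding $\lceil\log_2 k\rceil$ positive loops amplifies $\minpf$ by $2^{\lceil\log_2 k\rceil}\geq k$, so that $\minpf<k$ remains equivalent to the initial $\minpf=0$.

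The main technical obstacle is verifying the dualized extension lemmas: although the single arc flip preserves the size-one positive feedback structure, it inverts the sign of every cycle through $\ell_0$, so the structural arguments (especially Lemma~\ref{lem:pseudo_monotone}) must be carefully re-examined to ensure that the fixed-point combinatorics couples correctly to the satisfiability of $\psi$ in the expected dual form. Once these dualized versions are in place, the cascading reductions become essentially mechanical bookkeeping.
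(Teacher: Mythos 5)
Your overall architecture is exactly the paper's: flip the sign of the single arc $(c_1,\ell_0)$, dualize the two extension lemmas so that ``two fixed points extending $z$'' becomes ``no fixed point extending $z$'', and rerun the \emsat{}, \qsat{} and \SSAT{} reductions together with the positive-loop amplification of Lemma~\ref{lem:k_to_1}; the upper bounds are also argued as in the paper. Your $\tilde D_\psi$, $\tilde D_{\psi,s}$ and $\tilde D_\Psi$ coincide with the paper's $D^-_\psi$, $D_{\psi,s}$ and $D^-_\Psi$, and your quantitative claims ($\minpf(\tilde D_{\psi,s})=2^{n-s}-\alpha^*$, and $\minpf(\tilde D_\Psi)=0$ iff $\Psi$ is satisfiable) are the ones proved there.

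The one step where your plan, as written, would not go through is the dualization of Lemma~\ref{lem:psi_2} ``based on Lemma~\ref{lem:pseudo_monotone}''. After the sign flip, every cycle of $\tilde D_\psi$ traverses the negative arc $(c_1,\ell_0)$, so the cycles passing through some $\mu_s$ become positive but are \emph{not} full-positive; the hypothesis of Lemma~\ref{lem:pseudo_monotone} that every positive cycle is full-positive therefore fails for $\tilde D_\psi$, and this is precisely the danger point you flag without resolving. The paper never re-examines that lemma on the flipped SID. Instead, to prove Lemmas~\ref{lem:ext1_neg} and~\ref{lem:ext2_neg} it freezes $\ell_0$ at $0$ and at $1$, obtaining two acyclic networks with unique fixed points $x^0,x^1$ extending the partial fixed point $z$; every fixed point of $f$, or of the corresponding unflipped $f'$, extending $z$ must be one of these two, and each $x^a$ is a fixed point of exactly one of $f,f'$ (they differ only in $f_{\ell_0}(x^a)=\neg x^a_{c_1}$ versus $f'_{\ell_0}(x^a)=x^a_{c_1}$). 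Hence $f$ has no fixed point extending $z$ if and only if $f'$ has two, and the original Lemmas~\ref{lem:ext1} and~\ref{lem:ext2} (and through them \ref{lem:psi_2} and \ref{lem:pseudo_monotone}, applied only on the all-positive-cycle side) do all the work. Substituting this comparison argument for your ``careful re-examination'' closes the gap; the remaining cascading reductions are then correct as you describe them.
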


We first prove the upper bounds.

\begin{lemma} \label{theorem:minfpp}
Let $k\geq 1$ and $d\geq 2$ be fixed integers. 
\begin{itemize}
\item
\kminfpp{k} and \minfpp{} are in $\NEXPTIME$.
\item
When $\Delta(D) \leq d$, \kminfpp{k} is in $\NPoNP$.
\item
When $\Delta(D) \leq d$, {\minfpp} is in $\NPoSPoly$.
\end{itemize}
\end{lemma}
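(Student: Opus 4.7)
The plan is to adapt, almost verbatim, the upper-bound arguments already developed for the corresponding maximum-fixed-point problems (end of Section~\ref{section:definition}, Lemma~\ref{lem:NP}, and Lemma~\ref{lemma:mfpp-Delta_bounded_in}), simply replacing the final acceptance test ``$\phi(f)\ge k$'' by ``$\phi(f)<k$''. The underlying non-deterministic machine designs transfer without any conceptual change; what varies between the three items is only the size of the representation of a candidate BN $f$ and the resource that certifies the value of $\phi(f)$.

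For the $\NEXPTIME$ bound I would recycle the scheme from Section~\ref{section:definition}: non-deterministically guess the truth table of a BN $f$ on $\bool^{V_D}$ using $n2^n$ bits, reconstruct $D_f$ in exponential time by comparing $f_i(x)$ with $f_i(x\xor e_j)$ over all $x,i,j$, compute $\phi(f)$ by enumerating all $2^n$ configurations, and accept iff $D_f=D$ and $\phi(f)<k$. An accepting branch exists precisely when some $f\in\functions(D)$ has fewer than $k$ fixed points, so both \kminfpp{k} and \minfpp{} are in $\NEXPTIME$. For the bounded in-degree items, I would first note (as in Lemma~\ref{lemma:mfpp-Delta_bounded_in}) that each local function $f_i$ admits a description of at most $2^d$ bits; hence guessing $f$ and checking $D_f=D$ both run in non-deterministic polynomial time, and only the certification of $\phi(f)<k$ remains.

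When $k$ is a fixed constant, the predicate ``$\phi(f)\ge k$'' is itself in $\NP$ (guess $k$ pairwise distinct configurations and verify each is a fixed point of $f$), so ``$\phi(f)<k$'' lies in $\coNP$ and is resolved by a single $\NP$-oracle call whose answer is negated; this yields the $\NPoNP$ bound for \kminfpp{k}. When $k$ is part of the input, a single $\SPoly$-oracle query returns the exact value of $\phi(f)$ (the underlying $\NP$ predicate guesses a configuration and verifies it is fixed, so its accepting branches are in bijection with the fixed points of $f$), after which the comparison with $k$ is trivial; this gives the $\NPoSPoly$ bound for \minfpp{}. I do not foresee any substantial obstacle: the only point demanding care is the routine bookkeeping that, under $\Delta(D)\le d$, both guessing $f$ and reconstructing $D_f$ run in genuine polynomial time and that the oracle queries are phrased as bona fide $\NP$ or $\SPoly$ instances, all of which is already handled in the analogous maximum-fixed-point proofs.
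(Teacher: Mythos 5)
Your proposal is correct and follows essentially the same route as the paper: guess the BN $f$ (full truth table in the unbounded case, $2^d$-bit local functions in the bounded case), verify $D_f=D$, and certify $\phi(f)<k$ either by direct enumeration, by negating one $\NP$-oracle query for ``$f$ has at least $k$ fixed points'', or by one $\SPoly$-oracle query computing $\phi(f)$ exactly. No discrepancies with the paper's argument.
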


\begin{proof}
As argued in Section~\ref{section:definition}, \kminfpp{k} and \minfpp{} are in $\NEXPTIME$. We now consider the restrictions to SIDs with bounded maximum in-degree. 
	
\medskip
For \kminfpp{k}, consider the algorithm that takes as input \an{} SID $D$ with $\Delta(D)\leq d$, and proceeds as follows. 
\begin{enumerate}
\item 
Choose non-deterministically a BN $f$ with component set $V_D$ such that, for all $i\in V_D$, the local function $f_i$ only depends on components in $N(i)$; this can be done in linear time since each local function $f_i$ can be represented using $2^{|N(i)|}\leq 2^d$ bits. 
\item 
Compute the SID $D_f$ of $f$; this can be done in quadratic time since, to compute the in-neighbors of each $i\in V_D$ and the corresponding signs, we only have to consider $2^{|N(i)|}\leq 2^d$ configurations (for each configuration $x$ on $N(i)$ and $j\in N(j)$, we compare $f_i(\tilde x)$ and $f_i(\tilde x\xor e_j)$ where $\tilde x$ is any configuration on $V_D$ extending $x$). 
\item 
Decide, with a call to the $\NP$-oracle, if $f$ has at least $k$ fixed points (this decision problem is trivially in $\NP$: choose non-deterministically $k$ distinct configurations, and accept if and only if they are all fixed points).
\item 
Accept if and only if the oracle's answer is no and $D_f=D$. 
\end{enumerate}
This non-deterministic polynomial time algorithm has an accepting branch if and only if $\minpf(D)<k$. Thus, when $\Delta(D)\leq d$, \kminfpp{k} is in $\NPoNP$. 

\medskip
For \minfpp{}, the algorithm we consider is the one described in Lemma~\ref{lemma:mfpp-Delta_bounded_in}, excepted that it accepts if and only if $\phi(f)<k$ and $D_f=D$. With this modification, we obtain a non-deterministic polynomial time algorithm which calls the $\SPoly$ oracle and has an accepting branch if and only if $\minpf(D)<k$. Thus, when $\Delta(D)\leq d$, $\minfpp{}$ is in $\NPoSPoly$.
\end{proof}

For the lower bounds, we use reductions based on the construction $D_\psi$ given in Section~\ref{section:max_k>=2}. We thus use the notations of that section, and we start with an adaptation of that construction suited for the study of the minimum number of fixed points.

\begin{definition}[$D^-_\psi$, extension of $D^-_\psi$]
We denote by $D^-_\psi$ the SID obtained from the SID $D_\psi$ of Definition \ref{def:D_psi} by making negative the arc $(c_1,\ell_0)$. An {\em extension} of $D^-_\psi$ is defined as in Definition \ref{def:extension} with $D^-_\psi$ instead of $D_\psi$. 
\end{definition}

As previously, given $f\in\functions(D)$ and setting $I=V_D\setminus U_\psi$, we say that a configuration $z$ on $I$ is a {\em partial fixed point} of $f$ if $f(x)_I=x_I=z$ for some configuration $x$ on $V_D$ (and since there is no arc from $U_\psi$ to $I$, if $z$ is a partial fixed point, then $f(x)_I=z$ for {\em every} extension $x$ of $z$).

\medskip
The following lemmas are adaptations of Lemmas~\ref{lem:ext1} and \ref{lem:ext2} to the above definition. Together, they show that $\minpf(D^-_\psi)=0$ if and only if $\psi$ is satisfiable. 

\begin{lemma} \label{lem:ext1_neg}
Let $D$ be an extension of $D^-_\psi$. Let $f\in\functions(D)$ such that, for all $i\in U_\psi$, $f_i$ is the AND function if $i\in\ELL$ and the OR function otherwise. Let $z$ be a partial fixed point of~$f$. Then $f$ has at most one fixed point extending $z$ and, if $\psi$ is satisfied by $z_{\LAMBDA}$, then $f$ has no fixed point extending $z$.
\end{lemma}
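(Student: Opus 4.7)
The plan is to analyze directly the forced values of any fixed point $x$ of $f$ extending $z$, closely paralleling the proof of Lemma~\ref{lem:psi_1} while tracking the effect of the single sign flip on the arc $(c_1,\ell_0)$. Since $U_\psi$ has no outgoing arcs to $I=V_D\setminus U_\psi$ and $z$ is a partial fixed point, every fixed point $x$ extending $z$ must satisfy $x_I=z$, and in particular $x_\LAMBDA=z_\LAMBDA$, so the task reduces to determining $x_{U_\psi}$ from the specified local functions.

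The heart of the argument will be a cascade along $\ell_0,\ell_1,\dots,\ell_n$. Because $f_{\ell_r}$ is the AND function for $r\geq 1$ and the literal local functions are ORs involving $x_{\ell_{r-1}}$, the value at $\ell_{r-1}$ propagates unchanged to $\ell_r$: if $x_{\ell_{r-1}}=1$ then $x_{\lambda^+_r}=x_{\lambda^-_r}=1$ and hence $x_{\ell_r}=1$; if $x_{\ell_{r-1}}=0$ then $x_{\lambda^+_r}=z_{\lambda_r}$ and $x_{\lambda^-_r}=\neg z_{\lambda_r}$, forcing $x_{\ell_r}=0$. Thus $x_{\ell_0}=x_{\ell_1}=\dots=x_{\ell_n}$, and I would split on this common value.

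In the ``all ones'' case, every $x_{\mu_s}=1$ makes $x_{c_s}=\neg x_{\mu_s}\lor x_{c_{s+1}}$ reduce to $x_{c_s}=x_{c_{s+1}}$; since $c_{m+1}$ means $\ell_n$, this propagates to $x_{c_1}=1$, contradicting the negative-arc requirement $x_{\ell_0}=\neg x_{c_1}$. In the ``all zeros'' case, $x_{\mu_s}=1$ iff $\mu_s$ is satisfied by $z_\LAMBDA$, and the backward recursion from $x_{c_{m+1}}=0$ shows that $x_{c_1}=1$ iff $\psi$ is not satisfied by $z_\LAMBDA$; the constraint $x_{c_1}=\neg x_{\ell_0}=1$ then forces $\psi$ to be unsatisfied by $z_\LAMBDA$.

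Combining the two cases yields both conclusions at once: if $\psi$ is satisfied by $z_\LAMBDA$, both cases collapse and $f$ has no fixed point extending $z$; otherwise only the ``all zeros'' case can occur, and it pins down $x_{U_\psi}$ entirely, so at most one fixed point extends $z$. There is no real obstacle here — the argument is a routine forced propagation, and the only delicate point is verifying that the single sign flip on the arc $(c_1,\ell_0)$ is exactly what turns the ``at least one / at least two'' conclusion of Lemma~\ref{lem:ext1} into the dual ``at most one / none'' bound we need.
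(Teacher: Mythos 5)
Your proof is correct, and it takes a genuinely different route from the paper's. You argue by direct forced propagation: in any fixed point $x$ extending $z$ the values $x_{\ell_0},x_{\ell_1},\dots,x_{\ell_n}$ must coincide (since the OR functions at $\lambda^{\pm}_r$ and the AND at $\ell_r$ make $x_{\ell_r}=x_{\ell_{r-1}}$ whether $x_{\ell_{r-1}}$ is $0$ or $1$); the common value $1$ is killed by the chain $x_{c_s}=\neg x_{\mu_s}\lor x_{c_{s+1}}$ propagating $x_{c_1}=1$ into the negative arc $(c_1,\ell_0)$; and the common value $0$ forces every coordinate of $x_{U_\psi}$ (so at most one candidate) and is compatible with $x_{\ell_0}=\neg x_{c_1}$ only when some clause of $\psi$ is unsatisfied by $z_{\LAMBDA}$. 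All of these computations check out against Definition~\ref{def:D_psi}, and the reduction to the coordinates in $U_\psi$ is legitimate because $z$ being a partial fixed point already settles the coordinates in $I=V_D\setminus U_\psi$. The paper instead argues indirectly: it clamps $\ell_0$ to each constant to get networks $f^0,f^1$ with acyclic interaction digraphs and hence unique fixed points $x^0,x^1$ extending $z$, observes that every fixed point of $f$ (or of the network $f'$ obtained by flipping $(c_1,\ell_0)$ back to positive) extending $z$ lies in $\{x^0,x^1\}$ and that $f$ and $f'$ cannot share one, and then imports the ``at least one / at least two'' conclusion of Lemma~\ref{lem:ext1} for $f'$ to deduce the dual ``at most one / none'' for $f$. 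The paper's route buys reuse of the positive-arc analysis and makes the positive/negative duality transparent (the same trick recurs in Lemma~\ref{lem:ext2_neg}); your route is more elementary and self-contained, and as a by-product pins down the unique candidate fixed point explicitly.
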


\begin{proof}
Let $I=V_D\setminus U_\psi$, and let $f^0,f^1$ be the BNs with component set $V_D$ defined as follows. First, $f^0_I=f^1_I=z$.  Second, $f^0_{\ell_0}=0$, $f^1_{\ell_0}=1$. Third, $f^0_i=f^1_i=f_i$ for all $i\in U_\psi\setminus \{\ell_0\}$. Then $f^0,f^1$ have the same SID, which is the SID obtained from $D$ by removing all the arc $(j,i)$ with $i\in I\cup\{\ell_0\}$, and which is thus acyclic. Hence, $f^0$ has a unique fixed point $x^0$ and $f^1$ has a unique fixed point $x^1$. 

\medskip
Let $D'$ obtained from $D$ by making positive the arc $(c_1,\ell_0)$, so that $D'$ is an extension of $D_\psi$. Let $f'\in\functions(D')$ such that $f'_i=f_i$ for all vertices $i\neq \ell_0$. By Lemma~\ref{lem:ext1}, $f'$ has a fixed point extending $z$, and another fixed point extending $z$ if $\psi$ is satisfied by $z_{\LAMBDA}$. 

\medskip
It is clear that if $x$ is a fixed point of $f$ or $f'$ extending $z$, then
$f^0(x)=x$ if $x_{\ell_0}=0$ and $f^1(x)=x$ if $x_{\ell_0}=1$, so $x$ is one
of $x^0,x^1$. Thus, the set of fixed points of $f$ extending $z$ is included
in $\{x^0,x^1\}$, and similarly for $f'$. Furthermore, if $f'(x)=x$ then
$f(x)\neq x$ (because $f_{\ell_0}(x)=\neg x_{c_1}$ and
$f'_{\ell_0}(x)=x_{c_1}$). Since $f'$ has a fixed point extending $z$, we
deduce that $f$ has at most one fixed point extending $z$ and, since $f'$
has two fixed points extending $z$ if $\psi$ is satisfied by $z_{\LAMBDA}$,
we deduce that $f$ has no fixed point extending $z$ if $\psi$ is satisfied by
$z_{\LAMBDA}$.
\end{proof}

Given an extension $D$ of $D^-_\psi$ and $f\in\functions(D)$, we define the assignment $\epsilon(f)$ exactly as previously: for $r\in [n]$, $\epsilon(f)_{\lambda_r}=0$ if $f_{\ell_r}$ is the OR function, and $\epsilon(f)_{\lambda_r}=1$ otherwise. 

\begin{lemma} \label{lem:ext2_neg}
Let $D$ be an extension of $D^-_\psi$. Let $f\in\functions(D)$ and let $z$ be a partial fixed point of $f$. If $f$ has no fixed point extending $z$, then $\psi$ is satisfied by $z_{\LAMBDA}\xor\epsilon(f)$. 
\end{lemma}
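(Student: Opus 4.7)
The plan is to reduce Lemma~\ref{lem:ext2_neg} to the already-proved Lemma~\ref{lem:ext2} by exploiting the single-arc difference between $D^-_\psi$ and $D_\psi$. I would first construct a sibling of $f$: let $D'$ be the extension of $D_\psi$ obtained from $D$ by flipping the sign of the arc $(c_1,\ell_0)$ from negative to positive, and let $f'\in\functions(D')$ be defined by $f'_i=f_i$ for every $i\neq\ell_0$ and $f'_{\ell_0}(x)=x_{c_1}$, the unique local function at $\ell_0$ compatible with $D'$. Since $I=V_D\setminus U_\psi$ is the same for $D$ and $D'$ and $U_\psi$ has no out-arcs to $I$ in any extension, $z$ is also a partial fixed point of $f'$, and $\epsilon(f')=\epsilon(f)$ because this quantity depends only on the local functions at $\ell_1,\dots,\ell_n$, which are unchanged.

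The central step is to show that if $f$ has no fixed point extending $z$, then $f'$ has two distinct fixed points extending $z$. To this end, I would establish that there are exactly two configurations $x^0,x^1$ on $V_D$ extending $z$ that satisfy $x^b_{\ell_0}=b$ and $f_i(x^b)=x^b_i$ for every $i\in U_\psi\setminus\{\ell_0\}$. The main technical point here is the acyclicity of $D_\psi[U_\psi]\setminus\{\ell_0\}$ (all cycles of $D_\psi[U_\psi]$ pass through $\ell_0$), which guarantees existence and uniqueness of $x^b$ via forward propagation along any topological order of this DAG, seeded by the restriction $z_\LAMBDA$ and by $x^b_{\ell_0}=b$.

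Because $f$ and $f'$ agree on every component other than $\ell_0$, where $f_{\ell_0}(x)=\neg x_{c_1}$ whereas $f'_{\ell_0}(x)=x_{c_1}$, the configuration $x^b$ is a fixed point of $f$ iff $x^b_{c_1}\neq b$ and a fixed point of $f'$ iff $x^b_{c_1}=b$; exactly one of these holds. Since every fixed point of $f$ or $f'$ extending $z$ must lie in $\{x^0,x^1\}$, the hypothesis that $f$ fixes neither forces $f'$ to fix both, and $x^0\neq x^1$ because they disagree at $\ell_0$. Applying Lemma~\ref{lem:ext2} to $f'\in\functions(D')$ and $z$ then yields that $\psi$ is satisfied by $z_\LAMBDA\xor\epsilon(f')=z_\LAMBDA\xor\epsilon(f)$, as desired. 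The only delicate piece is the acyclic-propagation claim producing the two candidate configurations; everything else is a clean swap around a single arc.
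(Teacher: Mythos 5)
Your proposal is correct and follows essentially the same route as the paper's proof: the paper likewise flips the sign of $(c_1,\ell_0)$ to obtain $f'\in\functions(D')$ with $D'$ an extension of $D_\psi$, and obtains your two candidate configurations $x^0,x^1$ as the unique fixed points of auxiliary networks $f^0,f^1$ in which $f_{\ell_0}$ is replaced by the constants $0,1$ (whose SIDs are acyclic — the same fact as your forward-propagation claim), before concluding via Lemma~\ref{lem:ext2} and $\epsilon(f')=\epsilon(f)$. No gaps.
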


\begin{proof}
Let $f^0,f^1,f'$ be the BNs defined from $f$ as in the previous proof, and
let $x^0,x^1$ be the fixed points of $f^0,f^1$, respectively (which extend
$z$). For all vertices $i\neq\ell_0$ and all $a\in\{0,1\}$ we have
$f_i(x^a)=f'_i(x^a)=f^a_i(x^a)=x^a_i$. Suppose that $f$ has no fixed point
extending $z$. Then, for all $a\in\{0,1\}$ we have $\neg
x^a_{c_1}=f_{\ell_0}(x^a)\neq x^a_{\ell_0}$ and thus
$f'_{\ell_0}(x^a)=x^a_{c_1}=x^a_{\ell_0}$. Consequently, $x^0$ and $x^1$ are
distinct fixed points of $f'$ extending $z$ and, according to
Lemma~\ref{lem:ext2}, $\psi$ is satisfied by $z_{\LAMBDA}+\epsilon(f')$.
Since $\epsilon(f')=\epsilon(f)$, this proves the lemma.
\end{proof}

The following lemma shows that \kminfpp{1} is as hard as \kminfpp{k} for every $k\geq 2$. 

\begin{lemma}\label{lem:k_to_1}
Let $k\geq 2$ and let $D$ be any SID. Let $D'$ be the SID obtained from $D$ by adding $\lceil \log_2 k \rceil$ new vertices and a positive loop on each new vertex. Then 
\[
\minpf(D')<k \iff \minpf(D)=0.
\]
\end{lemma}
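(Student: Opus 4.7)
The plan is to mirror the argument used in Lemma~\ref{lem:k_to_2}, replacing $\max$ by $\min$ and adjusting the count of added vertices. Set $\ell=\lceil\log_2 k\rceil$, so $2^{\ell-1}<k\le 2^{\ell}$, and let $H$ denote the SID consisting of the $\ell$ new vertices, each carrying a positive loop and no other incoming arc. Since a vertex with a single positive in-neighbor has a unique possible local function, the set $\functions(H)$ contains only the identity map on $\bool^{V_H}$, whose fixed points are \emph{all} of $\bool^{V_H}$. Hence the unique BN in $\functions(H)$ has exactly $2^{\ell}$ fixed points, so $\minpf(H)=\maxpf(H)=2^{\ell}$.

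Next I would observe that $D'$ is the disjoint union of $D$ and $H$: the new vertices have no arc to or from $V_D$. Therefore every $f'\in\functions(D')$ decomposes uniquely as $f'=f\sqcup\mathrm{id}$ for some $f\in\functions(D)$, and a configuration $(x,y)\in\bool^{V_D}\times\bool^{V_H}$ is a fixed point of $f'$ iff $x$ is a fixed point of $f$ (any $y$ works). Consequently $\phi(f')=\phi(f)\cdot 2^{\ell}$, and taking minima over $f\in\functions(D)$ gives
\[
\minpf(D')=\minpf(D)\cdot 2^{\ell}.
\]

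From this identity both directions follow immediately. If $\minpf(D)=0$, then $\minpf(D')=0<k$. If $\minpf(D)\ge 1$, then $\minpf(D')\ge 2^{\ell}\ge k$, hence $\minpf(D')<k$ fails. This yields the claimed equivalence.

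There is no real obstacle here: the only thing to be careful about is the correct choice of the number of new vertices. In Lemma~\ref{lem:k_to_2} one wants the multiplicative factor $2^{\ell-1}$ to be strictly less than $k$ so that $\maxpf(D)\ge 2$ is necessary to reach $k$; here one instead wants the factor $2^{\ell}$ to be at least $k$ so that $\minpf(D)\ge 1$ already forces $\minpf(D')\ge k$, which is exactly why we add $\lceil\log_2 k\rceil$ (rather than $\lceil\log_2 k\rceil-1$) new vertices.
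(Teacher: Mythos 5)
Your proof is correct and follows essentially the same route as the paper's: both compute $\minpf(D')=\minpf(D)\cdot 2^{\lceil\log_2 k\rceil}$ via the disjoint-union decomposition with the identity network on the loop vertices, and then read off the equivalence. No issues.
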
 

\begin{proof}
Let $\ell=\lceil \log_2 k \rceil$ so that $k\leq 2^\ell$. Let $H$ be the SID
with $\ell$ vertices and a positive loop on each vertex. Then $\functions(H)$
contains a unique BN, which is the identity, thus $\minpf(H)=2^\ell$. Since
$D'$ is the disjoint union of $D$ and $H$, 
\[
\minpf(D')=\minpf(D)\cdot\minpf(H)=\minpf(D)\cdot 2^\ell.
\]  
Thus, $\minpf(D')=0<k$ if $\minpf(D)=0$, and $\minpf(D')\geq 2^\ell\geq k$ otherwise.
\end{proof}

We now prove that, for SIDs with a bounded maximum in-degree, \kminfpp{k} and \minfpp{} are $\NPoNP$-hard and $\NPoSPoly$-hard, respectively. The proof is very similar to that of Lemma~\ref{lemma:mfpp-Delta_bounded_hard}. The hardness of \kminfpp{k} is obtained with a reduction from the following decision problem, known to be $\NPoNP$-complete \cite[Theorem 17.10]{P94}.

\begin{quote}\decisionpb
	{\QSAT ~(\qsat)}
	{a 3-CNF formula $\psi$ over  $\LAMBDA=\{\lambda_1,\dots,\lambda_n\}$ and $s\in [n]$.}
	{does there exist $z'\in\bool^{\LAMBDA'}$, where $\LAMBDA'=\{\lambda_1,\dots,\lambda_s\}$, such that $\psi$ is satisfied by all the assignments $z\in\bool^{\LAMBDA}$ extending $z'$?}
\end{quote}

\begin{lemma} \label{lemma:minfpp-Delta_bounded_hard}
Let $k\geq 1$ and $d\geq 2$ be fixed integers.
\begin{itemize}
\item
When $\Delta(D)\leq d$, \kminfpp{k} is $\NPoNP$-hard.
\item
When $\Delta(D)\leq d$, \minfpp{} is $\NPoSPoly$-hard.
\end{itemize}
\end{lemma}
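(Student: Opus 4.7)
The plan is to adapt the reduction of Lemma~\ref{lemma:mfpp-Delta_bounded_hard} to the minimum fixed point setting, using the SID $D^-_\psi$ in place of $D_\psi$ together with the ``negative'' Lemmas~\ref{lem:ext1_neg} and~\ref{lem:ext2_neg}. For the first item, I will reduce from \qsat; for the second, from \emsat. In both cases, given an instance $(\psi,s)$ with $\LAMBDA=\{\lambda_1,\dots,\lambda_n\}$ and $\LAMBDA'=\{\lambda_1,\dots,\lambda_s\}$, I define $D^-_{\psi,s}$ as the extension of $D^-_\psi$ obtained by adding a positive loop on each vertex of $\LAMBDA\setminus\LAMBDA'$. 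Thus the partial fixed points of any $f\in\functions(D^-_{\psi,s})$ are exactly the assignments in $E(z')$, where $z'=f_{\LAMBDA'}$ and $E(z')$ is as in the proof of Lemma~\ref{lemma:mfpp-Delta_bounded_hard}.

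The key claim is the identity $\minpf(D^-_{\psi,s})=2^{n-s}-\alpha^{*}$, where $\alpha(z')$ counts the $z\in E(z')$ satisfying $\psi$ and $\alpha^{*}=\max_{z'}\alpha(z')$. For the upper bound, I pick $z'$ achieving $\alpha(z')=\alpha^{*}$ and choose $f\in\functions(D^-_{\psi,s})$ with $f_{\LAMBDA'}=z'$ and the AND/OR local functions on $U_\psi$ prescribed in Lemma~\ref{lem:ext1_neg}. For each partial fixed point $z\in E(z')$, the complementarity argument in the proof of Lemma~\ref{lem:ext1_neg} gives $\phi_f(z)=2-\phi_{f'}(z)$, where $f'$ is the twin BN obtained by flipping the arc $(c_1,\ell_0)$ back to positive. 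By Lemma~\ref{lem:ext1}, $\phi_{f'}(z)\ge 1$ always and $\phi_{f'}(z)\ge 2$ if $\psi$ is satisfied by $z_\LAMBDA$; the positive feedback bound with positive FVS $\{\ell_0\}$ forces $\phi_{f'}(z)\le 2$, so $\phi_{f'}(z)$ is exactly $2$ (resp.\ $1$) when $\psi$ is (resp.\ is not) satisfied by $z_\LAMBDA$. Summing gives $\phi(f)=2^{n-s}-\alpha^{*}$. For the lower bound, take any $f\in\functions(D^-_{\psi,s})$ and let $z'=f_{\LAMBDA'}$; by Lemma~\ref{lem:ext2_neg}, any $z\in E(z')$ with $\phi_f(z)=0$ satisfies $\psi$ after XORing with $\epsilon(f)$, hence the number of such $z$ is at most $\alpha(z'\xor\epsilon(f)_{\LAMBDA'})\le\alpha^{*}$, so $\phi(f)\ge 2^{n-s}-\alpha^{*}$.

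From this identity, the two hardness results follow. For \kminfpp{1}: $\minpf(D^-_{\psi,s})<1$ iff $\alpha^{*}=2^{n-s}$ iff $(\psi,s)$ is a yes-instance of \qsat, so \kminfpp{1} is $\NPoNP$-hard; for $k\ge 2$, Lemma~\ref{lem:k_to_1} transfers this hardness. For \minfpp, setting $k=2^{n-s-1}+1$, we have $\minpf(D^-_{\psi,s})<k$ iff $\alpha^{*}\ge 2^{n-s-1}$ iff $(\psi,s)$ is a yes-instance of \emsat, so \minfpp is $\NPoSPoly$-hard. Finally, to enforce $\Delta(D)\le d$ it suffices to apply Lemma~\ref{lem:delta_reduction} to $D^-_{\psi,s}$, exactly as in the proof of Lemma~\ref{lemma:mfpp-Delta_bounded_hard}: the only vertices of in-degree $>2$ are the clause vertices, and the $f$ constructed above uses the OR function on each of them, which is the hypothesis needed for the in-degree reduction to preserve $\minpf$. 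The most delicate step is the exact count $\phi_f(z)\in\{0,1\}$ in the upper bound: the ``at most'' part is immediate from Lemma~\ref{lem:ext1_neg}, but getting ``exactly one when $\psi$ is not satisfied by $z_\LAMBDA$'' requires combining the complementarity of $f$ and $f'$ with the precise value of $\phi_{f'}(z)$, which itself relies on pairing Lemma~\ref{lem:ext1} with the positive feedback bound.
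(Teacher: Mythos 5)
Your proposal is correct and follows essentially the same route as the paper: the same construction $D^-_{\psi,s}$ (an extension of $D^-_\psi$ with positive loops on $\LAMBDA\setminus\LAMBDA'$), the same identity $\minpf(D^-_{\psi,s})=2^{n-s}-\alpha^*$ established via Lemmas~\ref{lem:ext1_neg} and~\ref{lem:ext2_neg}, the same reductions from \qsat{} (for \kminfpp{1}, then Lemma~\ref{lem:k_to_1} for $k\geq 2$) and \emsat{} (for \minfpp), and the same use of Lemma~\ref{lem:delta_reduction} to bound the in-degree. The only differences are cosmetic: your upper bound re-derives the exact per-partial-fixed-point count via the $f/f'$ complementarity, where the paper just invokes the ``at most one, and zero if satisfied'' statement of Lemma~\ref{lem:ext1_neg} directly, and you omit the degenerate case $s=n$ where the threshold $k=2^{n-s-1}+1$ must be replaced by $k=1$.
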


\begin{proof}
For the second item, we present a reduction from \emsat{}. Let $\psi$ be a CNF formula over the set of variable $\LAMBDA=\{\lambda_1,\dots,\lambda_n\}$. Let $s\in [n]$ and $\LAMBDA'=\{\lambda_1,\dots,\lambda_s\}$. Given $z'\in\bool^{\LAMBDA'}$, we denote by $E(z')$ the $2^{n-s}$ assignments $z\in\bool^{\LAMBDA}$ extending $z'$. Then we denote by $\alpha(z')$ the number of $z\in E(z')$ satisfying $\psi$, and $\alpha^*$ is the maximum of $\alpha(z')$ for $z'\in\bool^{\LAMBDA'}$. Thus, $(\psi,s)$ is a true instance if and only if $\alpha^*\geq 2^{n-s-1}$. 
	
\medskip	
Let $D_{\psi,s}$ be the extension of $D^-_\psi$ obtained by adding, for every $i\in\LAMBDA\setminus \LAMBDA'$, a positive loop on $i$. Let us prove that:
\[
\minpf(D_{\psi,s}) = 2^{n-s} -\alpha^*.
\]

\medskip
Let $z'\in\bool^{\LAMBDA'}$ such that $\alpha(z')=\alpha^*$. Let $f\in\functions(D_{\psi,s})$ such that $f_{\LAMBDA'}=z'$ and, for all $i\in U_\psi$, $f_i$ is the AND function if $i\in\ELL$ and the OR function otherwise. Since each vertex in $ \LAMBDA\setminus \LAMBDA'$ has a positive loop and no other in-coming arc, $E(z')$ is the set of partial fixed points of $f$. Hence, by Lemma~\ref{lem:ext1_neg}, for every $z\in E(z')$, $f$ has at most one fixed point extending $z$ and, if $\psi$ is satisfied by $z$, then $f$ has no fixed point extending $z$. Since every fixed point extends a partial fixed point, we deduce that $f$ has at most $2^{n-s}-\alpha(z')$ fixed points, and thus $\minpf(D_{\psi,s}) \leq  2^{n-s} -\alpha(z')=2^{n-s}-\alpha^*$.  

\medskip
For the other direction, let any $f\in\functions(D_{\psi,s})$ and set
$\epsilon=\epsilon(f)$. Since vertices in $\LAMBDA'$ are sources,
$f_{\LAMBDA'}=z'$ for some $z'\in\bool^{\LAMBDA'}$. Since each vertex in 
$\LAMBDA\setminus \LAMBDA'$ has a positive loop and no other in-coming arc,
$E(z')$ is the set of partial fixed points of $f$. By Lemma
\ref{lem:ext2_neg}, for every $z\in E(z')$, if $\psi$ is not satisfied by
$z\xor\epsilon$, then $f$ has a fixed point extending $z$. Thus, $f$ has at
least $2^{n-s}-\alpha(z'\xor\epsilon_{\LAMBDA'})$ fixed points, and since
$\alpha(z'\xor\epsilon_{\LAMBDA'})\leq\alpha^*$, we deduce that
$\minpf(D_{\psi,s}) \geq  2^{n-s} -\alpha^*$. 

\medskip
We have $\Delta(D_{\psi,s})\leq 2n$ but, using Lemma~\ref{lem:delta_reduction}, we can obtain from $D_{\psi,s}$ \an{} SID $D'_{\psi,s}$ with $\Delta(D'_{\psi,s})\leq 2$ and at most $4n+2nm+1$ vertices such that  
\[
\minpf(D'_{\psi,s})=\minpf(D_{\psi,s})=2^{n-s}-\alpha^*.
\]
(For that we use the fact, showed above, that there is $f\in\functions(D_{\psi,s})$ with $\minpf(D_{\psi,s})$ fixed points where $f_i$ is the OR function for every vertex $i$ of in-degree three.)

\medskip
Consequently, we have $\alpha^*\geq 2^{n-s-1}$ if and only if $\minpf(D'_{\psi,s})\leq 2^{n-s-1}$. Thus, $(\psi,s)$ is a true instance of \emsat{} if and only if $(D'_{\psi,s},k)$ is a true instance of \minfpp{}, where $k= 2^{n-s-1}+1$ if $s<n$ and $k=1$ otherwise. Thus, when $\Delta(D)\leq d$, \minfpp{} is $\NPoSPoly$-hard.

\medskip
An other consequence is that $\minpf(D'_{\psi,s}) = 0$ if and only if $\alpha^*=2^{n-s}$, that is, there is a partial assignment $z'$ of the variables in $\LAMBDA'$ such that $\psi$ is satisfied by all the assignments extending $z'$. Thus, $(\psi,s)$ is a true instance of \qsat{} if and only if $D'_{\psi,s}$ is a true instance of  \kminfpp{1}, and therefore \kminfpp{1} is $\NPoNP$-hard. We then deduce from Lemma~\ref{lem:k_to_1} that, when $\Delta(D)\leq d$, \kminfpp{k} is $\NPoNP$-hard for all $k\geq 2$.
\end{proof}

It remains to prove that, in the general case, \kminfpp{k} and \minfpp{} are
$\NEXPTIME$-hard. We proceed with a reduction from \SSAT{}, which is very
similar to the one given in Section \ref{subsection:non borne} to obtain the
hardness of \maxfpp{}. We thus use the notations from that section. We consider
a succinct representation of a 3-CNF formula $\Psi$ with a set $\GLAMBDA$ of
$2^n$ variables (indexed by configurations on $W$) and with a set $\M$ of $2^m$
clauses (indexed by configurations on $U$). The following is an adaptation of
the construction $D_\Psi$ suited for the study of the minimum number of fixed
points.

\begin{definition}[$D^-_\Psi$]
We denote by $D^-_\Psi$ the SID obtained from the SID $D_\Psi$ of Definition \ref{def:D_Psi} by making negative the arc $(c_1,\ell_0)$.
\end{definition}

The main property is the following. The proof is almost identical to that of Lemma~\ref{lem:D_Psi}, using Lemmas \ref{lem:ext1_neg} and \ref{lem:ext2_neg} instead of Lemmas~\ref{lem:ext1} and \ref{lem:ext2}.

\begin{lemma}
$\minpf(D^-_{\Psi})=0$ if and only if $\Psi$ is satisfiable. 
\end{lemma}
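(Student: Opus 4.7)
The plan is to follow the template of the proof of Lemma~\ref{lem:D_Psi} essentially verbatim, substituting Lemmas~\ref{lem:ext1_neg} and \ref{lem:ext2_neg} for Lemmas~\ref{lem:ext1} and \ref{lem:ext2}. The crucial structural remark enabling this is that $D^-_\Psi$ differs from $D_\Psi$ only in the sign of the arc $(c_1,\ell_0)$, which points into $\ell_0 \in U_\psi$; hence the sub-structure $C'$ is untouched, every vertex of $V_{C'}$ still has all its in-neighbors in $V_{C'}$, and the ``projection map'' sending $f\in\functions(D^-_\Psi)$ to the BN $h'\in\functions(C')$ defined by $h'(x_{V_{C'}})=f(x)_{V_{C'}}$ is still well-defined. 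As in the proof of Lemma~\ref{lem:D_Psi}, the fixed points of $h'$ coincide with the partial fixed points of $f$.

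For the forward direction, suppose $\Psi$ is satisfiable. By Lemma~\ref{lem:pro3}(a)$\Rightarrow$(b), I would pick $h'\in\functions(C')$ such that $\psi$ is satisfied by every fixed point of $h'$. Define $f\in\functions(D^-_\Psi)$ by $f(x)_{V_{C'}}=h'(x_{V_{C'}})$ and, for each $i\in V_\Psi\setminus V_{C'}=U_\psi$, set $f_i$ to be the AND function if $i\in\ELL$ and the OR function otherwise. The set of partial fixed points of $f$ is exactly the set of fixed points of $h'$, so for every partial fixed point $z$, $\psi$ is satisfied by $z_{\LAMBDA}$. Lemma~\ref{lem:ext1_neg} then forces $f$ to have no fixed point extending $z$. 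Since every fixed point of $f$ extends a partial fixed point, $f$ has no fixed point at all, giving $\minpf(D^-_\Psi)=0$.

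For the converse, assume $\minpf(D^-_\Psi)=0$ and let $f\in\functions(D^-_\Psi)$ have no fixed points; extract $h'\in\functions(C')$ by projection. For every partial fixed point $z$ of $f$ (equivalently, every fixed point of $h'$), $f$ has no fixed point extending $z$, so Lemma~\ref{lem:ext2_neg} gives that $\psi$ is satisfied by $z_{\LAMBDA}\xor\epsilon(f)$. Let $\epsilon\in\bool^{V_{C'}}$ be any configuration with $\epsilon_{\LAMBDA}=\epsilon(f)$ (this is possible since $\LAMBDA\subseteq V_{C'}$, with $\epsilon$ arbitrary on $\{s_1,s_2\}$). Then for each fixed point $y$ of $h'$, the configuration $x=y\xor\epsilon$ is an $\epsilon$-fixed point of $h'$ satisfying $x_{\LAMBDA}=y_{\LAMBDA}\xor\epsilon(f)$, which is a satisfying assignment for $\psi$. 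Applying Lemma~\ref{lem:pro3}(c)$\Rightarrow$(a) then yields that $\Psi$ is satisfiable.

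I do not expect a serious obstacle: the two directions are completely parallel to the corresponding parts of Lemma~\ref{lem:D_Psi}, and the negation of the single arc $(c_1,\ell_0)$ is precisely what flips ``$\psi$ satisfied by $z_{\LAMBDA}\xor\epsilon(f)$ implies \emph{two} extensions'' (used in the maximum case) into ``$\psi$ satisfied by $z_{\LAMBDA}\xor\epsilon(f)$ implies \emph{zero} extensions'' (used here). The only point warranting care is the coherence between the scope of $\epsilon(f)\in\bool^{\LAMBDA}$ and the $\epsilon\in\bool^{V_{C'}}$ that Lemma~\ref{lem:pro3}(c) asks for, but this is resolved by the trivial inclusion $\LAMBDA\subseteq V_{C'}$.
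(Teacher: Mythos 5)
Your proposal is correct and follows essentially the same route as the paper's own proof: both directions instantiate the template of Lemma~\ref{lem:D_Psi} with Lemmas~\ref{lem:ext1_neg} and~\ref{lem:ext2_neg} in place of Lemmas~\ref{lem:ext1} and~\ref{lem:ext2}, and your handling of the passage from $\epsilon(f)\in\bool^{\LAMBDA}$ to an $\epsilon\in\bool^{V_{C'}}$ extending it matches the paper's argument.
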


\begin{proof}
Suppose that $\Psi$ is satisfied by an assignment $\zeta\in\bool^{\GLAMBDA}$. By Lemma~\ref{lem:pro3} there is $h'\in\functions(C')$ such that $\psi$ is satisfied by all the $2^m$ fixed points of $h'$. Let $f\in\functions(D_\Psi)$ be defined as follows. First, $f(x)_{V_{C'}}=h'(x_{V_{C'}})$ for every configuration $x$ on $V_\Psi$. Second, for every $i\in V_\Psi\setminus V_{C'}$, we define $f_i$ has the AND function if $i\in\ELL$ and the OR function otherwise (vertices in $\ELL$ are those with two in-neighbors corresponding to the positive and negative literals associated with a variable in $\LAMBDA$). By the first part of the definition, the set of fixed points of $h'$ is set of partial fixed points of $f$. So $\psi$ is satisfied by any partial fixed point $x$ of $f$, and we deduce from Lemma~\ref{lem:ext1_neg} that $f$ has no fixed point extending $x$. Since every fixed point extends a partial fixed point, we deduce that $f$ has no fixed point and thus $\minpf(D^-_{\Psi})=0$. 

\medskip
Conversely, suppose that $\maxpf(D_{\Psi})=0$. Let $f\in\functions(D_\Psi)$
without fixed point. Let $h'$ be the BN with component set $V_{C'}$ defined
by $h'(x_{V_{C'}})=f(x)_{V_{C'}}$ for all configurations $x$ on $V_\Psi$;
there is no ambiguity since vertices in $C'$ have only in-neighbors in $C'$
and, thanks to this property, the SID of $h'$ is $D_\Psi\setminus U_\psi=C'$.
Furthermore, the set of fixed points of $h'$ is the set of partial fixed
points of $f$. Let $x$ be a fixed point of $h'$. Since $f$ has no fixed point
extending $x$, by Lemma~\ref{lem:ext2_neg}, $\psi$ is satisfied by
$x_{\LAMBDA}\xor\epsilon(f)$.
This is equivalent to say that $\psi$ is satisfied by every $\epsilon$-fixed
point of $h'$, where $\epsilon$ is any configuration on $V_{C'}$ extending
$\epsilon(f)$, and we deduce from Lemma~\ref{lem:pro3} that $\Psi$ is
satisfiable.
\end{proof}

\begin{remark}
With slightly more precise arguments, we can prove that $\minpf(D^-_\Psi)=2^{m+1}-\alpha$, where $\alpha$ is the maximum number of clauses contained in $\Psi$ that can be simultaneously satisfied. 
\end{remark}

The following hardness results, the last we need to obtain, are immediate consequences.  

\begin{lemma}
\minfpp{} and \kminfpp{k} for every $k \geq 1$, are $\NEXPTIME$-hard.
\end{lemma}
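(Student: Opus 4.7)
The plan is to combine the preceding construction $D^-_\Psi$ with the gadget from Lemma~\ref{lem:k_to_1}, exactly mirroring how the $\NEXPTIME$-hardness of \maxfpp{} was obtained in Section~\ref{subsection:non borne}. The key ingredient, the equivalence $\minpf(D^-_\Psi)=0 \iff \Psi\text{ is satisfiable}$, has already been established in the previous lemma, so all that remains is a straightforward reduction from \SSAT{}, which is $\NEXPTIME$-complete.

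First I would handle the case $k=1$. Given a succinct representation $(h,C)$ of a 3-CNF formula $\Psi$, the SID $D^-_\Psi$ of Definition~\ref{def:D_Psi} (with the sign of the arc $(c_1,\ell_0)$ flipped) has at most $10|V_C|+13$ vertices and can be computed in polynomial time with respect to $|V_C|$. Since $\minpf(D^-_\Psi)<1$ is equivalent to $\minpf(D^-_\Psi)=0$, the previous lemma yields that $(h,C)$ is a true instance of \SSAT{} if and only if $D^-_\Psi$ is a true instance of \kminfpp{1}. Hence \kminfpp{1} is $\NEXPTIME$-hard, and so is \minfpp{} (any instance of \kminfpp{1} is the instance of \minfpp{} obtained by appending $k=1$).

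For $k\geq 2$, I would apply Lemma~\ref{lem:k_to_1} to $D^-_\Psi$: the SID $D'$ obtained by adjoining $\lceil\log_2 k\rceil$ new vertices each carrying a positive loop satisfies $\minpf(D')<k \iff \minpf(D^-_\Psi)=0$, and thus $\Psi$ is satisfiable if and only if $(D',k)$ is a true instance of \kminfpp{k}. The size of $D'$ remains polynomial in $|V_C|$ (since $k$ is a fixed constant, $\lceil\log_2 k\rceil$ adds only a constant number of vertices), so this is a polynomial-time reduction. Consequently \kminfpp{k} is $\NEXPTIME$-hard for every $k\geq 1$.

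There is essentially no obstacle: all the technical work has already been done in establishing the equivalence $\minpf(D^-_\Psi)=0 \iff \Psi\text{ satisfiable}$ via Lemmas~\ref{lem:ext1_neg} and~\ref{lem:ext2_neg}, and in the basic padding trick of Lemma~\ref{lem:k_to_1}. The only point requiring a brief check is that the reduction runs in polynomial (indeed linear) time in the size of the input succinct representation, which is immediate from $|V_\Psi|\leq 10|V_C|+13$.
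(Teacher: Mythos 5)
Your proposal is correct and follows essentially the same route as the paper: reduce \SSAT{} to \kminfpp{1} via the equivalence $\minpf(D^-_\Psi)=0\iff\Psi$ satisfiable from the previous lemma, then lift to $k\geq 2$ with Lemma~\ref{lem:k_to_1}, and observe that \minfpp{} inherits the hardness. The extra remarks on the linear size bound and polynomial-time computability of the reduction are sound and only make explicit what the paper leaves implicit.
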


\begin{proof}
By the previous lemma, $D^-_\Psi$ is a true instance of \kminfpp{1} if and only if $(h,C)$, the succinct representation of $\Psi$, is a true instance of \SthreeSAT{}. Therefore, \kminfpp{1} is $\NEXPTIME$-hard. We then deduce from Lemma~\ref{lem:k_to_1} that \kminfpp{k} is $\NEXPTIME$-hard for all $k\geq 2$. Consequently, \minfpp{} is also $\NEXPTIME$-hard.
\end{proof}

\section{Conclusion and perspectives}\label{sec:conclu}

In this paper, we studied the algorithmic complexity of many decision problems related to counting the number of fixed points of a BN from its SID only. Except for \kmaxfpp{1}, we proved exact complexity bounds for each one of these problems,
revealing a large range of complexities, some classes having a pretty scarce literature.

\medskip
The function problems of computing $\maxpf(D)$ or $\minpf(D)$ can, quite classically, be seen as $n$ (since the result ranges from $0$ to $2^n$, but the case $2^n$ can be treated separately) decision problems providing the bits of the answer by binary search. Table~\ref{table:classe_complexite} gives the worst case complexity of computing each of these bits in the different cases (minimum/maximum, degree bounded/unbounded). Note that, even though they do not intuitively correspond to counting problems, computing $\maxpf(D)$ or $\minpf(D)$ are proven to be $\SPoly$-hard problems even for bounded degree, from the proofs of Lemmas~\ref{lemma:mfpp-Delta_bounded_hard} and~\ref{lemma:minfpp-Delta_bounded_hard}.

\medskip
Even if the problems we studied are natural, they have not been considered
before. Indeed, most of the works on the complexity of BNs are of the form:
does a BN $f$ satisfies a given dynamical property $P$? Here, we studied
problems of the form: does \an{} SID $D$ admits a BN $f \in \functions(D)$
satisfying a given property $P$? In some cases, this new problem can be easier.
For instance, the problem of deciding if a BN (encoded as a concatenation of
local functions in conjunctive normal form) has at least one fixed point is
$\NP$-complete~\cite{K08}, but we showed that deciding if there exists a BN $f
\in\functions(D)$ with at least one fixed point is in $\Poly$. Conversely, the
problem of deciding if a BN $f$ has no fixed point is in $\coNP$, whereas it is
$\NEXPTIME$-complete to decide if there is a BN $f \in \functions(D)$ without
fixed point. 

\medskip
This new theme opens many further investigations. First, we proved that \kmaxfpp{1} is equivalent (up to a polynomial reduction) to the problem of finding an even cycle in a digraph. This problem is known to be in $\Poly$~\cite{McC04,RST99} but, to the best of our knowledge, no work has been done to show its $\mathsf C$-hardness for any complexity class $\mathsf C$. It would be interesting to find such lower bounds, because this problem is equivalent to many other decision problems of graph theory (\cite{RST99} lists several of them).

\medskip
Furthermore, we could study the effect of other restrictions on the SIDs considered. For instance, what happens for the family of SIDs with positives arcs only? It is an interesting problem, because the BN having such SIDs are monotone, and since Tarski~\cite{tarski1955lattice} monotone networks received great attention~\cite{ARS17,robert1985connection,alon1985asynchronous,aracena2004fixed,melliti2016asynchronous,just2008extremely,just2013cooperative1,just2013cooperative2}. We hope to be able to adapt our constructions in order to fit this new constraint. We conjecture that, in this case, there is an integer $k_0$ such that \kmaxfpp{k} is in $\Poly$ if $k \leq k_0$, and $\NP$-complete otherwise.

\medskip
As further variations, we could study unsigned interaction digraph, or consider
{\em automata network} instead of Boolean network (where a component can take
more than two states), or consider BNs that may ignore some arcs of the SID to
get closer to real experimental conditions (where arcs of the SID are not
always a hundred percent accurate). This could affect the problems'
difficulties drastically, and may reveal relations to \emph{network coding}
problems in information theory~\cite{li2003linear,GR11,GRF16}.

\medskip
Finally, we could think of new problems where, given \an{} SID $D$, we want to decide other properties shared by all $f \in \functions(D)$. Indeed, there are many other interesting BN properties, such as the number or the size of the limit cycles, the size of their transients\dots{} we could also look for properties of their basins of attraction, some complexity results already being known for threshold networks~\cite{floreen1993attraction}.

\medskip
As mentioned in the introduction, there are many update schedules that can be used to describe a dynamics from $f$, the most classical ones being the parallel, block-sequential, and asynchronous update schedules \cite{R86,AGMS09,R19}. With the parallel (or synchronous) update schedule, the dynamics is described by the digraph with an arc from $x$ to $f(x)$ for all configurations $x$, as in Example~\ref{ex:f}. For the other schedules, we need a notation. Let $V$ be the component set of $f$ and $I\subseteq V$. Let $f^I(x)$ be the configuration $y$ on $V$ such that $y_i=f_i(x)$ for $i\in I$, and $y_i=x_i$ for $i\not\in I$. Hence,  $f^I(x)$ is obtained from $x$ by updating in parallel components in $I$. A block-sequential update schedule  is then an ordered partition $I_1,\dots,I_k$ of $V$ and the resulting dynamics is the parallel dynamics of the BN $h=f^{I_k}\circ\dots\circ f^{I_2}\circ f^{I_1}$. Finally, the asynchronous dynamics of $f$ is described by the digraph with an arc from $x$ to $f^{\{i\}}(x)$ for all configurations $x$ and components $i$. In every case, attractors correspond to the terminal strongly connected components (and are also called limit cycles in the first two cases). In each case, the fixed points of $f$ correspond to attractors of size one and are, in this sense, invariant~\cite{GM90}. But the other attractors, and their basins, can change a lot according to the update schedule used~\cite{DNS12}. Hence, when studying dynamical properties not involving only fixed points, one has to fix the update schedule (but any deterministic dynamics can be obtained by a network updated in parallel, which can be constructed in polynomial time~\cite{R86}; hence in deterministic cases such as block-sequential, it may be more meaningful to quantify over the possible update schedules within the class under consideration, in the problem definition), and complexity results, as well as proof technics, should highly depend on the choice~\cite{ps21,chkpt20,pkch20,bgmps21}.

\section{Acknowledgments}

The authors would like to thank for their support
the \emph{Young Researcher} project ANR-18-CE40-0002-01 ``FANs'',
project ECOS-CONICYT C16E01,
and project STIC AmSud CoDANet 19-STIC-03 (Campus France 43478PD).

\bibliographystyle{plain}
\bibliography{sources}

\end{document}